\newtheorem{theorem}{Theorem}[section]
\newtheorem{corollary}[theorem]{Corollary}
\newtheorem{lemma}[theorem]{Lemma}
\newtheorem{definition}[theorem]{Definition}
\newtheorem{example}[theorem]{Example}
\newtheorem{prop}[theorem]{Proposition}
\newtheorem{remark}[theorem]{Remark}
\newtheorem{notation}[theorem]{Notation}
\newtheorem{notationR}[theorem]{Notations and Recollections}
\newtheorem{thmx}{Theorem}
\newtheorem{problem}[theorem]{Problem}
\title{Entropy, Ultralimits and the Poisson boundary\footnote{The authors acknowledge the ISF support made through grant 1483/16.}}
\author[1]{Elad Sayag\footnote{elad.sayag2003@gmail.com}}
\author[2]{Yehuda Shalom \footnote{yeshalom@tauex.tau.ac.il}}
\affil[ ]{School of Mathematical Sciences, Tel-Aviv University, Israel}
\date{}
\begin{document}

\maketitle
\begin{abstract}
    
  In this paper we introduce for a group \(G\) the notion of ultralimit of measure class preserving actions of it, and show that its
  Furstenberg-Poisson boundaries can be obtained as an ultralimit of actions on itself, when equipped with appropriately chosen measures.
  We use this result in embarking on a systematic quantitative study of the basic question how close to invariant one can find measures on a \(G\)-space, particularly for the action of the group on itself.
  As applications we show that on amenable groups there are always \enquote{almost invariant measures} with respect to the information theoretic Kullback-Leibler divergence (and more generally, any \(f\)-divergence), making use of the existence of measures with trivial boundary. More interestingly, for a free group \(F\) and a symmetric measure \(\lambda\) supported on its generators, one can compute explicitly the infimum over all measures \(\eta\) on \(F\) of the Furstenberg entropy \(h_{\lambda}(F,\eta)\). Somewhat surprisingly, while in the case of the uniform measure on the generators the value is the same as the Furstenberg entropy of the 
Furstenberg-Poisson boundary of the same measure \(\lambda\), in general it is the Furstenberg entropy of the 
Furstenberg-Poisson boundary of a measure on \(F\) \textbf{different} from \(\lambda \).



\end{abstract}

\section{Introduction}
This paper has two main goals. 
The first is to introduce the problem of minimizing the Furstenberg-entropy for group actions. We relate this problem to amenability and boundary actions.\\
The second goal is to introduce the methods of ultralimits into ergodic theory of group actions by providing a construction of the Furstenberg-Poisson boundary via ultralimits. This construction sheds new light on known properties of the Furstenberg-Poisson boundary (see in particular the ending paragraph of the introduction for a very recent use of it in the non-commutative setting).\\
We use this construction and basic properties of ultralimits to establish the connection between the problem of minimizing entropy to both amenability and boundary actions.

\subsection{Minimizing entropy problem}
Many natural actions of discrete groups \(G\curvearrowright X\), both in the topological and in the ergodic theoretic setting, admit no invariant probability measure. Basic examples to keep in mind are the action of an infinite discrete group on itself, the action of the free group \(F_d\) on the space \(\partial F_d\) of infinite reduced words\footnote{More generally, the action of a hyperbolic group on its Gromov boundary.} and the action of \(SL_{n}(\mathbb{Z})\) on \(\mathbb{P}_{\mathbb{R}}^{n-1}\) or on \(SL_n(\mathbb{R})/P(\mathbb{R})\) for any parabolic subgroup\footnote{More generally the action of any discrete Zariski-dense subgroup of a real semi-simple Lie group with no compact factors on a flag space. For the non-existence of invariant measure see  \cite{Zimmer1984ErgodicTA} or \cite{furstenberg1976note}.}.
One can ask the following informal basic question:
\[
\text{{\it How close to invariant can measures on \(X\) get?}}
\]
The notion of Furstenberg entropy, which we now recall, enables one to formalize this question. \\
A standard method of measuring distance between two measures is the Kullback–Leibler (KL) divergence (see chapter 2 in the book \cite{cover1999elements}), which is
an extremely important and well known information theoretic notion. Recall that given two probability measures \(\nu,m\) on \(X\), their KL-divergence is defined by:
\[D_{KL}(m||\nu):=\intop_{X} -\ln(\frac{d\nu}{dm})dm\]
The KL-divergence is always non-negative, and it is zero exactly when \(\nu=m\). The KL-divergence is much more sensitive than the norm distance \(||\nu-m||\) and is (defined to be) infinite if \(\nu,m\) are not in the same measure class, however it can be infinite even if they are.
With the help of this notion one can quantify the lack of invariance of a given measure on \(X\) under an action of a group \(G\). More specifically, fix a measure \(\lambda\) on \(G\) (whose 
support generates \(G\)), and consider the associated Furstenberg entropy for \textbf{any} (quasi-invariant) probability measure \(\nu\) on  \(X\):
\[h_{\lambda}(X,\nu)=h_{\lambda}(\nu):=\sum_{g\in G} \lambda(g)D_{KL}(g\nu||\nu)=\sum_{g\in G}\lambda(g)\intop_{X} -\ln(\frac{dg^{-1}\nu}{d\nu})\: d\nu\]
We emphasize that we consider here the Furstenberg entropy of \textbf{all} probability measures \(\nu\) and not necessarily 
\(\lambda\)-stationary measures (i.e., \(\lambda\ast\nu=\nu\)), as was previously done in the literature.
The Furstenberg entropy vanishes exactly for invariant measures and serves as a natural quantification for non-invariance. One can also define a similar notion for the whole
family of the well known information theoretic \enquote{distance functions} called \(f\)-divergence, which our results cover as well. \\
Our search for the most invariant measure(s) on \(X\) can now be formalized by defining:
\[I_{\lambda}(X):=\inf_{\nu\in M(X)} h_{\lambda}(X,\nu)\]
Of course, one needs to be more precise here about the class \(M(X)\) of measures \(\nu\) we take the infimum over. Typically it shall be a fixed measure class (consisting of all fully supported measures when \(X\) is discrete). In the case of a topological action, we consider the infimum to be over all the Borel measures on \(X\) and denote the result by \(I_{\lambda}^{top}(X)\). We note that in this case, if \(X\) is compact then the infimum is achieved (see the discussion in \ref{subsection Entropy on other actions}).\\
A basic fact (see Corollary \ref{Cor Group-Space-Entropy}) is that \(I_{\lambda}(X)\leq I_{\lambda}(G)\) for any \(G\)-space \(X\), which motivates our initial focus on the value of the latter. \\
These entropy-theoretic quantities give rise to many natural questions, let us state a few of them: 
\newpage
\begin{problem}
\(\) 
\begin{itemize}
    \item
    Find a method for calculating \(I_{\lambda}(X)\) or \(I_{\lambda}^{top}(X)\) in various cases (such as the examples at the beginning of the section).
    \item In the case of \(I_{\lambda}^{top}(X)\) what can be said about a minimizing measure? is it unique? (Somewhat surprisingly, one of our findings is that a minimizing measure need not be \(\lambda\)-stationary.)
    \item What can be said about actions for which which \(I_{\lambda}(X) = I_{\lambda}(G)\) or \(I_{\lambda}(X) = 0\) ?
    \item In the cases of action of lattices on flag spaces calculate \(I_{\lambda}(X)\) for the canonical measure class (e.g. the Haar measure for the maximal compact subgroup) and appropriate \(\lambda\)'s. Is the infimum attained? Is it the same as \(I_{\lambda}^{top}(X)\)? 
\end{itemize}
\end{problem}
This problem (especially the second one) arose naturally from an attempt to tackle the well known problem of dependence of the Liouville property on the finite symmetric generating measure, see the discussion in subsection \ref{subsection Possible relation with the Liouville problem}.\\
In this paper, we are able to calculate \(I_{\lambda}(G)\) for amenable groups and free groups.\\
The main tool is the new construction of the Furstenberg-Poisson boundary, Theorem \ref{Theorem E}.\\
For amenable groups we use the existence of a Liouville measure on amenable groups (see \cite{BoundaryEntropy} or \cite{Rosenblatt1981ErgodicAM}) to conclude
\begin{thmx}[See Theorem \ref{Theorem application for amenability}]\label{Theorem A}
If \(G\) is amenable and \(\lambda\) has finite  support then \(I_{\lambda}(G)=0\).\\
Namely, on any amenable group there are KL-almost invariant measures.
\end{thmx}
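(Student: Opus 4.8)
\medskip
\noindent\emph{Plan of proof.}
The strategy is to reduce to the case of a measure with trivial Poisson boundary and then quote the main theorem. Since $G$ is amenable, it carries a symmetric generating probability measure $\mu$ of finite Shannon entropy with trivial Poisson boundary (Kaimanovich--Vershik, Rosenblatt); replacing $\mu$ by $\tfrac12(\delta_e+\mu)$, which leaves the space of bounded harmonic functions unchanged, we may also assume $e\in\mathrm{supp}(\mu)$. The main theorem yields, for every generating probability measure $\eta$ on $G$ of finite entropy, the bound
\[ I_\eta(G)\ \le\ h_\eta(\Pi_\eta), \]
where $\Pi_\eta$ is the $\eta$-Poisson boundary: indeed, it presents $\Pi_\eta$ as an ultralimit of the $G$-spaces $(G,m_k)$ for suitable probability measures $m_k$ on $G$ along which the Furstenberg entropy tends to $h_\eta(\Pi_\eta)$, so that $I_\eta(G)=\inf_\nu h_\eta(G,\nu)\le\inf_k h_\eta(G,m_k)$. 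When $\eta$ has trivial boundary $\Pi_\eta$ is a point, hence $h_\eta(\Pi_\eta)=0$ and therefore $I_\eta(G)=0$; in particular $I_\mu(G)=0$.

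Next I would transfer this to the prescribed finitely supported $\lambda$. As $\mu$ is symmetric and generating with $e\in\mathrm{supp}(\mu)$, the sets $\mathrm{supp}(\mu^{*L})$ increase to $G$, so for $L$ large $\mathrm{supp}(\mu^{*L})\supseteq\mathrm{supp}(\lambda)$; since $\mathrm{supp}(\lambda)$ is finite there is then $C<\infty$ with $\lambda(g)\le C\,\mu^{*L}(g)$ for all $g$. Each summand $D_{KL}(g^{-1}\nu||\nu)$ in the definition of $h$ being non-negative, this pointwise domination gives $h_\lambda(G,\nu)\le C\,h_{\mu^{*L}}(G,\nu)$ for \emph{every} $\nu$, hence $I_\lambda(G)\le C\cdot I_{\mu^{*L}}(G)$. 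But $\mu^{*L}$ is again a generating probability measure of finite entropy with trivial Poisson boundary ($\Pi_{\mu^{*L}}=\Pi_\mu$, since $\mu$ is aperiodic), so the first paragraph applied to $\eta=\mu^{*L}$ gives $I_{\mu^{*L}}(G)=0$, and therefore $I_\lambda(G)=0$.

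One formal point remains, since the infimum defining $I_\lambda(G)$ ranges over \emph{fully supported} measures while the $m_k$ above need not be fully supported. Here one uses that $\nu\mapsto h_\lambda(G,\nu)$ is convex --- it is a non-negative combination of the maps $\nu\mapsto D_{KL}(g^{-1}\nu||\nu)$, each a jointly convex divergence precomposed with an affine map of $\nu$ --- together with the existence of a fully supported probability measure $\rho$ of finite Furstenberg entropy; one may take $\rho(x)\propto t^{|x|}$ for a word length $|\cdot|$ on $G$ and $t$ small enough that $\sum_x t^{|x|}<\infty$, so that the densities $dg^{-1}\rho/d\rho$ are bounded above and below and $h_\lambda(G,\rho)<\infty$. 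Replacing $m_k$ by $(1-\varepsilon_k)m_k+\varepsilon_k\rho$ with $\varepsilon_k\to0$ slowly keeps the Furstenberg entropy tending to $0$, by convexity, while putting the measures in the required class.

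I expect the genuine content to lie entirely in the two inputs --- the main ultralimit theorem (already proved) and the existence of trivial-boundary measures on amenable groups --- with the deduction above being essentially routine. The only step needing a real observation rather than bookkeeping is the passage from $\lambda$ to $\mu$: one cannot compare $h_\lambda$ and $h_\mu$ directly, as their supports bear no relation, and the fix is to dominate $\lambda$ pointwise by the convolution power $\mu^{*L}$, which still has trivial boundary. A minor caveat is to be sure the main theorem really propagates the Furstenberg entropy to that of the Poisson boundary along the ultralimit (rather than only semicontinuously in the unhelpful direction); should that be delicate, one would instead invoke directly whichever corollary of the form $I_\eta(G)\le h_\eta(\Pi_\eta)$ the paper records.
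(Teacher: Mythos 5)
Your proof is correct in substance, but it takes a detour that the paper's argument avoids, and the detour costs you one extra (true but nontrivial) input. The paper's proof of Theorem \ref{Theorem application for amenability} runs: take a symmetric generating Liouville measure \(\mu\) on \(G\) (Kaimanovich--Vershik), and apply Corollary \ref{Corollary RN parameters on group and boundary}, which gives, for \emph{every individual} \(g\in G\) and every continuous \(f\), the convergence \(\int_G f\big(\tfrac{dg\mu_a}{d\mu_a}\big)\,d\mu_a\to\int_{\mathcal B}f\big(\tfrac{dg\nu}{d\nu}\big)\,d\nu=f(1)=0\), because the boundary is a point. Summing over the finite support of \(\lambda\) immediately gives \(h_\lambda(G,\mu_a)\to0\); and \(\mu_a\) is automatically fully supported, so no perturbation argument is needed. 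The point you missed is that the ultralimit corollary decouples the two measures: the group element \(g\) ranges over all of \(G\), not just over \(\operatorname{supp}(\mu)\), so \(h_{\lambda}(G,\mu_a)\to h_\lambda(\mathcal B(G,\mu),\nu_\mu)\) holds for any finitely supported \(\lambda\) and any generating \(\mu\). Because you only allowed yourself the diagonal statement \(I_\eta(G)\le h_\eta(\Pi_\eta)\), you had to dominate \(\lambda\le C\mu^{*L}\) and then assert that \(\mu^{*L}\) still has trivial Poisson boundary. That assertion is true for your aperiodic \(\mu\), but it is not bookkeeping: it rests on the coincidence of the tail and Poisson boundaries for aperiodic measures (a zero--two law), an extra citation the paper never needs; and since the Kaimanovich--Vershik measure need not be finitely supported, your bound \(I_{\mu^{*L}}(G)\le h_{\mu^{*L}}(\Pi_{\mu^{*L}})\) also requires a dominated-convergence step to interchange the limit in \(a\) with the infinite sum over \(\operatorname{supp}(\mu^{*L})\) (the paper only carries this out, via finite entropy, in a remark aimed at the Kaimanovich--Vershik entropy formula). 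Both issues evaporate in the paper's version, as does your closing paragraph on restoring full support: \(\mu_a=(1-a)\sum a^n\mu^{*n}\) charges every element of \(G\) once \(\mu\) is generating.
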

It should be noted that the converse to this result is easy (and follows immediately from Pinsker's inequality-- Theorem 4.19 in \cite{Boucheron2013ConcentrationI}). 
We will deduce from Theorem \ref{Theorem A} an interesting fixed point property for amenable groups acting on a certain distinguished topological model of their Furstenberg-Poisson boundary (see Theorem \ref{Theorem fixed point for stationary actions}) -- as it turns out (see Corollary \ref{corollary RN model for Furstenberg-Poisson boundary}) every countable group admits a canonical one. This theorem was proven in \cite{FixedPointAmenable} by entirely different method. Further study of the action on this canonical model seems interesting, in particular seeking a converse to our result.\\
We now formulate our results regarding \(F_d\) -  the free group on \(d\) generators.
\begin{thmx}[See Corollary \ref{Corollary case of uniform measure}]\label{Theorem B}
Let \(\lambda\) be the symmetric uniform measure on the free generators and their inverses. Then
\[I_{\lambda}(F_d)=\frac{d-1}{d}\ln(2d-1)\]
\end{thmx}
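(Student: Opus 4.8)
The plan is to establish the two matching bounds $I_\lambda(F_d)\le\tfrac{d-1}{d}\ln(2d-1)$ and $I_\lambda(F_d)\ge\tfrac{d-1}{d}\ln(2d-1)$, both exploiting that the Cayley graph $T$ of $F_d$ with respect to $\{s_i^{\pm1}\}$ is the $2d$-regular tree. For the upper bound I would use the explicit fully supported measures $\eta_\rho(w):=Z_\rho^{-1}\rho^{|w|}$ for $0<\rho<\tfrac1{2d-1}$, where $Z_\rho=\sum_w\rho^{|w|}=1+\tfrac{2d\rho}{1-(2d-1)\rho}$. Regrouping the defining sum of $h_\lambda$ over the edges of $T$ (and using the symmetry and uniformity of $\lambda$) gives the identity
\[
h_\lambda(F_d,\eta)=\frac1{2d}\sum_{\{u,v\}\in E(T)}\bigl(\eta(u)-\eta(v)\bigr)\bigl(\ln\eta(u)-\ln\eta(v)\bigr),
\]
and since along an edge of $T$ joining levels $n$ and $n+1$ the ratio $\eta_\rho(w)/\eta_\rho(w^-)$ equals $\rho$ while there are $2d(2d-1)^n$ such edges, re-summing by level yields $h_\lambda(F_d,\eta_\rho)=\tfrac{1-\rho}{1+\rho}\ln\tfrac1\rho$. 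This is decreasing in $\rho$ on $(0,\tfrac1{2d-1})$, with infimum $\tfrac{d-1}{d}\ln(2d-1)$ attained as $\rho\nearrow\tfrac1{2d-1}$; hence $I_\lambda(F_d)\le\tfrac{d-1}{d}\ln(2d-1)$.

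The lower bound is the heart of the matter, and I would obtain it from a convexity estimate on the tree calibrated to be tight exactly at the limiting profile $\rho=\tfrac1{2d-1}$. One may assume $\eta$ is fully supported with $h_\lambda(F_d,\eta)<\infty$. Rooting $T$ at $e$ and rewriting the edge form of $h_\lambda$ by the child endpoint,
\[
h_\lambda(F_d,\eta)=\frac1{2d}\sum_{v\in V(T)}\eta(v)\sum_{w\in C_v}\psi\!\left(\frac{\eta(w)}{\eta(v)}\right),\qquad \psi(t):=(t-1)\ln t,
\]
where $C_v$ is the set of children of $v$ and $\psi\ge0$ is convex with $\psi(1)=0$. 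Now apply Jensen twice. First, inside each sibling set $C_v$ (note $|C_e|=2d$ and $|C_v|=2d-1$ for $v\ne e$): $\sum_{w\in C_v}\psi(\eta(w)/\eta(v))\ge|C_v|\,\psi\bigl(m_v/(|C_v|\eta(v))\bigr)$ with $m_v:=\sum_{w\in C_v}\eta(w)$. Second, writing $a:=\eta(e)$, $\mu_1:=\sum_{|w|=1}\eta(w)$ and $\mu_{\ge2}:=1-a-\mu_1$, apply Jensen over the vertices $v\ne e$ with weights $\eta(v)$, and then use joint convexity of the perspective function $(x,y)\mapsto y\,\psi(x/y)$ to amalgamate the resulting root term with the rest. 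This produces the single inequality
\[
2d\cdot h_\lambda(F_d,\eta)\ \ge\ \bigl((2d-1)+a\bigr)\,\psi\!\left(\frac{1-a}{(2d-1)+a}\right).
\]

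It then remains to minimize the right-hand side over $a=\eta(e)\in[0,1)$. For such $a$ the quantity $\tfrac{1-a}{(2d-1)+a}$ decreases through the interval $\bigl(0,\tfrac1{2d-1}\bigr]\subseteq(0,1)$, on which $\psi$ is decreasing; hence $\psi\bigl(\tfrac{1-a}{(2d-1)+a}\bigr)$ is increasing in $a$, and as $(2d-1)+a$ is positive and increasing the product is increasing. Therefore
\[
2d\cdot h_\lambda(F_d,\eta)\ \ge\ (2d-1)\,\psi\!\left(\tfrac1{2d-1}\right)=2(d-1)\ln(2d-1),
\]
so $h_\lambda(F_d,\eta)\ge\tfrac{d-1}{d}\ln(2d-1)$ for every $\eta$, and combined with the upper bound $I_\lambda(F_d)=\tfrac{d-1}{d}\ln(2d-1)$. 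Conceptually this common value is the Furstenberg entropy of the harmonic measure of the simple random walk on $\partial F_d$, and $\eta_\rho$ is exactly a sequence of measures on $F_d$ converging, in the ultralimit sense of the paper, to that Poisson boundary; uniqueness of the stationary measure on the boundary is what makes the estimate sharp.

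The delicate step is the calibration of the convexity argument: a single crude Jensen, or simply discarding the root contribution, loses a multiplicative constant and yields a bound strictly below $\tfrac{d-1}{d}\ln(2d-1)$. The two-stage scheme above---Jensen over each sibling set, then joint convexity of the perspective function to re-package the root term together with the remaining mass---is forced by the requirement that equality hold precisely at the non-attained profile $\eta(w)\propto(2d-1)^{-|w|}$, after which the final one-variable monotonicity check is routine. (For a general symmetric $\lambda$ supported on the generators the same strategy goes through with siblings carrying the unequal weights $\lambda(s)$; the resulting optimization is solved by a geometric/Markov profile whose boundary limit is the harmonic measure of a measure $\lambda'\ne\lambda$---the phenomenon highlighted in the abstract---and the uniform case recovers the clean formula above.)
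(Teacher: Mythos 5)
Your proof is correct, but it takes a genuinely different and more elementary route than the paper. The paper obtains the lower bound from the chain $I_{\lambda}(F_d)\ge I_{\lambda}(\partial F_d)=h_{\lambda}(\partial F_d,\nu_{\mu})$, using Corollary \ref{Cor Group-Space-Entropy} together with the first-order minimization criterion (Proposition \ref{Proposition f-entropy minimize}) applied to the harmonic measure on the boundary; the upper bound comes from the ultralimit realization of the Poisson boundary, which gives $\lim_{a\to1^{-}}h_{\lambda}(F_d,\mu_a)=h_{\lambda}(\partial F_d,\nu_{\mu})$ (Corollary \ref{Corollary RN parameters on group and boundary}). You instead work entirely on the $2d$-regular tree: your radial measures $\eta_\rho(w)\propto\rho^{|w|}$ give the upper bound by a direct computation (I checked $h_{\lambda}(\eta_\rho)=\frac{1-\rho}{1+\rho}\ln\frac{1}{\rho}$, which is correct), and your lower bound via Jensen over sibling sets followed by superadditivity of the jointly convex, $1$-homogeneous perspective function $(x,y)\mapsto y\psi(x/y)=(x-y)(\ln x-\ln y)$ is also correct: the aggregation gives exactly $2d\,h_{\lambda}(\eta)\ge\bigl((2d-1)+a\bigr)\psi\bigl(\tfrac{1-a}{(2d-1)+a}\bigr)=\bigl((2d-2)+2a\bigr)\bigl(\ln((2d-1)+a)-\ln(1-a)\bigr)$, which is visibly increasing in $a=\eta(e)$, so the minimum at $a=0$ yields $2(d-1)\ln(2d-1)$. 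What your approach buys is a self-contained, boundary-free proof of Theorem \ref{Theorem B}; what the paper's machinery buys is uniformity — the same argument handles every symmetric $\lambda$ on the generators and every $f$-divergence (Theorems \ref{Theorem C}, \ref{Theorem D}) and identifies the value conceptually as the Furstenberg entropy of a harmonic measure. Two small points to tighten: justify the rearrangement of the defining sum into the symmetric edge form (each term $\eta(w)\ln\frac{\eta(w)}{\eta(sw)}$ differs from a nonnegative quantity by the absolutely summable $\eta(sw)-\eta(w)$, so regrouping is legitimate once $h_{\lambda}(\eta)<\infty$), and note that passing from finite partial sums to the full sum in the perspective-function superadditivity uses continuity of $(x,y)\mapsto(x-y)(\ln x-\ln y)$ on $(0,\infty)^2$ together with nonnegativity of the summands.
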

In fact, we can compute \(I_{\lambda}(F_d)\) for any measure \(\lambda\) which is symmetric and supported on the free generators and their inverses:
\begin{thmx}[See Theorem \ref{Theorem entropy for F action on F}]\label{Theorem C}
Let \(\lambda\) be a symmetric measure on \(F_d\) supported on the free generators and their inverses. Then there is a symmetric measure \(\mu\), generally different from \(\lambda\), supported on the generators and their inverses, with:
\[I_{\lambda}(F_d)=h_{\lambda}(\partial F_d, \nu_{\mu})\]
where \(\nu_{\mu}\) is the the harmonic (hitting, stationary) measure for the \(\mu\)-random walk on the standard topological boundary of \(\partial F_d\).
\end{thmx}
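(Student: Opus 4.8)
\medskip
\noindent\textbf{Proof strategy.} The plan is to transport the problem to the boundary $\partial F_d$ and then solve an explicit optimization over measures there. Write $S$ for the free generators together with their inverses, so that $\mathrm{supp}(\lambda)\subseteq S$. The central reduction is the identity
\[
I_{\lambda}(F_d)\;=\;\inf_{\nu\in M(\partial F_d)}h_{\lambda}(\partial F_d,\nu).
\]
For the inequality ``$\le$'' one \emph{lifts} a measure $\nu$ on $\partial F_d$ to $F_d$: for $r\in(0,1)$ put $\eta_r:=\sum_{k\ge 0}(1-r)r^{k}\nu_k\in M(F_d)$, where $\nu_k$ assigns to each reduced word $w$ of length $k$ the mass $\nu(C_w)$ of the cylinder it determines. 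Since $|sw|=|w|\pm1$ for $s\in S$, the density $\frac{d(s^{-1}\eta_r)}{d\eta_r}(w)=\frac{\eta_r(sw)}{\eta_r(w)}$ splits into a ``radial'' factor (essentially $r^{\pm1}$, contributing $O(|\ln r|)$ to the entropy, hence negligible as $r\to1$) times a ``transverse'' factor $\frac{\nu(C_{sw})}{\nu(C_w)}$ which converges, as $|w|\to\infty$, to the boundary cocycle $\frac{d(s^{-1}\nu)}{d\nu}$; a martingale/dominated-convergence argument then yields $h_{\lambda}(F_d,\eta_r)\to h_{\lambda}(\partial F_d,\nu)$ as $r\to1$. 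For ``$\ge$'' one uses that right convolution by $\lambda$ is $G$-equivariant, $g^{-1}(\eta*\lambda)=(g^{-1}\eta)*\lambda$, so the data-processing inequality for $D_{KL}$ gives a nonincreasing sequence $h_{\lambda}(F_d,\eta*\lambda^{*n})\le h_{\lambda}(F_d,\eta)$; since $\eta*\lambda^{*n}\to\eta*\nu_{\lambda}$ weak-$*$ on the compactification $F_d\cup\partial F_d$ and Furstenberg entropy is weak-$*$ lower semicontinuous in the measure, one concludes $h_{\lambda}(F_d,\eta)\ge h_{\lambda}(\partial F_d,\eta*\nu_{\lambda})\ge\inf_{M(\partial F_d)}h_{\lambda}$. (Both bounds may instead be extracted from the ultralimit realization of the Poisson boundary obtained in the earlier sections, applied to nearest-neighbour walks on $F_d$, whose Poisson boundary is $(\partial F_d,\nu_\mu)$.)

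It remains to evaluate $\inf_{\nu\in M(\partial F_d)}h_{\lambda}(\partial F_d,\nu)$. The functional $\nu\mapsto h_{\lambda}(\partial F_d,\nu)=\sum_{s}\lambda(s)D_{KL}(s^{-1}\nu\|\nu)$ is convex and weak-$*$ lower semicontinuous on the compact set $M(\partial F_d)$, so the infimum is attained. The first step is a reduction to \emph{Markov} measures: writing $D_{KL}(s^{-1}\nu\|\nu)=\lim_k\sum_{|w|=k}\nu(C_w)\ln\frac{\nu(C_w)}{\nu(C_{sw})}$ and applying Jensen's inequality conditionally along the cylinder filtration bounds $h_{\lambda}(\partial F_d,\nu)$ from below by the same expression for the Markov measure $\widetilde\nu$ sharing the one- and two-letter cylinder masses of $\nu$; hence $\inf_{M(\partial F_d)}h_{\lambda}=\inf\{h_{\lambda}(\partial F_d,\nu):\nu\ \text{Markov}\}$. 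For a Markov measure with parameters $q_{s}=\nu(C_s)$ and $p_{st}=\nu(C_{st})/\nu(C_s)$, subject to $\sum_{s}q_s=1$ and $q_t=\sum_{s\ne t^{-1}}q_sp_{st}$, the cocycle $\frac{d(s^{-1}\nu)}{d\nu}$ depends only on the first one or two letters, and a direct computation collapses the entropy to the closed form
\[
h_{\lambda}(\partial F_d,\nu)\;=\;\sum_{s\in S}\lambda(s)\sum_{t\ne s^{-1}}\bigl(q_t-q_sp_{st}\bigr)\ln\frac{q_t}{q_sp_{st}}.
\]

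Finally one minimizes this finite-dimensional functional under the stated linear constraints, by Lagrange multipliers, and identifies the minimizer. The Euler--Lagrange system pins the optimal $(q^{*},p^{*})$ to the hitting-measure data of a symmetric nearest-neighbour random walk $\mu$ on $F_d$: writing such a $\mu$ in terms of its first-passage probabilities (the chance that the $\mu$-walk from $e$ ever reaches a prescribed neighbour) gives explicit formulae for $q^{\mu}_s,p^{\mu}_{st}$, and one checks that these solve the critical-point equations; hence the minimizer is $\nu_{\mu}$ and $I_{\lambda}(F_d)=h_{\lambda}(\partial F_d,\nu_{\mu})$. Specialising to the uniform $\lambda$ makes the optimal configuration isotropic, forces $\mu=\lambda$, and recovers the value $\tfrac{d-1}{d}\ln(2d-1)$ of Theorem~\ref{Theorem B}; for anisotropic $\lambda$ one verifies that the optimal $\mu$ is genuinely different from $\lambda$. \emph{The main obstacles} I expect are, first, controlling $h_{\lambda}(\partial F_d,\nu)$ for a general boundary measure whose Radon--Nikodym cocycle is not a cylinder function (the reduction to Markov measures), and, second, verifying that the critical point of the Lagrangian is attained by a genuine harmonic measure $\nu_\mu$ --- equivalently that the nonlinear map $\mu\mapsto\nu_\mu$ meets the solution locus of the Euler--Lagrange equations --- rather than by some merely consistent Markov measure.
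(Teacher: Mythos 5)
Your overall architecture differs genuinely from the paper's. The paper proves the lower bound by applying Corollary \ref{Cor Group-Space-Entropy} to the $G$-space $(\partial F_d,[\nu_\mu])$ and then invoking a first-order variational criterion (Proposition \ref{Proposition f-entropy minimize}) to show that $\nu_\mu$ minimizes $h_\lambda$ \emph{within its own measure class}, where $\mu=T^{-1}(\lambda)$ for an explicit bijection $T$ on the simplex of symmetric nearest-neighbour measures (Lemma \ref{Lemma mu to lambda is bijection}); the upper bound comes from the ultralimit realization of the Poisson boundary, which gives $\lim_{a\to1^-}h_\lambda(F_d,\mu_a)=h_\lambda(\partial F_d,\nu_\mu)$ (Corollary \ref{Corollary RN parameters on group and boundary}). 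Your ``$\le$'' via the lift $\eta_r$ is a reasonable elementary substitute for that machinery: the cylinder-level divergences $D_k=\sum_{|w|=k}\nu(C_w)\ln\frac{\nu(C_w)}{\nu(C_{sw})}$ increase to $D_{KL}(s^{-1}\nu\|\nu)$, so their Abel means converge and the radial factor contributes only $O(|\ln r|)$; this direction is sound for fully supported $\nu$, which suffices. Your ``$\ge$'' via convolution monotonicity plus weak-$*$ lower semicontinuity on $F_d\cup\partial F_d$ is also plausible. But note where it lands you: you must now evaluate the infimum of $h_\lambda(\partial F_d,\cdot)$ over \emph{all} Borel (equivalently, after your reduction, all Markov) measures, whereas the paper only ever needs the infimum inside the single measure class $[\nu_\mu]$, where its criterion applies. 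The paper explicitly flags the all-Borel-measures version as substantially harder and defers it to a future paper.

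There are two concrete problems with the optimization you then set up. First, the constraint $q_t=\sum_{s\ne t^{-1}}q_sp_{st}$ is wrong: the one- and two-letter cylinder data of a general boundary measure satisfies only the consistency relations $\sum_sq_s=1$ and $\sum_{t\ne s^{-1}}p_{st}=1$, not shift-stationarity, and --- worse --- the claimed minimizer $\nu_\mu$ itself violates it unless $\mu$ is uniform: for symmetric $\mu$ one has $p_{st}=v_t/(1-v_s)$, so $\sum_{s\ne t^{-1}}q_sp_{st}=v_t\sum_{s\ne t^{-1}}\frac{v_s}{1-v_s}$, which equals $v_t$ only when all the first-passage probabilities coincide. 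So for anisotropic $\lambda$ the answer $\nu_\mu$ is not even a feasible point of your Lagrangian. Second, the crux is asserted rather than proved: you would need to actually solve the Euler--Lagrange system (noting that the functional is convex only after reparametrizing by the two-cylinder masses $m_{st}=q_sp_{st}$, so that a critical point certifies a global minimum), handle degenerate Markov data in the reduction step, and, crucially, show that for \emph{every} symmetric $\lambda$ there \emph{exists} a nearest-neighbour $\mu$ whose harmonic data solves the system. That surjectivity is exactly the content of the paper's Lemma \ref{Lemma mu to lambda is bijection}, proved there by an invariance-of-domain argument; ``one checks that these solve the critical-point equations'' leaves the existence question, and hence the theorem, open.
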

An explicit formula for the measure \(\nu_{\mu}\) and a complete description of the correspondence \(\lambda\mapsto\mu\) are given in subsections \ref{subsection The Boundary of the Free group and harmonic measures} and \ref{subsection Measures on the boundary minimizing in their measure class} respectively. \(\)\\
More generally, one can use the well known information theoretic generalization \(f\)-divergence in place of the \(KL\)-divergence. Namely, we introduce
\[h_{\lambda,f}(\nu)=\sum_{g\in G} \lambda(g)D_{f}(g\nu||\nu)\]
and a corresponding \(I_{\lambda,f}\). Our results are formulated in this setting. For example, our generalization of Theorem \ref{Theorem B} states:
\begin{thmx}[See Corollary \ref{Corollary case of uniform measure}]\label{Theorem D}
Let \(\lambda\) be the symmetric uniform measure on the free generators of \(F_d\) and their inverses. Then:
\[I_{\lambda,f}(F_d)=\frac{2d-1}{2d}f(\frac{1}{2d-1})+\frac{1}{2d}f(2d-1)\]
\end{thmx}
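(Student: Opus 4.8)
The plan is to prove the stated value as a pair of matching bounds. Write $S$ for the $2d$-element set of the free generators and their inverses, so $\lambda(s)=\tfrac1{2d}$ for $s\in S$, let $|x|$ be word length, and identify $F_d$ with the vertex set of its Cayley graph, the $2d$-regular tree, rooted at the identity element $e$. For a fully supported probability measure $\eta$ on $F_d$ one has $\tfrac{dg^{-1}\eta}{d\eta}(x)=\tfrac{\eta(gx)}{\eta(x)}$, and since the ordered pairs $(x,s)\in F_d\times S$ are exactly the oriented edges of the tree, grouping an oriented edge with its reverse gives
\[
h_{\lambda,f}(\eta)=\tfrac1{2d}\sum_{s\in S}\sum_{x\in F_d}\eta(x)\,f\!\Big(\tfrac{\eta(sx)}{\eta(x)}\Big)=\tfrac1{2d}\sum_{\{x,y\}}\Psi\big(\eta(x),\eta(y)\big),\qquad \Psi(u,v):=v\,f\!\big(\tfrac uv\big)+u\,f\!\big(\tfrac vu\big),
\]
the last sum running over the edges of the tree. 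The function $\Psi$ is symmetric, jointly convex (a sum of two perspectives of the convex $f$), positively homogeneous of degree one, and satisfies $\Psi\ge 0$ with $\Psi(u,u)=0$ (it equals $v$ times the value at $u/v$ of the non-negative convex function $t\mapsto f(t)+tf(1/t)$, which vanishes at $t=1$).

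For the lower bound, say an edge has a \emph{parent} endpoint (nearer the root) and a \emph{child} endpoint; every vertex $\ne e$ is a child exactly once, the root is the parent of all its $2d$ edges, and every other vertex is the parent of $2d-1$ edges. Summing the vectors $(\eta(\mathrm{parent}),\eta(\mathrm{child}))$ over all edges thus yields first coordinate $2d\,\eta(e)+(2d-1)(1-\eta(e))=(2d-1)+a$ and second coordinate $1-a$, where $a:=\eta(e)\in[0,1)$. Being convex and degree-one homogeneous, $\Psi$ is subadditive on the positive quadrant, and the passage to the infinite edge-sum is harmless (the summands are $\ge 0$ and $\Psi$ is continuous near the limit point $((2d-1)+a,\,1-a)$, whose coordinates are positive, so one applies finite subadditivity and takes a monotone limit), giving $2d\,h_{\lambda,f}(\eta)\ge\Psi\big((2d-1)+a,\,1-a\big)$. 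It remains to minimise $a\mapsto\Psi((2d-1)+a,\,1-a)$ on $[0,1)$: with $t:=1-a$ this is the restriction of the convex $\Psi$ to the line $u+v=2d$, hence convex in $t$, and it attains its global minimum $0$ at $t=d$ (where $u=v$); since $d\ge 1$ it is non-increasing on $(0,1]$, so it is minimised at $t=1$, i.e. $a=0$. Thus $2d\,h_{\lambda,f}(\eta)\ge\Psi(2d-1,1)=(2d-1)f\big(\tfrac1{2d-1}\big)+f(2d-1)$, which is $2d$ times the claimed value, and this holds for every admissible $\eta$.

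For the upper bound I would exhibit a family saturating both inequalities above. Take $0<w<\tfrac1{2d-1}$ and let $\eta_w(x):=c_w\,w^{|x|}$ with $c_w:=\tfrac{1-(2d-1)w}{1+w}$, the constant making $\eta_w$ a probability measure. Every edge runs between levels $k$ and $k+1$ for a unique $k$, with endpoint-pair $(c_w w^k,\,c_w w^{k+1})=c_w w^k\,(1,w)$, so by homogeneity its $\Psi$-value is $c_w w^k\,\Psi(1,w)$; summing against the level-count ($2d$ edges above level $0$, and $2d(2d-1)^k$ above level $k\ge1$) collapses to a geometric series and yields, after simplification, $h_{\lambda,f}(\eta_w)=\dfrac{f(w)+w\,f(1/w)}{1+w}$. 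Letting $w\uparrow\tfrac1{2d-1}$ gives $h_{\lambda,f}(\eta_w)\to\tfrac{2d-1}{2d}f\big(\tfrac1{2d-1}\big)+\tfrac1{2d}f(2d-1)$, so $I_{\lambda,f}(F_d)$ is at most this value; with the lower bound, equality follows. (Consistently, the edge-vectors $c_w w^k(1,w)$ are all colinear, so subadditivity is sharp for $\eta_w$, and $a=c_w\to 0$.)

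The routine parts are the edge-counting and the geometric summation in the upper bound and the verification that the infinite edge-sum passes to the limit in the lower bound. The one load-bearing idea — where I expect the real work to sit — is the lower-bound packaging: recognising $2d\,h_{\lambda,f}(\eta)$ as a sum of perspectives of $f$ over the tree's edges and using degree-one homogeneity to contract that sum to the single two-variable quantity $\Psi((2d-1)+a,\,1-a)$, whose minimisation is then forced by the elementary position of the line $u+v=2d$ relative to the diagonal on which $\Psi$ vanishes. As a cross-check, the value obtained equals $D_f(s^{-1}\nu_\lambda\|\nu_\lambda)$ for the uniform harmonic (Markov) measure $\nu_\lambda$ on $\partial F_d$ (from the cylinder formula its Radon–Nikodym cocycle for a generator takes the value $2d-1$ on a set of $\nu_\lambda$-measure $\tfrac1{2d}$ and $\tfrac1{2d-1}$ elsewhere), so the statement is also the specialisation of the $f$-divergence form of Theorem \ref{Theorem entropy for F action on F} to the uniform $\lambda$, for which the associated measure $\mu$ is $\lambda$ itself.
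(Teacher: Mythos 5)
Your proof is correct, but it follows a genuinely different and considerably more elementary route than the paper. The paper obtains the lower bound by passing to the boundary: $I_{\lambda,f}(F_d)\geq I_{\lambda,f}(\partial F_d)$ (Corollary \ref{Cor Group-Space-Entropy}) together with the variational criterion of Proposition \ref{Proposition f-entropy minimize}, which identifies the harmonic measure $\nu_\mu$ as the minimizer in its class (Proposition \ref{Proposition minimizing in the measure class}); the upper bound comes from the ultralimit construction of the Poisson boundary, via Corollary \ref{Corollary RN parameters on group and boundary}, which gives $h_{\lambda,f}(F_d,\mu_a)\to h_{\lambda,f}(\partial F_d,\nu_\mu)$. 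You instead work entirely on the tree: rewriting $2d\,h_{\lambda,f}(\eta)$ as a sum over unordered edges of the symmetric perspective $\Psi(u,v)=vf(u/v)+uf(v/u)$ and contracting that sum by subadditivity (convexity plus degree-one homogeneity) to the single quantity $\Psi\bigl((2d-1)+\eta(e),\,1-\eta(e)\bigr)$, whose minimization along the line $u+v=2d$ is elementary; your upper bound uses the explicit geometric measures $\eta_w(x)=c_w w^{|x|}$ rather than the Abel sums $\mu_a$. All the steps check out: the direction of subadditivity is right, the parent/child edge count giving $((2d-1)+a,\,1-a)$ is correct, the monotone passage to the infinite edge sum is justified, and the geometric-series computation for $\eta_w$ reproduces the claimed value as $w\uparrow\frac{1}{2d-1}$. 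What your argument buys is self-containedness (no Poisson boundary, no ultralimits) and weaker hypotheses -- you only need $f$ convex with $f(1)=0$, not smooth or strictly convex as in Corollary \ref{Corollary case of uniform measure}. What it gives up is generality: the contraction to a single two-vector exploits the fact that all edges carry equal $\lambda$-weight, so it does not obviously extend to the non-uniform symmetric $\lambda$ of Theorem \ref{Theorem entropy for F action on F}, where the answer involves the harmonic measure of a \emph{different} generating measure $\mu=T^{-1}(\lambda)$; the paper's boundary/ultralimit machinery is what handles that case and explains structurally where the value comes from. Your closing cross-check against the boundary computation is accurate.
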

Another (simple, but important) ingredient in the proof of Theorems \ref{Theorem B},\ref{Theorem C},\ref{Theorem D} will be to give a general criterion\footnote{It is important to note that this criterion is only about entropy in a fixed measure class. Hence, it could not be used for calculating \(I_{\lambda,f}^{top}(X)\) (see the discussion in subsection \ref{subsection Entropy on other actions}).} (see Proposition \ref{Proposition f-entropy minimize}) for a measure \(\nu\) to satisfy \(h_{\lambda,f}(X,\nu)=I_{\lambda,f}(X)\).

\subsection{Furstenberg-Poisson boundary as an Ultralimit}
The Furstenberg-Poisson boundary \(\mathcal{B}(G,\mu)\) of a group \(G\) with a probability measure $\mu$ was first introduced by Furstenberg in the papers \cite{PoissonFormulaFurstenberg},\cite{Furstenberg1973BoundaryTA} and since then it was studied extensively (cf. \cite{BoundaryEntropy},\cite{FurstenbergGlasner},\cite{BaderShalom}, \cite{kaimanovichhyperbolicproperties}).\\ 
We remind here that the Furstenberg-Poisson boundary is a probability space \((B,\nu)\) with a measurable group action \(G\curvearrowright B\) such that the measure is stationary, e.g. \(\mu\ast\nu = \nu\). 
\(\)\\
In this paper, we give a new construction of the Furstenberg-Poisson boundary via ultralimits.
The objects of interest for us are probability spaces \((X,\nu)\) together with a measurable \(G\)-action preserving the measure class. 
\(\)\\
Our approach towards defining ultralimits of such actions will be via ultralimits of \(C^{*}\)-algebras as in  \cite{Heinrich1980UltraproductsIB} (see Remark \ref{remark ultralimits are via algebras}). More precisely, we define the ultralimit \(\mathcal{U}\lim_{\text{big}} (X_i, \nu_i)\) of \enquote{uniformly bounded-quasi-invariant} \(G\)-spaces \((X_i,\nu_i)\) (see Definition \ref{Definition ultralimit of G measure spaces}).\\
Recall that given a \(G\)-space \((X,\nu)\) it admits a minimal measure preserving factor, called
the measurable Radon-Nikodym factor (RN for short). As we observe (see Proposition \ref{Proposition RN model for RN factor}) the measurable RN-factor always admits a canonical topological model (that is, a realization on a compact metrizable space on which the group acts continuously). We denote this space by \((X,\nu)_{RN}\) -- it is the smallest model for which the Radon-Nikodym derivatives \(\frac{dg\nu}{d\nu}\) are continuous.
\(\)\\
The RN factor of our large ultralimit space is denoted by \(\mathcal{U}{\lim}_{RN} (X_i,\nu_i)\).
\(\)\\
With this notation, we can state our new construction of the Furstenberg-Poisson boundary as an ultralimit of the action of the group on itself (taken with a sequence of \enquote{Abel} measures):
\begin{thmx}[See Theorem \ref{Theorem Abel space is Furstenberg-Poisson boundary}]\label{Theorem E}
Let \(G\) be a discrete countable group, and \(\mu\) a generating measure on \(G\). Denote by \(\mu_a=(1-a)\sum_{n=0}^{\infty} a^{n}\mu^{*n}\). Fix \(\mathcal{U}\) an ultrafilter  on \((\frac{1}{2},1)\) converging to 1.\\
Then \(\mathcal{U}{\lim}_{RN} (G,\mu_a)\) is a topological model for the Furstenberg-Poisson boundary \(\mathcal{B}(G,\mu)\).
\end{thmx}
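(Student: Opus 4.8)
The plan is to show that the canonical topological RN-factor of the ultralimit $\mathcal{U}\lim(G,\mu_a)$ is $G$-equivariantly isomorphic (as a measured $G$-space with the natural limiting measure) to the Poisson–Furstenberg boundary $\mathcal{B}(G,\mu)$. I would proceed by identifying both objects with the same concrete function-space model. The key point is that the measurable RN-factor is governed precisely by the Radon–Nikodym cocycle $g\mapsto \frac{dg\nu}{d\nu}$, so I would first compute this cocycle for the $G$-space $(G,\mu_a)$: since $\mu_a = (1-a)\sum_{n\ge 0}a^n\mu^{*n}$ satisfies the resolvent identity $\mu_a = (1-a)\delta_e + a\,\mu*\mu_a$, one gets an explicit formula for $\frac{dg^{-1}\mu_a}{d\mu_a}(x) = \frac{\mu_a(gx)}{\mu_a(x)}$ in terms of the Green-type function, and as $a\to 1^-$ these ratios converge (along $\mathcal{U}$) to the Martin-kernel / Radon–Nikodym cocycle of the boundary, namely $\frac{dg\nu_\mu}{d\nu_\mu}$. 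This is the computational heart of the identification.

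Next I would set up the comparison abstractly. From the construction in subsection \ref{subsection Ultralimit for G-spaces via algebras}, the ultralimit $\mathcal{U}\lim(G,\mu_a)$ carries a distinguished $G$-invariant algebra of functions and a limiting measure, and by the general RN-factor construction recalled before the theorem, $\mathcal{U}\lim_{RN}(G,\mu_a) = (X,\nu)_{RN}$ is the smallest topological model on which all the functions $\frac{dg\nu}{d\nu}$ — equivalently, by the step above, the limiting RN-cocycles — are continuous. On the other side, the standard topological model of $\mathcal{B}(G,\mu)$ (via bounded $\mu$-harmonic functions, or the $C^*$-algebra generated by the Martin kernels) is likewise characterized as the smallest compact $G$-space carrying the Martin/RN cocycle. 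So I would show the two generating families of continuous functions coincide, hence the two $G$-$C^*$-algebras coincide, hence the Gelfand spectra are $G$-homeomorphic; matching the measures then follows since the limiting measure on $\mathcal{U}\lim(G,\mu_a)$ pushes forward to the hitting measure $\nu_\mu$ (because $\mu_a * \mu_a$-type identities degenerate, in the limit, to $\mu * \nu_\mu = \nu_\mu$).

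A cleaner route to the same end — and the one I would actually write — is to invoke the universal/maximality property: the RN-factor of any stationary-type $G$-space is dominated by the Poisson boundary, so $\mathcal{U}\lim_{RN}(G,\mu_a)$ is a factor of $\mathcal{B}(G,\mu)$; for the reverse, one exhibits every bounded $\mu$-harmonic function as arising (in the ultralimit) from the bounded functions $x\mapsto \langle$ something $\rangle$ on the spaces $(G,\mu_a)$, using that harmonic functions are exactly the fixed points of convolution by $\mu$ and that $\mu_a$-averaging converges to $\mu$-harmonic averaging as $a\to1^-$. Combining the two domination statements with the uniqueness of the RN-factor and of the Poisson boundary yields the isomorphism.

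The main obstacle I anticipate is controlling the passage to the ultralimit at the level of \emph{continuous} functions and measures simultaneously: one must check that the RN-cocycles $\frac{dg^{-1}\mu_a}{d\mu_a}$ form a uniformly bounded, equicontinuous-enough family so that their $\mathcal{U}$-limits genuinely generate a commutative $C^*$-algebra whose spectrum is the desired boundary, rather than something larger or degenerate — and that the limiting measure is not lost (e.g. does not escape to a boundary "at infinity" trivially). Handling this carefully, presumably using the martingale convergence of $\frac{dg\nu_\mu}{d\nu_\mu}$ along the random walk together with the resolvent identity to get the needed uniform estimates on $\mu_a$, is where the real work lies; the rest is formal once the ultralimit framework of subsection \ref{subsection Ultralimit for G-spaces via algebras} is in hand.
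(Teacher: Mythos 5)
Your proposal has the right general shape (stationarity of the limit measure via the resolvent identity, plus maximality of the Poisson boundary among stationary spaces), but both routes you offer leave the actual crux unproved. In your first route, the "computational heart" — that the ratios $\frac{\mu_a(gx)}{\mu_a(x)}$ converge along $\mathcal{U}$ to the Radon--Nikodym cocycle $\frac{dg\nu_\mu}{d\nu_\mu}$ of the boundary — is a Martin-kernel–type convergence statement. Nothing of the sort is available in this generality, and it is not what the ultralimit construction produces: the ultralimit is taken over the parameter $a$, its points are characters of a huge non-separable quotient of $\ell^\infty$, and the identification with the boundary has to be made measure-theoretically (matching joint distributions of the cocycles), not pointwise. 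Indeed, the distributional convergence you would need here is exactly Corollary \ref{Corollary RN parameters on group and boundary}, which in the paper is a \emph{consequence} of Theorem \ref{Theorem Abel space is Poisson boundary}, not an input. In your second route, each half is an assertion that is itself the hard part: (i) "the RN-factor of any stationary $G$-space is dominated by the Poisson boundary" is a nontrivial structure statement that you do not justify and that the paper never invokes in this form; (ii) realizing bounded $\mu$-harmonic functions inside the ultralimit runs into the problem that $H^\infty(G,\mu)$ is not an algebra, so to land in a $C^*$-quotient isomorphic to $L^\infty(\mathcal{B})$ you must show products of harmonic functions become asymptotically harmonic in $\mu_a$-measure — again essentially the whole difficulty. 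Meanwhile the obstacles you flag at the end (equicontinuity, escape of mass) are non-issues in the $C^*$-framework: states on a unital $C^*$-algebra form a weak-$*$ compact set and no equicontinuity is ever needed.

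The paper's proof circumvents all of this with one construction you are missing: for each $a$, the joining $(G\times X,\ \mu_a\times\nu)$ of Lemma \ref{Lemma diagram G times X}, where $(X,\nu)$ is the Poisson boundary. Because $\nu$ is $\mu_a$-stationary, this space is simultaneously a \emph{measure-preserving} extension of $(G,\mu_a)$ (via the first coordinate) and an extension of $(X,\nu)$ (via the action map). Taking ultralimits (Proposition \ref{Proposition universal property of abel}) yields a single $\mu$-stationary space $(C,\tau)$ which is a relatively measure-preserving extension of the Abel space $\mathcal{B}^{alg}_{\mathcal{U}}(G,\mu)$ — hence has the same RN-factor — and is also a stationary extension of $\mathcal{B}(G,\mu)$. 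One then quotes the known fact (Lemma \ref{Lemma Furstenberg Glasner universal property}) that any stationary extension of the Poisson boundary is relatively measure-preserving, so $(C,\tau)_{RN}\cong C(\mathcal{B}(G,\mu))$, using that the Poisson boundary equals its own RN-factor. Both "domination" statements thus come out of one common extension and one soft lemma; no pointwise kernel convergence and no direct approximation of harmonic functions is needed. To repair your write-up you would either have to supply this joining argument or prove the distributional convergence of the RN-cocycles independently, which is substantially harder.
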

\(\)\\
From this perspective, many classical claims on the Furstenberg-Poisson boundary become clear:\\
1. The maximality of the Furstenberg-Poisson boundary among stationary actions, proved in \cite{FurstenbergGlasner}. Actually, our proof of Theorem \ref{Theorem E} is by showing maximality for our construction (see Proposition \ref{Proposition universal property of abel}). \\
2. Kaimaniovich-Vershik basic entropy formula (see \cite{BoundaryEntropy}, Corollary \ref{corollary entropy KV using ultralimits}): \(h_{\mu}(\mathcal{B}(G,\mu)) = \lim_{n\to\infty} \frac{H(\mu^{*n})}{n}\). \\
3. Weak containment of the quasi-regular-\(L^{2}\) representation in the regular representation of the group (see \cite{weakcont}).\\
\(\)\\
A novel aspect of of Theorem \ref{Theorem E} is that one can get quantitative results on the Furstenberg-Poisson boundary relating it to the action of the group on itself (see Corollary \ref{Corollary RN parameters on group and boundary}) without any assumption on the measure \(\mu\). This is the main tool in the proof of Theorems \ref{Theorem A}-\ref{Theorem D}. We remark (see Remark \ref{remark martin boundary}) that while it may be possible to prove Corollary \ref{Corollary RN parameters on group and boundary} via other more ad-hoc methods, we believe that the tools of ultralimits could find use in various applications. The use of ultralimits in the sequel paper \cite{ultrAmenable} allow us to handle the topological case, in particular determining \(I_{\lambda,f}^{top}(\partial F_d)\).\\
\(\)\\
Finally, we note that recently the ideas around Theorem \ref{Theorem E} found another interesting application. In the paper \cite{das2022poisson} by Das and Peterson, they introduce a Poisson boundary in a non-commutative setting. They initiate entropy theory of non-commutative boundaries, and proved analogies of part of Kaimanovich-Vershik’s fundamental theorems regarding entropy \cite{BoundaryEntropy} while the rest are left open. Generalizing our approach to the non-commutative setting, S. Zhou \cite{zhou2023noncommutative} recently completed the proof of those fundamental theorems.




\section{Preliminaries}\label{section Preliminaries}

\subsection{G-spaces and Radon-Nikodym factor}\label{subsection G-spaces}

Throughout this paper, \(G\) is a discrete countable group.
\begin{definition}
\begin{itemize}
    \item
    A \emph{\textbf{QI} (quasi-invariant) \textbf{\(G\)-space}} is a probability space \(\mathcal{X}=(X,\nu)\) together with a measurable action of \(G\) on \(X\), so that \(\nu\) is quasi-invariant (i.e., \(g\nu,\nu\) are in the same measure class).
    \item
    A \textbf{\emph{factor map}} between  QI \(G\)-spaces \(\mathcal{X},\mathcal{Y}\) is a measurable mapping \(T: X\to Y\) which is \(G\)-equivariant (e.g. \(T(gx)=gT(x)\) a.e.) and with \(T_{*}\nu_X=\nu_Y\). One says \(\mathcal{Y}\) is a factor of \(\mathcal{X}\), and that \(\mathcal{X}\) is an extension of \(\mathcal{Y}\).
    \item
    We say the \(T\) is a \textbf{\emph{measure preserving extension}} if \(\frac{dg\nu_{Y}}{d\nu_{Y}}\circ T=\frac{dg\nu_{X}}{d\nu_{X}}\).
    \item
    We say \(\mathcal{X}\) is \emph{\textbf{BQI} (bounded quasi invariant)} if \(R_\nu(g)=\frac{dg\nu}{d\nu}\in L^{\infty}(\mathcal{X})\) for any \(g\in G\).\\
    In this case, a \textbf{\emph{majorant}} for \(\mathcal{X}\) is a function \(M:G\to\mathbb{R}\) such that  \(||\ln R_\nu(g)||_{\infty}\leq M(g)\).
\end{itemize}
\end{definition}
\begin{remark}
Sometimes, when we write QI-\(G\)-space, we wouldn't like to fix the probability measure on \(X\), only the measure class. This should not 
give rise to any confusion.
\end{remark}
In this paper we shall inherently need to work also with non-standard measure spaces, i.e., ones that are not countably separated. However, we will usually \enquote{reduce} the \(\sigma\)-algebra to a countably generated one. We then would like to replace the measure space with the new \(\sigma\)-algebra by a standard measure space. In general, this replacement will not be by an isomorphism in the sense that there are inverse factors, but it will be in the sense that there is an isomorphism of the \(L^{\infty}\) spaces.\\
In order to show that such isomorphism always comes from a factor map, we use the following well known (see Theorem 2.1 in \cite{RAMSAY1971253})
\begin{lemma}\label{lemma mapping in L infinity induces factor}
Suppose \(\Omega\) is a measure space, and \(A\subset L^{\infty}(\Omega)\) is a separable sub \(C^{*}\)-algebra. Consider the Gelfand dual \(X\) of \(A\). Then there is a measurable mapping \(\pi:\Omega\to X\) that induces the embedding \(A\to L^{\infty}(\Omega)\). Such a mapping \(\pi\) is unique up to equality a.e.\\
If \(\Omega\) is a QI \(G\)-space and \(A\subset L^{\infty}(\Omega)\) is \(G\)-invariant, then \(\pi\) is a \(G\)-factor.
\end{lemma}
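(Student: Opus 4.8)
The plan is to reduce the statement to the classical Gelfand–Naimark correspondence together with a standard disintegration/regularity argument, and then check $G$-equivariance by uniqueness. First I would treat the ``mere measure space'' half. Since $A$ is a separable $C^{*}$-algebra, its Gelfand dual $X=\widehat{A}$ is a compact metrizable space, and the Gelfand transform identifies $A$ with $C(X)$. The inclusion $A\hookrightarrow L^{\infty}(\Omega)$ then gives, after composing with the conditional-expectation-type embedding $L^{\infty}(\Omega)\hookrightarrow L^{1}(\Omega)$ (using that $\Omega$ is a probability, or at least $\sigma$-finite, measure space so that $1\in L^{1}$), a positive unital linear functional $\varphi\colon C(X)\to\mathbb{C}$, $\varphi(f)=\int_{\Omega} f\,d\nu$. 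By Riesz representation this is integration against a Radon probability measure $\nu_X$ on $X$. The key point is then to upgrade the \emph{algebra} embedding $C(X)\hookrightarrow L^{\infty}(\Omega)$ to a \emph{point map} $\pi\colon\Omega\to X$ with $\pi^{*}=$ that embedding and $\pi_{*}\nu=\nu_X$; this is exactly Theorem 2.1 of \cite{RAMSAY1971253}, and concretely it is produced by choosing a countable dense $\mathbb{Q}$-subalgebra $\{f_n\}$ of $C(X)$, picking Borel representatives of their images in $L^{\infty}(\Omega)$, and sending $\omega\in\Omega$ to the unique point of $X$ on which $f_n$ takes the value (representative of $f_n$)$(\omega)$ for all $n$ — this is well-defined off a null set because the $f_n$ separate points of $X$ and because the algebraic relations among the $f_n$ hold pointwise a.e. after discarding countably many null sets.

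For uniqueness: if $\pi,\pi'$ both induce the same embedding, then $f\circ\pi=f\circ\pi'$ a.e.\ for every $f$ in a countable separating family, hence (off the union of the corresponding null sets) $\pi(\omega)$ and $\pi'(\omega)$ are not separated by any $f_n$, so $\pi=\pi'$ a.e.

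It remains to handle $G$-equivariance under the hypothesis that $A\subseteq L^{\infty}(\Omega)$ is $G$-invariant. Each $g\in G$ acts on $L^{\infty}(\Omega)$ by $f\mapsto g\cdot f$ (pullback by $\omega\mapsto g^{-1}\omega$), and by assumption this preserves $A$, hence induces via Gelfand duality a homeomorphism $\sigma_g\colon X\to X$; one checks $\sigma_{gh}=\sigma_g\sigma_h$ from functoriality, so $X$ is a topological $G$-space. Now both $\pi\circ(g\cdot)$ and $\sigma_g\circ\pi$ are measurable maps $\Omega\to X$ inducing the \emph{same} embedding $A\to L^{\infty}(\Omega)$ (namely $f\mapsto g\cdot(f\circ\pi)=(\sigma_g f)\circ\pi$), so by the uniqueness clause they agree a.e.; thus $\pi$ is $G$-equivariant a.e. Finally $\pi_{*}\nu=\nu_X$ by construction, and quasi-invariance of $\nu_X$ (together with the factor property) follows because $\pi$ intertwines the actions and pushes $\nu$ to $\nu_X$. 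I expect the only genuinely delicate point to be the measurability and almost-everywhere well-definedness of the point map $\pi$ in the non-standard (not countably separated) setting — i.e.\ making sure the ``simultaneous evaluation'' construction produces a genuinely measurable $\Omega\to X$ — but this is precisely what is packaged in the cited result \cite{RAMSAY1971253}, so in the write-up I would invoke it for that half and supply only the short equivariance-by-uniqueness argument in detail.
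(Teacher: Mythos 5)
Your proposal is correct and takes essentially the same route as the paper, which states this lemma without proof as a well-known consequence of Theorem 2.1 in \cite{RAMSAY1971253}; you invoke that same result for the existence and measurability of the point map and supply the standard countable-separating-family argument for uniqueness together with the equivariance-by-uniqueness argument for the $G$-factor claim. The additional detail you sketch (character extension off a countable union of null sets) is exactly what the cited theorem packages, so nothing is missing.
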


\begin{definition}
A topological model for a \(QI\) \(G\)-space \(\mathcal{X}\) is a \(QI\) \(G\)-space \(\mathcal{Y}=(Y,\nu)\) together with a factor \(\pi: \mathcal{X}\to\mathcal{Y}\) such that:
\begin{enumerate}
    \item  \(Y\) is a compact metrizable (equivalently compact Hausdorff and second countable) topological space with a continuous \(G\)-action, the \(\sigma\)-algebra on \(\mathcal{Y}\) is the Borel \(\sigma\)-algebra.
    \item \(\pi\) induces an isomorphism \(L^{\infty}(\mathcal{Y})\cong L^{\infty}(\mathcal{X})\)
\end{enumerate}
\end{definition}
From Lemma \ref{lemma mapping in L infinity induces factor} we see that if \(L^{\infty}(\mathcal{X})\) is separable with respect to the weak topology (that is, the \(\sigma\)-algebra is generated by countably many sets) then \(\mathcal{X}\) posses a topological model. Moreover, all of them are isomorphic when considered as \(QI\)-\(G\)-spaces factors of \(\mathcal{X}\).\\
\(\)\\
Let us recall the definition of Radon-Nikodym factor, which using Lemma \ref{lemma mapping in L infinity induces factor} is easily seen to agree with the definition given in the introduction
\begin{definition}
Let \(\mathcal{X}=(X,\Sigma,\nu)\) be a QI \(G\)-space. Its Radon-Nikodym factor is defined to be the QI \(G\)-space which is a topological model of \((X,\Sigma_{RN},\nu)\) where \(\Sigma_{RN}\) is the \(\sigma\)-algebra generated by \(\{\frac{dg\nu}{d\nu}\big| g\in G\}\).
\end{definition}
The next definition and proposition formalizes the observation that the classical Radon-Nikodym factor has a canonical topological model. This is easily seen from the perspective of \(C^*\)-algebras.

\begin{definition}
A \textbf{\emph{RN model}} of a BQI \(G\)-space \(\mathcal{X}\) is a compact metrizable space \(X\) with a Borel probability measure \(\nu\) and a continuous \(G\)-action on \(X\) such that:
\begin{itemize}
    \item \((X,\nu)\) is isomorphic to \(\mathcal{X}\) as BQI \(G\)-spaces.
    \item For all \(g\in G\) one has \(\frac{dg\nu}{d\nu}\in C(X)\).
\end{itemize}
\end{definition}

\begin{prop}\label{Proposition RN model for RN factor}
Let \(\mathcal{X}\) be a BQI \(G\)-\(C^*\)-space and consider its (measurable) RN factor \(\mathcal{X}_{RN}\). Then it has a topological model which is a RN model. In fact, it has a canonical such model \(X_{RN}\) which is a continuous factor of all other RN models.
\end{prop}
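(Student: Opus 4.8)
The plan is to realize \(X_{RN}\) as the Gelfand dual of the smallest \(G\)-invariant unital \(C^{*}\)-subalgebra of \(L^{\infty}(\mathcal X)\) generated by the Radon--Nikodym cocycle, and then to verify the three assertions by feeding this algebra into Lemma \ref{lemma mapping in L infinity induces factor}. Write \(R(g)=\frac{dg\nu}{d\nu}\in L^{\infty}(\mathcal X)\) (using that \(\mathcal X\) is BQI), and record the cocycle identity \(R(gh)=(g\cdot R(h))\,R(g)\), where \((g\cdot f)(x)=f(g^{-1}x)\); in particular \(R(g)^{-1}=g\cdot R(g^{-1})\in L^{\infty}(\mathcal X)\) by BQI applied to \(g^{-1}\). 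Let \(A\subseteq L^{\infty}(\mathcal X)\) be the norm-closed \(*\)-subalgebra generated by \(\{R(g),R(g)^{-1}:g\in G\}\). It is unital (\(R(e)=1\)) and separable since \(G\) is countable; rewriting the cocycle identity as \(g\cdot R(h)=R(gh)R(g)^{-1}\) shows \(A\) is invariant under the (isometric) \(G\)-action on \(L^{\infty}(\mathcal X)\). Hence its Gelfand dual \(X_{RN}:=\widehat A\) is a compact metrizable space carrying a continuous \(G\)-action dual to that on \(A\).

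Next I would apply Lemma \ref{lemma mapping in L infinity induces factor} to \(A\subseteq L^{\infty}(\mathcal X)\): it provides a \(G\)-equivariant measurable map \(\pi:X\to X_{RN}\) inducing the inclusion \(A\hookrightarrow L^{\infty}(\mathcal X)\), and I set \(\nu_{RN}:=\pi_{*}\nu\), a quasi-invariant Borel measure on \(X_{RN}\). Since the inclusion \(A\hookrightarrow L^{\infty}(\mathcal X)\) is isometric and factors through \(L^{\infty}(X_{RN},\nu_{RN})\), no nonzero element of \(C(X_{RN})=A\) can vanish \(\nu_{RN}\)-a.e., so \(\nu_{RN}\) has full support and \(A\hookrightarrow L^{\infty}(X_{RN},\nu_{RN})\) is isometric. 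The image of \(L^{\infty}(X_{RN},\nu_{RN})\) inside \(L^{\infty}(\mathcal X)\) is the von Neumann algebra generated by \(A\), which — since \(\Sigma_{RN}\) is by definition the \(\sigma\)-algebra generated by the \(R(g)\) — equals \(L^{\infty}(\mathcal X_{RN})\). Thus \((X_{RN},\nu_{RN})\) with its Borel \(\sigma\)-algebra is a topological model of the measurable RN factor \(\mathcal X_{RN}\). It is moreover an RN model: transporting Radon--Nikodym derivatives along the factor \(\mathcal X_{RN}\to(X_{RN},\nu_{RN})\) (which is measure preserving and induces an \(L^{\infty}\)-isomorphism) shows \(\frac{dg\nu_{RN}}{d\nu_{RN}}\) agrees \(\nu_{RN}\)-a.e.\ with the Gelfand transform of \(R(g)\in A=C(X_{RN})\), and full support identifies it with that continuous function.

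For the canonicity statement, let \((Y,\eta)\) be any RN model (equivalently, any topological model on which every \(\frac{dg\eta}{d\eta}\) is continuous). Inside \(C(Y)\), the unital \(C^{*}\)-subalgebra \(B\) generated by \(\{\frac{dg\eta}{d\eta},(\frac{dg\eta}{d\eta})^{-1}:g\in G\}\) is \(G\)-invariant by the same cocycle computation, and the \(L^{\infty}\)-isomorphism \(L^{\infty}(Y,\eta)\cong L^{\infty}(\mathcal X_{RN})\) carries \(B\) isometrically onto \(A\). Dualizing the inclusion \(B\hookrightarrow C(Y)\) yields a continuous \(G\)-equivariant surjection \(Y\to\widehat B=X_{RN}\); it pushes \(\eta\) to \(\nu_{RN}\) because both measures induce the same state on \(A\), hence it is a factor map. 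Uniqueness of \(X_{RN}\) as the universal RN model then follows in the usual way: two such are mutually continuous factors, and a continuous self-map of \(X_{RN}\) inducing the identity on \(C(X_{RN})\) must be the identity since \(\nu_{RN}\) has full support.

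The main point requiring genuine care is the identification \(L^{\infty}(X_{RN},\nu_{RN})\cong L^{\infty}(\mathcal X_{RN})\): one must reconcile the Gelfand/weak-\(*\) description of the \(C^{*}\)-algebra generated by the \(R(g)\) with the definition of \(\Sigma_{RN}\), and use the full-support observation to upgrade the \(L^{\infty}\)-map coming from Lemma \ref{lemma mapping in L infinity induces factor} to an isomorphism. Everything else reduces to the cocycle identity together with that lemma.
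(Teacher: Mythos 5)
Your construction is essentially the paper's own proof: take the $G$-invariant separable $C^{*}$-subalgebra of $L^{\infty}(\mathcal X)$ generated by the Radon--Nikodym cocycle, let $X_{RN}$ be its Gelfand dual, produce the factor map via Lemma \ref{lemma mapping in L infinity induces factor}, and obtain universality by observing that any RN model $Y$ has $A_{RN}\subset C(Y)$ and dualizing that inclusion. The only cosmetic difference is that you adjoin the inverses $R(g)^{-1}$ explicitly (they already lie in $A_{RN}$ by spectral permanence, since BQI bounds the spectrum away from $0$) and you spell out the full-support and $L^{\infty}$-identification steps that the paper leaves implicit.
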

\begin{proof}
Consider the closure of the algebra generated by all Radon-Nikodym derivatives \(A_{RN}:=C^{*}\big(R_{\nu}(g)=\frac{dg\nu}{d\nu}\big|\:g\in G\big)\subset L^{\infty}(\mathcal{X})\). Note that \(A_{RN}\) is separable and it is \(G\)-invariant since \(g\cdot R_\nu(h)=\frac{R_\nu(gh)}{R_\nu(g)}\). Let \(X_{RN}\) be its Gelfand dual. The space \(X_{RN}\) is a RN-model. Namely, it is a compact metrizable space with a continuous action of \(G\) and a BQI Borel measure \(\nu\) such that \(\frac{dg\nu}{d\nu}\in C(X_{RN})\cong A_{RN}\) for all \(g\in G\).\\
By Lemma \ref{lemma mapping in L infinity induces factor} applied to \(A_{RN}\subset L^{\infty}(\mathcal{X}_{RN})\) we obtain a factor of \(G\)-spaces \(\pi: \mathcal{X}\to X_{RN}\). Via the map \(\pi\), the space \(X_{RN}\) is a topological model of \(\mathcal{X}_{RN}\).\\
To show the universality of \(X_{RN}\), let \(Y\) be a topological model of \(\mathcal{X}_{RN}\) which is a RN-model. Consider the inclusions \(C(Y)\subset L^{\infty}(Y)\cong L^{\infty}(\mathcal{X}_{RN})\subset L^{\infty}(X)\). As for every \(g\in G\) we have \(\frac{dg\nu}{d\nu}\in C(Y)\), we conclude \(A_{RN}\subset C(Y)\). This embedding induces a continuous factor \(Y\to X_{RN}\).
\end{proof}
\begin{remark}
Note that \(X_{RN}\) has the property that the RN-cocyle separates the points of \(X\). This characterizes \(X_{RN}\) among the RN models.
\end{remark}

We finish this section with the following observation regarding the Furstenberg-Poisson boundary.
\begin{corollary}\label{corollary RN model for Furstenberg-Poisson boundary}
Suppose \(G\) be a discrete countable group. Then any BQI \(G\)-space which is equal to its (measurable) RN-factor has a topological model which is RN and minimal among such topological models.\\
In particular, this applies to the Furstenberg-Poisson boundary, hence in particular the latter admits a \enquote{canonical} topological model.   \(\mathcal{B}(G,\mu)\) for a generating measure \(\mu\).
\end{corollary}
\begin{proof}
The first part is clear by Proposition \ref{Proposition RN model for RN factor}. The second part is by noting that \(\mathcal{B}(G,\mu)\) is its own measurable Radon-Nikodym factor. Indeed, the linear span of the RN cocyle is dense in \(L^{1}\), by lemma 2.2 in \cite{5remarksPoisson} (This lemma is a trivial application of Hahn-Banach and the fact that Poisson integral is an isomorphism on the Furstenberg-Poisson boundary).
\end{proof}
The interest in this model of the Furstenberg-Poisson boundary is illustrated in Theorem \ref{Theorem fixed point for stationary actions}, as it is shown that for amenable groups, and symmetric \(\mu\), there is always a unique fixed point there.

\subsection{G-spaces in the language of 
C*-algebras}
\subsubsection{C*-algebras and probability spaces}\label{subsubsection C*-algebras and probability spaces}
A basic tool in our work will be commutative \(C^*\) algebras. We review some basic notions from this theory that arise from translating the relevant concepts of measure theory to algebra.
\begin{notationR}
Let \(A\) be a commutative \(C^*\) algebra with unit 1. 
\begin{itemize}
    \item
    \(A_+\) denotes the \emph{positive elements} of \(A\).
    \item
    A \emph{positive functional/measure} \(\nu\) is an element \(\nu\in A^*\) with \(\nu(A_+)\subseteq \mathbb{R}_{\geq 0}\).
    \item
    If a measure \(\nu\) satisfies in addition that \(\nu(1)=1\) we say \(\nu\) is a \emph{state/probability measure}.
    \item
    We say that a measure \(\nu\) is \emph{faithful} if: \(\forall a\in A_+:\:\: \nu(a)=0\implies a=0\).
    \item
    Consider the action of \(A\) on \(A^*\) by \((a\cdot \nu)(b)=\nu(ba)\).\\
    If \(a\in A_+\) and \(\nu\) is a measure then so is \(a\cdot \nu\). \\
    If \(\nu\) is a faithful measure then we have \(a\in A,\: a\cdot \nu=0\implies a=0\).
    \item
    For a general measure \(\nu\) we have \(\mathcal{N}_\nu:=\big\{a\in A\big| \nu(a^*a)=0\big\}=\big\{a\in A \big|a\cdot \nu=0 \big\}\). Indeed this follows from Cauchy-Schwartz inequality for \(\langle a,b \rangle_\nu=\nu(b^*a)\).
    \item
    \(\mathcal{N}_{\nu}\) is an ideal of \(A\) and \(\nu\) induces a measure \(\overline{\nu}\) on \(\frac{A}{\mathcal{N}_\nu}\) which is faithful.\\
    We call \((\frac{A}{\mathcal{N}_\nu},\overline{\nu})\) the \textbf{\emph{reduced version}} of \((A,\nu)\) and denoted it by \((A,\nu)_{red}\). 
    \item
    We say that a measure \(m\) is \textbf{\emph{bounded-absolutely-continuous}} with respect to a measure \(\nu\) and denote it by \(m\ll^b\nu\) iff there is \(c\in A_+\) with \(m=c\cdot \nu\). \\
    In this case we have that \(\mathcal{N}_\nu\subset \mathcal{N}_m\) and hence \(m\) defines a measure on \(\frac{A}{\mathcal{N}_\nu}\).
    \item
    If \(\nu\) is faithful and \(m\ll^b\nu\) then there exists a unique \(c\in A_{+}\) with \(m=c\cdot\nu\). This element \(c\) is called the \textbf{\emph{Radon-Nikodym derivative}}. It is denoted by \(c=\frac{dm}{d\nu}\).\\
    If in addition we have \(\nu\ll^b m\) then \(m\) is faithful and \(\frac{d\nu}{dm}=(\frac{dm}{d\nu})^{-1}\).
\end{itemize}
\end{notationR}
Now we define the notion of probability space in the realm of \(C^*\)-algebras
\begin{definition}\label{definition C probability space}
\(\)
\begin{itemize}
    \item A \textbf{\emph{\(C^*\)-probability space}} is a pair \((A,\nu)\) of a unital commutative \(C^*\) algebra together with a faithful state \(\nu\).
    \item
    A \textbf{\emph{factor map}} \(T:\mathcal{B}\to\mathcal{A}\) between \(C^*\)-probability spaces \(\mathcal{B}=(B,m)\) and \(\mathcal{A}=(A,\nu)\) is a \(*\)-homomorphism \(T:B\to A\) such that \(T^*\nu=m\). We say that \(\mathcal{B}\) is a factor of \(\mathcal{A}\) and \(\mathcal{A}\) is an extension of \(\mathcal{B}\). 
\end{itemize}
\end{definition}
Note that in the definition of a factor, \(T\) is automatically an injection and thus an isometry.

\subsubsection{G-C*-spaces}\label{subsubsection G-C*-spaces}
Let \(A\) be a \(C^*\) algebra. By an \textbf{action} of \(G\) on \(A\) we will mean a left action such that for any \(g\in G\), the mapping \(a\mapsto g\cdot a={}^ga\) is a \(C^*\)-algebra isomorphism.
Such action induces an action \(G\curvearrowright A^{*}\) by \({}^g\nu=g\cdot\nu=(g^{-1})^*\nu\). The set of states is \(G\)-invariant.\\
In addition, any  finite measure \(\mu\) on \(G\) acts on \(A\) by: \(\mu\ast a=\sum_{g} \mu(g)\cdot {}^ga\). Similarly \(\mu\) acts on \(A^{*}\).\\
We now define the notion of \(G\)-space in the context of \(C^*\)-algebras
\begin{definition}\label{definition BQI G C space}
A \textbf{\emph{BQI \(G\)-\(C^*\)-space}} is a \(C^*\)-probability space \(\mathcal{A}=(A,\nu)\) together with an action of \(G\) on \(A\) such that for any \(g\in G\) we have \({}^g\nu\ll^{b} \nu\).\\
We will denote \(R_{\mathcal{A}}(g)=R_\nu(g):=\frac{dg\nu}{d\nu}\), the \textbf{\emph{Radon-Nikodym cocyle}}.\\
A \textbf{\emph{majorant}} for \(\mathcal{A}\) is a function \(M:G\to\mathbb{R}\) such that  \(||\ln(R_\nu(g))||\leq M(g)\).
\end{definition}    
\begin{remark}
It is easy to see that \(R_{\nu}(gh)={}^gR_\nu(h)\cdot R_\nu(g)\), so \(R_\nu:G\to A^{\times}\) is indeed a cocyle.
\end{remark}

\begin{remark}
Note that it is not always true that for any probability measure \(\mu\) on \(G\) we have \(\mu\ast\nu\ll^{b} \nu\). However, the following is true (and trivial):\\
Suppose \(G\) is a group that acts on a \(C^*\)-algebra \(A\) and \(\nu\) is a state on \(A\) such that \(\mathcal{N}_\nu\) is \(G\)-invariant. Then for any  complex finite measure \(\mu\) on \(G\) we have \((\mu\ast\nu)(\mathcal{N}_\nu)=0\) and thus \(\mu\ast \nu\) is well defined state on \((A,\nu)_{red}\), that agrees with \(\mu\ast \overline{\nu}\).\\
This observation will be used without mentioning in subsection \ref{subsection Ultralimit of C*-probability spaces and the G-equivariant case}.
\end{remark}

\begin{definition}
A \textbf{\emph{factor}} \(p:\mathcal{B}\to \mathcal{A}\) between two BQI \(G\)-\(C^*\)-spaces \(\mathcal{A},\mathcal{B}\) is a factor of \(C^*\)-probability spaces that is \(G\)-equivariant. We say that \(\mathcal{B}\) is a factor of \(\mathcal{A}\) and \(\mathcal{A}\) is an extension of \(\mathcal{B}\).\\
The factor \(p\) is called \textbf{\emph{measure preserving extension/factor}} if \(p\big(R_{\mathcal{B}}(g)\big)=R_{\mathcal{A}}(g)\).
\end{definition}
One can define RN-factor in this setting

\begin{definition}
The \textbf{\emph{Radon Nikodym factor}} of a BQI \(G\)-\(C^*\)-space \(\mathcal{A}\) is the BQI-\(G\)-\(C^*\)-space given by \(\mathcal{A}_{RN}=(A_{RN},\nu|_{A_{RN}})\) where \(A_{RN}=C^{*}(R_{\nu}(g)\big|g\in G)\subset A\) is the sub-\(C^*\)-algebra generated by the Radon-Nikodym cocyle.
\end{definition}
Note that \(A_{RN}\) is \(G\)-invariant as \(^{g}R_\nu(h)=\frac{R_\nu(gh)}{R_\nu(g)}\), and also \({}^g\nu|_{A_{RN}}\ll^{b}\nu|_{A_{RN}} \), so indeed \(\mathcal{A}_{RN}\) is a BQI \(G\)-\(C^*\)-space.
The factor map \(p:\mathcal{A}_{RN}\to \mathcal{A}\) is a measure preserving extension, and if \(\pi:\mathcal{B}\to\mathcal{A}\) is a measure-preserving extension, then \(p\) factors through \(\pi\).\\
\(\)\\
We end this section by noting the relation between our definitions for \(G\)-spaces in the algebraic and classical setting.\\
In one direction, let \((X,\nu)\) be a BQI \(G\)-space, then \((L^{\infty}(X),\nu)\) is a BQI \(G\)-\(C^*\)-probability space. \\ 
In the opposite direction, for a BQI \(G\)-\(C^*\)-probability space \((A,\nu)\), Gelfand's representation theorem provides a compact Hausdorff space \(X\) with \(C(X)\cong A\). The state \(\nu\) defines a Borel measure on \(X\). Moreover, \(X\) admits a natural continuous \(G\)-action and with those, \((X,\nu)\) is a BQI \(G\)-space. Also, the Radon Nikodym cocyle consists of continuous functions. Note that when \(A\) is a separable \(C^*\)-algebra, \(X\) is a RN-model.

\section{f-divergence and Entropy}\label{section f-divergence and Entropy}
Throughout this section, \(f\) denotes a convex function in \((0,\infty)\) with \(f(1)=0\).\\
We denote by \(F(x,y):=yf(\frac{x}{y})\) the corresponding homogeneous function in two variables.\\
It is well known that \(F:(0,\infty)^2\to\mathbb{R}\) is convex (e.g. see Lemma \ref{Lemma F is convex}).\\
In this section we define the notion of entropy based on the notion of \(f\)-divergence, generalizing the Furstenberg entropy in the case
of \(f(t)=t\ln(t)\), and establish a criterion for a measure to minimize it.

\subsection{Definition of f-entropy}\label{subsection Definition of f-entropy}
Recall the definition of \(f\)-divergence:
\begin{definition}[$f$-divergence]
Let \(\eta,m\) be two measures on a measurable space \(X\) in the same measure class. We say that \(\eta\) has finite \(f\)-divergence with respect to \(m\) if \(f(\frac{d\eta}{dm})\in L^{1}(m)\) and define:
\[D_f(\eta||m):=\intop_{X} f(\frac{d\eta}{dm})\:dm\]
\end{definition}
For a more symmetric formulation, note that we can take any measure \(\omega\) in the same measure class then the \(f\)-divergence is finite iff \(F(\frac{d\eta}{d\omega},\frac{dm}{d\omega})\in L^{1}(\omega)\) and we have:
\[D_f(\eta||m)=\intop_{X} F(\frac{d\eta}{d\omega},\frac{dm}{d\omega})\:d\omega\]
The \(f\)-divergence does not change when we change \(f\) by a linear function vanishing at \(1\). Thus, by the convexity one can always assume \(f\geq0\). Hence, we define \(D_f\) to be \(+\infty\) when \(f(\frac{d\eta}{dm})\notin L^1(m)\).\\
First we have:
\begin{lemma}\label{Lemma F is convex}
Let \(f:(0,\infty)\to\mathbb{R}\) be a convex function, then \(F(x,y)=yf(\frac{x}{y})\) is convex.\\
Moreover, if \(f\) is strictly convex and \(x_0,x_1,y_0,y_1\in(0,\infty)\) and \(\frac{x_0}{y_0}\neq \frac{x_1}{y_1}\) then \(F\) restricted to the interval between \((x_0,y_0),(x_1,y_1)\) is strictly convex.  
\end{lemma}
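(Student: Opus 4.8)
The plan is to verify convexity of $F$ directly from the definition, exploiting the homogeneity of $F$ to reduce the two-variable convexity inequality to the one-variable convexity of $f$ evaluated at a cleverly chosen convex combination. This is the classical ``perspective function'' trick.

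First I fix two points $(x_0,y_0),(x_1,y_1)\in(0,\infty)^2$ and $t\in[0,1]$, and set $(x_t,y_t)=(1-t)(x_0,y_0)+t(x_1,y_1)$, which again lies in $(0,\infty)^2$. The key algebraic observation is that the ratio $x_t/y_t$ is itself a convex combination of $x_0/y_0$ and $x_1/y_1$; explicitly, $\frac{x_t}{y_t}=(1-s)\frac{x_0}{y_0}+s\frac{x_1}{y_1}$ with $s=\frac{t y_1}{y_t}\in[0,1]$ and $1-s=\frac{(1-t)y_0}{y_t}$. This is a one-line computation obtained by clearing denominators, and it uses only positivity of $y_0,y_1$ (which also guarantees $s\in[0,1]$, and $s\in(0,1)$ when $t\in(0,1)$).

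Next I apply convexity of $f$ to this combination, obtaining $f(x_t/y_t)\le (1-s)f(x_0/y_0)+s f(x_1/y_1)$, and then multiply through by $y_t>0$. Using the identities $y_t(1-s)=(1-t)y_0$ and $y_t s=t y_1$, the right-hand side becomes $(1-t)y_0 f(x_0/y_0)+t y_1 f(x_1/y_1)=(1-t)F(x_0,y_0)+t F(x_1,y_1)$, while the left-hand side is $y_t f(x_t/y_t)=F(x_t,y_t)$. This is exactly the asserted convexity inequality.

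For the ``moreover'' clause I note that when $t\in(0,1)$ the weight $s$ lies in $(0,1)$, so if $f$ is strictly convex and $x_0/y_0\ne x_1/y_1$ the inequality coming from $f$ is strict; multiplying by $y_t>0$ preserves strictness, giving strict convexity of $F$ along the open segment between $(x_0,y_0)$ and $(x_1,y_1)$. I do not anticipate any genuine obstacle here: the only points requiring care are checking $s\in(0,1)$ (resp.\ $[0,1]$) from positivity of the $y_i$ and observing that the endpoint cases $t\in\{0,1\}$ are trivial equalities. As a sanity check, when $f$ is $C^2$ one can alternatively compute the Hessian $\nabla^2 F(x,y)=\tfrac{f''(x/y)}{y^3}\,(y,-x)^{\mathsf T}(y,-x)\succeq 0$, which is manifestly positive semidefinite when $f''\ge 0$; but the direct argument above has the advantage of requiring no smoothness hypothesis on $f$.
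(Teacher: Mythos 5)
Your proof is correct and is essentially identical to the paper's: both divide through by $y_t=(1-t)y_0+ty_1$, recognize $x_t/y_t$ as the convex combination of $x_0/y_0$ and $x_1/y_1$ with weights $(1-t)y_0/y_t$ and $ty_1/y_t$, and invoke convexity of $f$ (the standard perspective-function argument). Your treatment of the strictness clause and the Hessian sanity check are just slightly more explicit versions of what the paper leaves to the reader.
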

\begin{proof}
Note that the convexity condition 
\[F((1-t)x_0+tx_1,(1-t)y_0+ty_1)\leq (1-t)F(x_0,y_0)+t F(x_1,y_1)\]
is the same as \(f\big(\frac{(1-t)x_0+tx_1}{(1-t)y_0+ty_1}\big)\leq f(\frac{x_0}{y_0})\cdot\frac{(1-t)y_0}{((1-t)y_0+ty_1)}+f(\frac{x_1}{y_1})\cdot\frac{ty_1}{((1-t)y_0+ty_1)}\) which follows from the convexity of \(f\). The moreover part follows too.
\end{proof}
The previous lemma yields the following using (conditional) Jensen inequality:
\begin{lemma}\label{Lemma f-divergence}
\(f\)-divergence has the following properties:
\begin{enumerate}
    \item 
    \(D_f\) is a convex function in both variables and non-negative.
    \item
    For a measurable mapping \(\pi: X\to Y\) we have: 
    \[D_f(\pi_*\eta||\pi_*m)\leq D_f(\eta||m)\] 
    \item
    In the notations of item 2, if \(f\) is strictly convex and
    \(D_f(\pi_*\eta||\pi_*m)= D_f(\eta||m)<\infty\).
    Then \(\frac{d\eta}{dm}\) is \(Y\)-measurable, moreover, \(\frac{d\eta}{dm}=\frac{d\pi_*\eta}{d\pi_*m}\circ\pi\).
\end{enumerate}
\end{lemma}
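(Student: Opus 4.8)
The plan is to derive all three items from the convexity of the homogeneous function $F(x,y)=yf(x/y)$ established in Lemma \ref{Lemma F is convex}, combined with the conditional Jensen inequality. First I fix an auxiliary measure $\omega$ in the common measure class of $\eta$ and $m$ (for instance $\omega=\tfrac12(\eta+m)$), write $p=\tfrac{d\eta}{d\omega}$ and $q=\tfrac{dm}{d\omega}$, so that $D_f(\eta\|m)=\intop_X F(p,q)\,d\omega$. For item 1, non-negativity is immediate: since $f$ may be assumed $\geq 0$ (subtracting the linear support function at $1$ does not change $D_f$), we get $F\geq 0$ and hence the integral is non-negative. Convexity in the pair $(\eta,m)$ then follows because the map $(\eta,m)\mapsto (p,q)$ is affine for fixed $\omega$ and $F$ is convex, so $D_f$ is a composition of an affine map with an integral of a convex function — i.e. convex.

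For item 2, I would push forward along $\pi:X\to Y$. Write $\bar\eta=\pi_*\eta$, $\bar m=\pi_*m$, $\bar\omega=\pi_*\omega$. The key identity is that $\tfrac{d\bar\eta}{d\bar\omega}$ and $\tfrac{d\bar m}{d\bar\omega}$ are the conditional expectations $E_\omega[p\mid \pi^{-1}\mathcal B_Y]$ and $E_\omega[q\mid \pi^{-1}\mathcal B_Y]$ (realized as functions on $Y$). Then
\[
D_f(\bar\eta\|\bar m)=\intop_Y F\!\left(\tfrac{d\bar\eta}{d\bar\omega},\tfrac{d\bar m}{d\bar\omega}\right)d\bar\omega
=\intop_X F\bigl(E_\omega[p\mid\pi^{-1}\mathcal B_Y],\,E_\omega[q\mid\pi^{-1}\mathcal B_Y]\bigr)\,d\omega.
\]
By the conditional Jensen inequality applied to the convex function $F$ of the vector $(p,q)$, the integrand is $\leq E_\omega[F(p,q)\mid\pi^{-1}\mathcal B_Y]$, and integrating against $\omega$ and using the tower property gives $D_f(\bar\eta\|\bar m)\leq \intop_X F(p,q)\,d\omega=D_f(\eta\|m)$.

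For item 3, I would use the equality case in Jensen. When $f$ is strictly convex, Lemma \ref{Lemma F is convex} tells us $F$ is strictly convex along any segment between two points with distinct ratios $x/y$. Equality $D_f(\bar\eta\|\bar m)=D_f(\eta\|m)<\infty$ forces equality in the conditional Jensen inequality $\omega$-a.e., which by strict convexity of $F$ forces the conditional law of $(p,q)$ given $\pi^{-1}\mathcal B_Y$ to be supported on a single ray through the origin $\omega$-a.e.; equivalently the ratio $p/q=\tfrac{d\eta}{dm}$ is $\pi^{-1}\mathcal B_Y$-measurable, i.e. $Y$-measurable. Identifying its value with the pushforward ratio then gives $\tfrac{d\eta}{dm}=\tfrac{d\bar\eta}{d\bar m}\circ\pi$ by comparing conditional expectations. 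The main obstacle I anticipate is making the equality-case argument fully rigorous: one must be careful that $F$ is only \emph{strictly} convex along segments avoiding the degenerate direction $x/y=\text{const}$, and that the points $(p,q)$ live in the open quadrant $(0,\infty)^2$ $\omega$-a.e. (which holds since $\eta,m,\omega$ are mutually absolutely continuous), so that the standard "equality in Jensen $\Rightarrow$ affine on the support" dichotomy can be invoked and then upgraded to "$(p,q)$ lies on a fixed ray" using the homogeneity of $F$.
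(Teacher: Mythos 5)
Your proposal is correct and follows exactly the route the paper intends: the paper gives no written proof of this lemma beyond the remark that it follows from Lemma \ref{Lemma F is convex} via the conditional Jensen inequality, and your write-up (representing \(D_f\) as \(\intop_X F(p,q)\,d\omega\), identifying pushforward densities with conditional expectations, and using the ``moreover'' part of Lemma \ref{Lemma F is convex} to handle the equality case along rays) is precisely the intended argument, with the one genuine subtlety --- that \(F\) is affine along rays and strictly convex only transversally to them --- correctly identified and resolved.
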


\begin{definition}
Let \(X\) be a QI \(G\)-space.
\begin{itemize}
    \item 
    We denote by \(M(X)\) the set of measures on \(X\) in the given measure class on it.
    \item Given a probability measure \(\lambda\) on \(G\) we define
    the \textbf{\emph{Furstenberg \(\lambda,f\)-entropy}} to be the function \(h_{\lambda,f}:M(X)\to[0,\infty]\) given by:
    \[h_{\lambda,f}(\nu)=\sum_{g\in G} \lambda(g)D_{f}(g\nu||\nu)\]
    \item
    Define \(M_{\lambda,f}(X)=h_{\lambda,f}^{-1}[0,\infty)\).
\end{itemize}

\end{definition}

\begin{example}
For \(f(t)=t\ln(t)\) the  divergence \(D_{f}=D_{KL}\) is the famous KL-divergence.  
Notice that \(h_{\lambda,f}=h_{\lambda}\) is the Furstenberg entropy:
\[h_{\lambda}(\nu)=\sum_{g\in G}\lambda(g) \intop_{X} -\ln(\frac{dg^{-1}\nu}{d\nu})\:d\nu\]
\end{example}
An immediate application of Lemma \ref{Lemma f-divergence} yields:

\begin{lemma}\label{Lemma f-entropy}
Let \(\lambda\) be a probability measure on \(G\).
\begin{enumerate}
    \item
    For any QI-\(G\)-space \(X\), the function function \(h_{\lambda,f}\) is convex.
    \item 
    Let \(\pi:(X,\nu)\to (Y,m)\) be a factor of QI \(G\)-spaces. Then \(h_{\lambda,f}(X,\nu)\geq h_{\lambda,f}(Y,m)\). If \(\pi\) is a measure preserving extension we have equality. 
    \item
    Suppose \(\lambda\) is generating and \(f\) is strictly convex. If \(\pi:(X,\nu)\to(Y,m)\) is a factor such that \(h_{\lambda,f}(X,\nu)=h_{\lambda,f}(Y,m)<\infty\) then \(\pi\) is a measure preserving extension.
\end{enumerate}
\end{lemma}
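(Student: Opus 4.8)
The statement to prove is Lemma~\ref{Lemma f-entropy}, which has three parts. Here is how I would organize the proof.

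\medskip

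The plan is to reduce everything to the properties of $f$-divergence already established in Lemma~\ref{Lemma f-divergence}, applied termwise to the sum defining $h_{\lambda,f}$. For part (1), convexity of $h_{\lambda,f}$ on $M(X)$: I would fix two measures $\nu_0,\nu_1$ in the measure class and $t\in[0,1]$, set $\nu_t=(1-t)\nu_0+t\nu_1$, and note that for each $g$ the map $\nu\mapsto D_f(g^{-1}\nu\,||\,\nu)$ is convex because $D_f$ is jointly convex (Lemma~\ref{Lemma f-divergence}(1)) and the assignment $\nu\mapsto(g^{-1}\nu,\nu)$ is affine. Summing against the probability weights $\lambda(g)$ preserves convexity, so $h_{\lambda,f}(\nu_t)\le (1-t)h_{\lambda,f}(\nu_0)+t h_{\lambda,f}(\nu_1)$. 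One small point to check is that the joint convexity of $D_f$ is being used in the homogeneous form $\int F(\tfrac{d\eta}{d\omega},\tfrac{dm}{d\omega})\,d\omega$ with a common reference measure $\omega$ (e.g. $\omega=\nu_0+\nu_1$), which makes the affineness in $(\eta,m)$ transparent; this is exactly the symmetric formulation recorded just before Lemma~\ref{Lemma F is convex}.

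\medskip

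For part (2), let $\pi:(X,\nu)\to(Y,m)$ be a factor of QI $G$-spaces, so $\pi_*\nu=m$ and $\pi$ is $G$-equivariant, hence $\pi_*(g^{-1}\nu)=g^{-1}m$ for every $g$. By the data-processing inequality, Lemma~\ref{Lemma f-divergence}(2), $D_f(g^{-1}m\,||\,m)=D_f(\pi_*(g^{-1}\nu)\,||\,\pi_*\nu)\le D_f(g^{-1}\nu\,||\,\nu)$; multiplying by $\lambda(g)$ and summing gives $h_{\lambda,f}(Y,m)\le h_{\lambda,f}(X,\nu)$. If $\pi$ is a measure preserving extension, then by definition $\frac{dg\nu_X}{d\nu_X}=\frac{dg\nu_Y}{d\nu_Y}\circ\pi$ (equivalently for $g^{-1}$), so $f\bigl(\tfrac{dg^{-1}\nu}{d\nu}\bigr)=f\bigl(\tfrac{dg^{-1}m}{dm}\bigr)\circ\pi$; integrating against $\nu$ and using $\pi_*\nu=m$ turns the left integral into the right one, giving $D_f(g^{-1}\nu\,||\,\nu)=D_f(g^{-1}m\,||\,m)$ for each $g$, hence equality of the entropies (including the case where both are $+\infty$).

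\medskip

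For part (3), assume $\lambda$ is generating, $f$ strictly convex, and $h_{\lambda,f}(X,\nu)=h_{\lambda,f}(Y,m)<\infty$. Since the inequality in part (2) is termwise and the weights $\lambda(g)$ are nonnegative, equality of the (finite) sums forces $D_f(g^{-1}m\,||\,m)=D_f(g^{-1}\nu\,||\,\nu)<\infty$ for every $g$ in the support of $\lambda$. By Lemma~\ref{Lemma f-divergence}(3) this yields that $\frac{dg^{-1}\nu}{d\nu}$ is $Y$-measurable and equals $\frac{dg^{-1}m}{dm}\circ\pi$ for every such $g$. To upgrade this from the support of $\lambda$ to all of $G$, I would use the cocycle identity $R_\nu(g_1g_2)={}^{g_1}R_\nu(g_2)\cdot R_\nu(g_1)$ together with $G$-equivariance of $\pi$: the set of $g$ for which $\frac{dg\nu}{d\nu}=\frac{dg m}{d m}\circ\pi$ holds is closed under products and inverses (the $G$-action on $X$ descends compatibly to $Y$ via $\pi$, so pulling back the cocycle relation on $Y$ matches the one on $X$), hence it is a subgroup; since $\lambda$ is generating, its support generates $G$, so this subgroup is all of $G$. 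That gives exactly the measure preserving extension condition. The main obstacle, and the only place needing genuine care, is this last propagation step: one must verify that the pointwise (a.e.) identities of Radon--Nikodym derivatives for generators really do multiply up along the cocycle without being destroyed by the various null sets, which is handled by the standard observation that countably many a.e.\ identities can be combined, and $G$ being countable makes this harmless.
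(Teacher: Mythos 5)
Your proposal is correct and follows exactly the route the paper intends: the paper derives this lemma as "an immediate application" of Lemma \ref{Lemma f-divergence}, applied termwise to the sum over $g$, and your elaboration (including the cocycle propagation from $\mathrm{supp}(\lambda)$ to all of $G$ in part (3), which the paper leaves implicit) is the standard way to fill in those details.
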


\begin{definition}\label{definition minimal entropy number}
Let \(X\) be a QI \(G\)-space. Given a convex function \(f\) with \(f(1)=0\) and a probability measure \(\lambda\) on \(G\). We define the \textbf{\emph{minimal entropy number}} by:
\[I_{\lambda,f}(X)=\inf_{\nu\in M(X)} h_{\lambda,f}(X,\nu)\]
\end{definition}

\begin{lemma}\label{Lemma convolution-entropy}
Let \((X,\nu)\) be a QI \(G\)-space, 
and \(\kappa\in M(G)\). Then \(h_{\lambda,f}(\kappa)\geq h_{\lambda,f}(\kappa\ast\nu)\).
\end{lemma}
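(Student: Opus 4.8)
The idea is to realize \((X,\kappa\ast\nu)\) as a factor of a suitable product space which is, in turn, a measure-preserving extension of \((G,\kappa)\), and then to invoke the monotonicity of \(h_{\lambda,f}\) under factor maps (Lemma \ref{Lemma f-entropy}). Here \(h_{\lambda,f}(\kappa)\) is read as \(h_{\lambda,f}(G,\kappa)\), where \(G\) acts on itself by left translation; since \(\kappa\in M(G)\) it is fully supported, so \((G,\kappa)\) is indeed a QI \(G\)-space, and one also checks \(\kappa\ast\nu\sim\nu\) (for instance \(\nu\ll h_0\nu\ll\kappa\ast\nu\) for any \(h_0\) in the support of \(\kappa\)), so \((X,\kappa\ast\nu)\) lies in the measure class of \(X\).

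First I would form the QI \(G\)-space \(\mathcal{Z}=(G\times X,\kappa\times\nu)\) in which, crucially, \(G\) acts \emph{only on the first coordinate}: \(g\cdot(h,x)=(gh,x)\). Since \(\kappa\) is fully supported this is quasi-invariant, and its Radon--Nikodym cocycle is \(R_{\mathcal{Z}}(g)(h,x)=\frac{dg\kappa}{d\kappa}(h)\), which depends on \(h\) alone. Hence the coordinate projection \(q:\mathcal{Z}\to(G,\kappa)\), \(q(h,x)=h\), is a \(G\)-equivariant factor which is moreover measure preserving, so by the equality case of Lemma \ref{Lemma f-entropy} we obtain \(h_{\lambda,f}(\mathcal{Z})=h_{\lambda,f}(G,\kappa)=h_{\lambda,f}(\kappa)\).

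Next I would consider the action map \(\pi:\mathcal{Z}\to X\), \(\pi(h,x)=hx\), which is measurable as \(G\) is discrete and the \(G\)-action on \(X\) is measurable. Because \(G\) acts on \(\mathcal{Z}\) through the first coordinate only, \(\pi(g\cdot(h,x))=\pi(gh,x)=ghx=g\cdot\pi(h,x)\), so \(\pi\) is \(G\)-equivariant; and a direct computation gives \(\pi_{\ast}(\kappa\times\nu)=\sum_{h}\kappa(h)\,h\nu=\kappa\ast\nu\). Thus \(\pi:\mathcal{Z}\to(X,\kappa\ast\nu)\) is a factor of QI \(G\)-spaces, and Lemma \ref{Lemma f-entropy} yields \(h_{\lambda,f}(\mathcal{Z})\geq h_{\lambda,f}(X,\kappa\ast\nu)\). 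Combining this with the previous paragraph gives \(h_{\lambda,f}(\kappa)=h_{\lambda,f}(\mathcal{Z})\geq h_{\lambda,f}(\kappa\ast\nu)\), as claimed.

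All the real content sits in the two applications of Lemma \ref{Lemma f-entropy}; the only points needing care are (a) using the first-coordinate-only \(G\)-action on \(G\times X\), which is precisely what makes \(q\) measure preserving --- the naive diagonal action would give \(h_{\lambda,f}(\mathcal{Z})\geq h_{\lambda,f}(\kappa)\), the wrong direction, and would also spoil the additive behaviour one might hope for since \(f\)-divergence of product measures need not split --- and (b) the routine verification that \(\kappa\ast\nu\) lies in the prescribed measure class so that Lemma \ref{Lemma f-entropy} genuinely applies. Note finally that \(\pi\) is in general \emph{not} measure preserving, which is exactly why the conclusion is an inequality and not an equality.
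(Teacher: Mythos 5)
Your proof is correct and follows exactly the paper's argument: build \((G\times X,\kappa\times\nu)\) with \(G\) acting on the first coordinate only, note it is a measure-preserving extension of \((G,\kappa)\) via the projection and an extension of \((X,\kappa\ast\nu)\) via the action map, then apply Lemma \ref{Lemma f-entropy}. The extra verifications you include (equivariance, pushforward computation, measure-class check) are all sound.
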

\begin{proof} 
Consider \(G\times X\) with \(G\) acting only on the first coordinate. Then \((G\times X,\kappa\times\nu)\) is measure preserving extension of \((G,\kappa)\), and is an extension of \((X,\kappa\ast\nu)\) via the action map. It follows from item 2 of Lemma \ref{Lemma f-entropy} that: 
\[h_{\lambda,f}(G,\kappa)=h_{\lambda,f}(G\times X, \kappa\times \nu)\geq h_{\lambda,f}(X,\kappa\ast\nu)\]
\end{proof}
The lemma immediately implies that the action of a group on itself is largest in terms of entropy:
\begin{corollary}\label{Cor Group-Space-Entropy}
For any QI \(G\)-space \(X\) we have \(I_{\lambda,f}(G)\geq I_{\lambda,f}(X)\).
\end{corollary}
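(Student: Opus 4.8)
The plan is to derive this immediately from Lemma \ref{Lemma convolution-entropy}, the only extra ingredient being the observation that convolving a measure on \(G\) with a measure on \(X\) produces a measure still in the prescribed measure class on \(X\).

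First I would fix an arbitrary \(\nu\in M(X)\) and an arbitrary \(\kappa\in M(G)\); recall that since \(G\) is discrete and the chosen measure class on \(G\) must be \(G\)-invariant, \(M(G)\) consists precisely of the fully supported probability measures on \(G\). I would then check that \(\kappa\ast\nu=\sum_{g\in G}\kappa(g)\,g\nu\) lies in \(M(X)\): the absolute continuity \(\kappa\ast\nu\ll\nu\) is immediate because each \(g\nu\ll\nu\) by quasi-invariance, and conversely, if \((\kappa\ast\nu)(B)=0\) then \(g\nu(B)=0\) for every \(g\) with \(\kappa(g)>0\); since \(\kappa\) is fully supported this includes \(g=e\), so \(\nu(B)=0\). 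Hence \(\kappa\ast\nu\) is equivalent to \(\nu\), i.e. \(\kappa\ast\nu\in M(X)\).

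Next I would apply Lemma \ref{Lemma convolution-entropy} to get \(h_{\lambda,f}(G,\kappa)\geq h_{\lambda,f}(X,\kappa\ast\nu)\). Since \(\kappa\ast\nu\in M(X)\), the right-hand side is at least \(I_{\lambda,f}(X)=\inf_{\omega\in M(X)}h_{\lambda,f}(X,\omega)\). Therefore \(h_{\lambda,f}(G,\kappa)\geq I_{\lambda,f}(X)\) for every \(\kappa\in M(G)\), and taking the infimum over all \(\kappa\in M(G)\) gives \(I_{\lambda,f}(G)\geq I_{\lambda,f}(X)\), as desired.

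There is no genuine obstacle here: the statement is essentially a bookkeeping consequence of the preceding lemma. The one place that requires a (tiny) argument, rather than pure definition-chasing, is verifying that \(\kappa\ast\nu\) remains in the fixed measure class \(M(X)\), which is exactly where the full support of \(\kappa\in M(G)\) is used.
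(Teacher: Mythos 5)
Your proof is correct and follows exactly the route the paper intends: the paper simply declares the corollary an immediate consequence of Lemma \ref{Lemma convolution-entropy}, and your argument is that deduction with the one routine detail (that \(\kappa\ast\nu\) remains in the measure class \(M(X)\) because \(\kappa\) is fully supported) written out explicitly.
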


Let us show that for ergodic measure classes, a minimize for entropy is unique (if exists):
\begin{lemma}\label{Lemma uniqueness of entropy minimize}
Suppose \(f\) is a strictly convex function with \(f(1)=0\) and \(\lambda\) is a finitely supported generating probability measure on \(G\). If \(X\) is a QI \(G\)-space which is ergodic (invariant sets are null or co-null), then \(h_{\lambda,f}\) is a strictly convex function on \(M_{\lambda,f}(X)\).\\ In particular there is at most one measure \(\nu\in M_{\lambda,f}(X)\) for which \(h_{\lambda,f}(X,\nu)=I_{\lambda,f}(X)\).
\end{lemma}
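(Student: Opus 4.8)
The plan is to establish strict convexity of $h_{\lambda,f}$ on $M_{\lambda,f}(X)$, since the uniqueness of a minimizer is then immediate: a strictly convex function on a convex set attains its infimum at most once (if $\nu_0,\nu_1$ were two minimizers, the midpoint $\tfrac12\nu_0+\tfrac12\nu_1$ would give a strictly smaller value, unless $\nu_0=\nu_1$). So the real content is the strict convexity statement, and I would organize the proof around the homogeneous function $F(x,y)=yf(x/y)$ and Lemma \ref{Lemma F is convex}.

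First I would fix $\nu_0,\nu_1\in M_{\lambda,f}(X)$ with $\nu_0\neq\nu_1$, set $\nu_t=(1-t)\nu_0+t\nu_1$, and pick a common reference measure $\omega$ (say $\omega=\nu_0$, or $\tfrac12\nu_0+\tfrac12\nu_1$) with densities $u_i=d\nu_i/d\omega$. For each $g\in G$, the term $D_f(g^{-1}\nu_t\,||\,\nu_t)$ can be written, using the symmetric formulation, as $\intop_X F\big(g^{-1}\!\cdot u_t,\,u_t\big)\,d\omega$ where I must be careful that $g^{-1}$ acts on both the density and the reference measure — concretely $D_f(g^{-1}\nu\,||\,\nu)=\intop_X F\big((d g^{-1}\nu/d\omega),(d\nu/d\omega)\big)d\omega$ and the pair $\big((dg^{-1}\nu_t/d\omega),(d\nu_t/d\omega)\big)$ is an affine function of $t$ (both coordinates are affine in $t$, since $\nu\mapsto g^{-1}\nu$ is linear). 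By convexity of $F$ (Lemma \ref{Lemma F is convex}) each integrand is convex in $t$, hence each $D_f$ term is convex, hence $h_{\lambda,f}(\nu_t)=\sum_g\lambda(g)D_f(g^{-1}\nu_t||\nu_t)$ is convex — this recovers item 1 of Lemma \ref{Lemma f-entropy}. For strictness I would use the "moreover" clause of Lemma \ref{Lemma F is convex}: the integrand for a given $g$ is strictly convex in $t$ at any point $x$ of $X$ where the two ratios $\tfrac{d g^{-1}\nu_0/d\omega}{d\nu_0/d\omega}(x)$ and $\tfrac{d g^{-1}\nu_1/d\omega}{d\nu_1/d\omega}(x)$ differ, i.e. where $R_{\nu_0}(g^{-1})(x)\neq R_{\nu_1}(g^{-1})(x)$.

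So strict convexity of $h_{\lambda,f}$ fails only if, for every $g$ in the (generating) support of $\lambda$, the Radon–Nikodym derivatives $R_{\nu_0}(g^{-1})$ and $R_{\nu_1}(g^{-1})$ agree $\omega$-a.e. The key step is to deduce from this that $\nu_0=\nu_1$. I would argue that the function $\varphi=d\nu_1/d\nu_0$ then satisfies, for every $g$ in $\mathrm{supp}(\lambda)$ and hence (by the cocycle identity and $\mathrm{supp}(\lambda)$ generating) for every $g\in G$, the relation ${}^g\varphi=\varphi$ a.e.: indeed $R_{\nu_1}(g)=\tfrac{{}^g\varphi}{\varphi}R_{\nu_0}(g)$, so $R_{\nu_1}(g)=R_{\nu_0}(g)$ forces ${}^g\varphi=\varphi$. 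Thus $\varphi$ is a $G$-invariant measurable function; by ergodicity of $X$ it is a.e. constant, and since $\nu_0,\nu_1$ are probability measures the constant is $1$, giving $\nu_0=\nu_1$. (Finite support of $\lambda$ is what lets me pass the a.e.-equality "for each $g\in\mathrm{supp}\lambda$" to a single conull set on which all these identities hold simultaneously, and then propagate along words.)

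The main obstacle I anticipate is bookkeeping with the reference measure and the $G$-action: one has to make sure the pair of densities fed into $F$ is genuinely affine in $t$ and that "strictly convex along the segment between the endpoint density-pairs" from Lemma \ref{Lemma F is convex} is applied pointwise in $x$ and then integrated correctly — in particular that finiteness ($\nu_i\in M_{\lambda,f}$, so the relevant integrals are finite) justifies the strict inequality passing from the integrand to the integral via a "strict inequality on a non-null set $\Rightarrow$ strict inequality of integrals" argument. A secondary subtlety is the translation between $R_{\nu_i}(g)$ and $R_{\nu_i}(g^{-1})$ and the cocycle relation $R_\nu(gh)={}^gR_\nu(h)\cdot R_\nu(g)$, which is needed to go from the generators to all of $G$; this is routine given the remark after Definition \ref{definition BQI G C space}, but must be done on a single conull set, which is where countability of $G$ and finite support of $\lambda$ enter.
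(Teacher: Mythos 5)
Your proposal is correct and follows essentially the same route as the paper's proof: fix a reference measure $\omega$, apply the convexity (and the strictness clause) of $F(x,y)=yf(x/y)$ from Lemma \ref{Lemma F is convex} pointwise to the affine path of density pairs, observe that failure of strictness forces $\frac{dg\nu_0}{d\nu_0}=\frac{dg\nu_1}{d\nu_1}$ a.e.\ for all $g\in\mathrm{supp}(\lambda)$, and conclude via $G$-invariance of $\frac{d\nu_1}{d\nu_0}$ and ergodicity that $\nu_0=\nu_1$. The extra bookkeeping you supply (the cocycle identity propagating invariance from the generating support to all of $G$, and the finiteness needed to pass strict pointwise inequality to the integrals) is exactly what the paper leaves implicit.
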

\begin{proof}
Fix \(\omega\in M(X)\). Let \(\nu_0,\nu_1\in M_{\lambda,f}(X)\) and \(\nu_{t}=(1-t)\nu_{0}+t \nu_{1}\). Note that as \(F\) is convex:
\begin{multline*}
    h_{\lambda,f}(\nu_{t})=\sum_{g\in G}\lambda(g) \intop_{X} F(\frac{dg\nu_{t}}{d\omega},\frac{d\nu_{t}}{d\omega})\:d\omega\leq\\
    \sum_{g\in G}\lambda(g) \intop_{X} (1-t)F(\frac{dg\nu_{0}}{d\omega},\frac{d\nu_{0}}{d\omega})+ t F(\frac{dg\nu_{1}}{d\omega},\frac{d\nu_{1}}{d\omega})\: d\omega=
    (1-t)h_{\lambda,f}(\nu_{0})+t h_{\lambda,f}(\nu_{1})
\end{multline*}
If one has equality \(h_{\lambda,f}(\nu_{t})=(1-t)h_{\lambda,f}(\nu_{0})+t h_{\lambda,f}(\nu_{1})<\infty\), according to Lemma \ref{Lemma F is convex} for any \(g\in Supp(\lambda)\) we have a.e. \(\frac{dg\nu_{0}}{d\nu_{0}}=\frac{dg\nu_{1}}{d\nu_{1}}\). However, this yields that \(\frac{d\nu_{0}}{d\nu_{1}}\) is \(G\)-invariant (since \(\lambda\) is generating) and using ergodicity we deduce \(\nu_{0}=\nu_{1}\). Thus \(h_{\lambda,f}\) is strictly convex.
\end{proof}

The previous lemma implies that for infinite groups, the infimum defining \(I_{\lambda,f}(G)\) is not attained:
\begin{corollary}\label{corollary no minimizing on G}
Let \(f\) be a strictly convex function with \(f(1)=0\) and \(\lambda\) be a generating probability measure on \(G\). Suppose that \(\kappa\) is a probability measure on \(G\) with \(I_{\lambda,f}(G)=h_{\lambda,f}(\kappa)<\infty\). Then \(G\) is finite and \(\kappa\) is the normalized counting measure.
\end{corollary}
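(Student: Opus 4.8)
The plan is to produce two measures realizing the minimal entropy number \(I_{\lambda,f}(G)\) on the \(G\)-space \((G,\kappa)\) — namely \(\kappa\) itself and its convolution square \(\kappa\ast\kappa\) — to force them to agree by uniqueness of the minimizer, and then to read off that \(G\) is finite from the resulting relation \(\kappa\ast\kappa=\kappa\). Two structural facts are used at the outset: the left-translation action of \(G\) on itself is transitive, so \((G,\kappa)\) is an \emph{ergodic} QI \(G\)-space (its only invariant subsets are \(\varnothing\) and \(G\)), and on the discrete space \(G\) the relevant measure class is that of fully supported probability measures, so \(\kappa\ast\kappa\), being again fully supported, lies in \(M(G)\).

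Apply Lemma \ref{Lemma convolution-entropy} with the \(G\)-space \((G,\kappa)\) and with \(\kappa\) itself as the convolving measure: this gives \(h_{\lambda,f}(G,\kappa)\ge h_{\lambda,f}(G,\kappa\ast\kappa)\). Since \(\kappa\ast\kappa\in M(G)\) one also has \(h_{\lambda,f}(G,\kappa\ast\kappa)\ge I_{\lambda,f}(G)=h_{\lambda,f}(G,\kappa)\), so \(h_{\lambda,f}(G,\kappa\ast\kappa)=I_{\lambda,f}(G)=h_{\lambda,f}(G,\kappa)<\infty\); thus both \(\kappa\) and \(\kappa\ast\kappa\) lie in \(M_{\lambda,f}(G)\) and both attain the minimal entropy number. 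By Lemma \ref{Lemma uniqueness of entropy minimize}, applied to the ergodic space \((G,\kappa)\) (with \(f\) strictly convex and \(\lambda\) generating), the entropy minimizer in \(M_{\lambda,f}(G)\) is unique, hence \(\kappa=\kappa\ast\kappa\).

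It remains to deduce the conclusion from idempotency together with full support, which is elementary. Let \(c=\max_{h\in G}\kappa(h)\), attained at some \(h_{0}\in G\). From \(\kappa=\kappa\ast\kappa\) we get \(c=\kappa(h_{0})=\sum_{g\in G}\kappa(g)\kappa(g^{-1}h_{0})\le c\sum_{g\in G}\kappa(g)=c\), and equality forces \(\kappa(g^{-1}h_{0})=c\) for every \(g\in\operatorname{supp}\kappa\), i.e. for every \(g\in G\) since \(\kappa\) is fully supported. As \(g\) runs over \(G\) so does \(g^{-1}h_{0}\), hence \(\kappa\equiv c\); since \(\sum_{h\in G}\kappa(h)=1\) this forces \(G\) to be finite with \(c=1/|G|\), i.e. \(\kappa\) is the normalized counting measure.

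The only step needing care is the application of Lemma \ref{Lemma uniqueness of entropy minimize}: ergodicity of \((G,\kappa)\) is automatic, but one should confirm the standing hypotheses on \(\lambda\). One can in fact bypass that lemma entirely: in the proof of Lemma \ref{Lemma convolution-entropy}, the space \((G\times G,\kappa\times\kappa)\) with \(G\) acting on the first coordinate only is realized both as a measure-preserving extension of \((G,\kappa)\) and, via the multiplication map \(p(h_{1},h_{2})=h_{1}h_{2}\), as an extension of \((G,\kappa\ast\kappa)\); the three \(\lambda,f\)-entropies being equal and finite, Lemma \ref{Lemma f-entropy}(3) (using \(\lambda\) generating and \(f\) strictly convex) makes \(p\) a measure-preserving extension, so \(R_{(G,\kappa\ast\kappa)}(g)(h_{1}h_{2})=R_{(G,\kappa)}(g)(h_{1})\) for all \(h_{1},h_{2}\in G\); the right-hand side being independent of \(h_{2}\) while the left depends only on \(h_{1}h_{2}\) forces both Radon--Nikodym cocycles to be constant, hence identically \(1\), so \(\kappa\) is \(G\)-invariant, which again forces \(G\) finite with \(\kappa\) the normalized counting measure.
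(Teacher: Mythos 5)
Your proof is correct, but it reaches the conclusion by a different route than the paper. The paper's own argument is a one-liner: right translations of \(G\) commute with the left action and preserve the (fully supported) measure class, hence are automorphisms of the QI \(G\)-space \(G\); they therefore carry the minimizer \(\kappa\) to other minimizers, and uniqueness (Lemma \ref{Lemma uniqueness of entropy minimize}) forces \(\kappa\) to be right-invariant, hence constant. You instead manufacture the second minimizer as \(\kappa\ast\kappa\) via Lemma \ref{Lemma convolution-entropy}, deduce \(\kappa=\kappa\ast\kappa\) from the same uniqueness lemma, and finish with the elementary idempotency argument; the endgames differ (right-invariant constant measure versus idempotent fully supported measure) but both are sound. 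Your observation about the hypotheses is well taken and applies equally to the paper's own proof: Lemma \ref{Lemma uniqueness of entropy minimize} is stated for \emph{finitely supported} generating \(\lambda\), while the corollary assumes only that \(\lambda\) is generating (the finite-support hypothesis is not actually used in the lemma's proof, so this is cosmetic, but it is a genuine mismatch in the written text). Your second route, which replaces the uniqueness lemma by Lemma \ref{Lemma f-entropy}(3) applied to the multiplication factor \(G\times G\to G\) and concludes that the Radon--Nikodym cocycle of \(\kappa\) is identically \(1\), cleanly sidesteps this issue since that lemma requires only that \(\lambda\) be generating and \(f\) strictly convex; this is a modest but real gain in robustness over both your first route and the paper's.
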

\begin{proof}
Since the right \(G\)-action is an automorphism of the (left) \(G\)-space \(G\), Lemma \ref{Lemma uniqueness of entropy minimize} implies that \(\kappa\) is right \(G\)-invariant, concluding the proof. 
\end{proof}

\subsection{A criterion for minimizers of Entropy}\label{subsection Entropy minimize}
Our goal is to find a criterion for a measure on a QI \(G\)-space \(X\) to have minimal entropy \(h_{\lambda,f}\). We stress again that we consider only measures in the measure class given on \(X\).\\
In this subsection we assume in addition that \(f\) is smooth. Recall we denoted \(F(x,y)=yf(\frac{x}{y})\).

\begin{prop}\label{Prop f-divergence inequality}
Let \((X,\nu)\) be a BQI \(G\)-space. Then for any \(m\in M(X)\) and \(g\in G\) we have:
\[D_{f}(gm||m)\geq D_f(g\nu||\nu)+\intop_{X} \frac{\partial F}{\partial x} (1,\frac{dg^{-1}\nu}{d\nu})+\frac{\partial F}{\partial y}(\frac{dg\nu}{d\nu},1) \:\: d(m-\nu)\]
\end{prop}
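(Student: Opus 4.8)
The plan is to reduce the statement to the elementary convexity inequality for the homogeneous function $F$, namely its tangent-plane (first-order) lower bound, applied pointwise and then integrated. Write everything with respect to a common reference measure: take $\omega = \nu$, so that $\frac{d\nu}{d\omega}\equiv 1$, $\frac{dm}{d\omega}=\frac{dm}{d\nu}=:\varphi$, and the Radon–Nikodym cocycle gives $\frac{dg\nu}{d\omega}=\frac{dg\nu}{d\nu}=R_\nu(g)$. For the translated measure $gm$ one has $\frac{dgm}{d\nu}(x)=\frac{dg\nu}{d\nu}(x)\cdot\varphi(g^{-1}x)$; this is the one genuinely measure-theoretic identity I would invoke, and it is standard (it is just the chain rule $\frac{dgm}{d\nu}=\frac{dgm}{dg\nu}\cdot\frac{dg\nu}{d\nu}$ together with $\frac{dgm}{dg\nu}=\frac{dm}{d\nu}\circ g^{-1}$). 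Then, by the symmetric formula for $f$-divergence recalled just before Lemma \ref{Lemma F is convex},
\[
D_f(gm\|m)=\intop_X F\!\left(\tfrac{dgm}{d\nu},\tfrac{dm}{d\nu}\right)d\nu=\intop_X F\big(R_\nu(g)(x)\,\varphi(g^{-1}x),\,\varphi(x)\big)\,d\nu(x).
\]

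Next I would apply, under the integral sign, the first-order convexity inequality for $F$ (which is $C^1$ on $(0,\infty)^2$ since $f$ is smooth): for all points $P,P_0\in(0,\infty)^2$,
\[
F(P)\ \ge\ F(P_0)+\nabla F(P_0)\cdot(P-P_0).
\]
The choice of base point is the crux. I want the base point to be the pair of densities corresponding to the case $m=\nu$, but evaluated so that the gradient terms are exactly the two partial derivatives appearing in the statement. Concretely, at the point $x$ take $P=\big(R_\nu(g)(x)\varphi(g^{-1}x),\varphi(x)\big)$ and $P_0=\big(R_\nu(g)(x),1\big)$. Since $F$ is homogeneous of degree one, $F(R_\nu(g),1)=f(R_\nu(g))$, whose integral against $\nu$ is precisely $D_f(g\nu\|\nu)$. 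Homogeneity of degree one also makes $\partial F/\partial x$ and $\partial F/\partial y$ homogeneous of degree zero, so $\frac{\partial F}{\partial x}(R_\nu(g)(x),1)=\frac{\partial F}{\partial x}\big(1,R_\nu(g)(x)^{-1}\big)=\frac{\partial F}{\partial x}\big(1,\frac{dg^{-1}\nu}{d\nu}(gx)\big)$ — here I use the cocycle identity $R_\nu(g)(x)^{-1}=\frac{dg^{-1}(g\nu)}{d(g\nu)}(x)$ rearranged into $\frac{dg^{-1}\nu}{d\nu}$ evaluated at the shifted point — and similarly $\frac{\partial F}{\partial y}(R_\nu(g)(x),1)$ will, after using homogeneity, match $\frac{\partial F}{\partial y}\big(\frac{dg\nu}{d\nu},1\big)$. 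The linear (gradient) term then splits into the $x$-derivative piece contracted against $(R_\nu(g)(x)\varphi(g^{-1}x)-R_\nu(g)(x))$ and the $y$-derivative piece contracted against $(\varphi(x)-1)$; integrating the second piece against $\nu$ literally gives $\int \frac{\partial F}{\partial y}(\frac{dg\nu}{d\nu},1)\,d(m-\nu)$, and for the first piece I change variables $x\mapsto gx$ (equivalently, push $\nu$ forward by $g$ and use $d g^{-1}\nu = R_\nu(g^{-1})\,d\nu$ — careful bookkeeping of which way the Radon–Nikodym factor goes) to turn $R_\nu(g)(x)\varphi(g^{-1}x)\,d\nu(x)$ into $\varphi\,d\nu = dm$ and the $1\cdot d\nu$ term into $d\nu$, so that it becomes $\int \frac{\partial F}{\partial x}(1,\frac{dg^{-1}\nu}{d\nu})\,d(m-\nu)$. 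Summing the two pieces gives exactly the claimed integrand $\frac{\partial F}{\partial x}(1,\frac{dg^{-1}\nu}{d\nu})+\frac{\partial F}{\partial y}(\frac{dg\nu}{d\nu},1)$ against $d(m-\nu)$.

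The main obstacle I anticipate is bookkeeping, not mathematics: getting the direction of every change of variables and every Radon–Nikodym derivative right, in particular making sure that after the substitution $x\mapsto gx$ the argument of $\partial F/\partial x$ really is $\big(1,\frac{dg^{-1}\nu}{d\nu}\big)$ and not some inverted or $g$-shifted variant, and that the $d(m-\nu)$ (rather than $dm$ or $d\nu$ alone) emerges because the two "$-1$" / "$-\nu$" pieces are exactly the contributions of the base point $P_0$. A secondary point requiring a word of care is integrability: the left side may be $+\infty$, in which case there is nothing to prove; when it is finite one should check that the gradient term is integrable so that the splitting is legitimate — this follows because $F$ is convex and finite, so the tangent term is dominated by $F(P)+F(P_0)$, both of which are integrable (the latter is $f(R_\nu(g))\in L^1(\nu)$ by BQI-ness, since $R_\nu(g)$ is bounded above and below and $f$ is continuous on the relevant compact interval). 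With those checks in place the inequality drops out term by term.
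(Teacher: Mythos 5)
Your proposal is correct and follows essentially the same route as the paper's proof: the first-order convexity (tangent-plane) inequality for the homogeneous convex function \(F\) at the base point given by the densities of \((g\nu,\nu)\), integration (with integrability secured by BQI-ness and the degree-zero homogeneity of the partial derivatives), and the change of variables \(x\mapsto gx\) to convert the \(x\)-derivative term into an integral against \(d(m-\nu)\). The only slip is the intermediate identity \(R_\nu(g)(x)^{-1}=\frac{dg^{-1}\nu}{d\nu}(gx)\), which should read \(\frac{dg^{-1}\nu}{d\nu}(g^{-1}x)\) --- harmless here, since after your substitution \(x\mapsto gx\) it evaluates to \(\frac{dg^{-1}\nu}{d\nu}\) at the new variable exactly as the final formula requires.
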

\begin{proof}
Using the convexity of \(F\) we get the inequality 
\(F(v)\geq F(u)+ \nabla F(u)\cdot (v-u)
\) for any \(v,u\in (0,\infty)^{2}\). Fix \(\omega\in M(X)\) (e.g. \(\omega=\nu\)),
Substituting \(v=(\frac{dgm}{d\omega},\frac{dm}{d\omega}),u=(\frac{dg\nu}{d\omega},\frac{d\nu}{d\omega})\) we have a.e:
\begin{multline*}
    F(\frac{dgm}{d\omega},\frac{dm}{d\omega})\geq F(\frac{dg\nu}{d\omega},\frac{d\nu}{d\omega})+\big\langle\nabla F(\frac{dg\nu}{d\omega},\frac{d\nu}{d\omega}), (\frac{dgm}{d\omega},\frac{dm}{d\omega})-(\frac{dg\nu}{d\omega},\frac{d\nu}{d\omega})\big\rangle=\\
    F(\frac{dg\nu}{d\omega},\frac{d\nu}{d\omega})+\frac{\partial F}{\partial x}(\frac{dg\nu}{d\omega},\frac{d\nu}{d\omega})(\frac{dg(m-\nu)}{d\omega})+\frac{\partial F}{\partial y}(\frac{dg\nu}{d\omega},\frac{d\nu}{d\omega})(\frac{d(m-\nu)}{d\omega})
\end{multline*} 
As both partial derivatives of \(F\) are homogeneous of degree 0 and \(\nu\) is BQI, all terms in the above inequality are \(\omega\)-integrable. We conclude:
\[D_{f}(gm||m)\geq D_{f}(g\nu||\nu)+\intop_{X}\frac{\partial F}{\partial x}(\frac{dg\nu}{d\omega},\frac{d\nu}{d\omega})\:\:dg(m-\nu)+\intop_{X}\frac{\partial F}{\partial y}(\frac{dg\nu}{d\omega},\frac{d\nu}{d\omega})\:\:d(m-\nu)\]
Note that for \(p\in X\) we have \(\frac{dg\nu}{d\nu}(g\cdot p)=\frac{d\nu}{dg^{-1}\nu}(p)\) and thus:
\begin{multline*}
\intop_{X}\frac{\partial F}{\partial x}(\frac{dg\nu}{d\omega},\frac{d\nu}{d\omega})\:\:dg(m-\nu)+\intop_{X}\frac{\partial F}{\partial y}(\frac{dg\nu}{d\omega},\frac{d\nu}{d\omega})\:\:d(m-\nu)=\\
\intop_{X}\frac{\partial F}{\partial x}(\frac{dg\nu}{d\nu}(p),1)\:\:dg(m-\nu)(p)+\intop_{X}\frac{\partial F}{\partial y}(\frac{dg\nu}{d\nu}(p),1)\:\:d(m-\nu)(p)=\\
\intop_{X}\frac{\partial F}{\partial x}(\frac{dg\nu}{d\nu}(g\cdot p),1)\:\:d(m-\nu)(p)+
\intop_{X}\frac{\partial F}{\partial y}(\frac{dg\nu}{d\nu},1)\:\:d(m-\nu)=\\
\intop_{X} \big[\frac{\partial F}{\partial x}(1,\frac{dg^{-1}\nu}{d\nu})+\frac{\partial F}{\partial y}(\frac{dg\nu}{d\nu},1)\big] d(m-\nu)
\end{multline*}
This proves the desired inequality.
\end{proof}

\begin{prop}\label{Proposition f-entropy minimize}
Let \((X,\nu)\) be a BQI \(G\)-space and \(\lambda\) is a  finitely supported measure on \(G\). \\
Then \(h_{\lambda,f}(\nu)=I_{\lambda,f}(X)\) iff the function on \(X\)
\[\Psi_{\lambda,f}(\nu;\boldsymbol{\cdot}):=\sum_{g\in G} \lambda(g)\bigg(\frac{\partial F}{\partial x} (1,\frac{dg^{-1}\nu}{d\nu})+\frac{\partial F}{\partial y}(\frac{dg\nu}{d\nu},1)\bigg)\]
is constant (a.e.).
\end{prop}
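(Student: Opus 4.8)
The plan is to combine the pointwise convexity inequality of Proposition~\ref{Prop f-divergence inequality} (which handles the ``easy'' implication) with a first-order variational argument along the line segments $\nu_t=(1-t)\nu+tm$ in $M(X)$ (which handles the ``hard'' one). Throughout one uses that $\nu$ BQI and $\mathrm{Supp}(\lambda)$ finite force $h_{\lambda,f}(\nu)<\infty$ (each $R_\nu(g^{\pm1})$ takes values in a fixed compact subset of $(0,\infty)$, on which $f$ is bounded) and that $\Psi_{\lambda,f}(\nu;\cdot)$ is a bounded function (the partials $\partial_xF,\partial_yF$ are homogeneous of degree $0$, evaluated on that same compact set).

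For sufficiency, assume $\Psi_{\lambda,f}(\nu;\cdot)$ is a.e.\ constant. Applying Proposition~\ref{Prop f-divergence inequality} with $g$ replaced by $g^{-1}$, multiplying by $\lambda(g)$ and summing over the finite set $\mathrm{Supp}(\lambda)$ gives, for every $m\in M(X)$,
\[ h_{\lambda,f}(m)\;\ge\;h_{\lambda,f}(\nu)+\int_X\Psi_{\lambda,f}(\nu;\cdot)\,d(m-\nu). \]
Since $\Psi_{\lambda,f}(\nu;\cdot)$ is constant and $m,\nu$ are both probability measures, the integral vanishes, so $h_{\lambda,f}(\nu)\le h_{\lambda,f}(m)$ for all $m\in M(X)$, i.e.\ $h_{\lambda,f}(\nu)=I_{\lambda,f}(X)$.

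For necessity, assume $h_{\lambda,f}(\nu)=I_{\lambda,f}(X)$. Fix any $m\in M(X)$ whose density $\tfrac{dm}{d\nu}$ is bounded above and away from $0$; then all the $\nu_t=(1-t)\nu+tm$ ($t\in[0,1]$) are BQI, and $\phi(t):=h_{\lambda,f}(\nu_t)$ is finite and convex by Lemma~\ref{Lemma f-entropy}. Writing $\phi(t)=\sum_{g}\lambda(g)\int_X F\big(\tfrac{dg^{-1}\nu_t}{d\nu},\tfrac{d\nu_t}{d\nu}\big)\,d\nu$, where the two densities are affine in $t$ and stay in a fixed compact subset of $(0,\infty)$ uniformly in $t$, I would differentiate under the finite sum and the integral — the difference quotients are monotone in $t$ by convexity of $F$ and uniformly bounded, so monotone/dominated convergence applies — and then run exactly the change of variables from the proof of Proposition~\ref{Prop f-divergence inequality} (using the cocycle identity $\tfrac{dg^{-1}\nu}{d\nu}(g^{-1}p)=\big(\tfrac{dg\nu}{d\nu}(p)\big)^{-1}$ and the degree-$0$ homogeneity of $\partial_xF,\partial_yF$) to identify
\[ \phi'(0^+)\;=\;\int_X\Psi_{\lambda,f}(\nu;\cdot)\,d(m-\nu). \]
Because $\nu$ is a minimizer, $\phi(t)\ge\phi(0)$ for all $t\in[0,1]$, hence $\phi'(0^+)\ge0$, i.e.\ $\int_X\Psi_{\lambda,f}(\nu;\cdot)\,dm\ge\int_X\Psi_{\lambda,f}(\nu;\cdot)\,d\nu$ for every such $m$.

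It remains to deduce that $\Psi:=\Psi_{\lambda,f}(\nu;\cdot)$ is a.e.\ constant. Put $\bar\Psi=\int_X\Psi\,d\nu$. If $\Psi$ were not a.e.\ constant, then $B:=\{\Psi<\bar\Psi\}$ would satisfy $0<\nu(B)<1$; the probability measure $m$ with $\tfrac{dm}{d\nu}=\tfrac12+\tfrac1{2\nu(B)}\mathbf{1}_B$ then lies in $M(X)$ and has bounded density bounded away from $0$, yet $\int_X\Psi\,dm=\tfrac12\bar\Psi+\tfrac1{2\nu(B)}\int_B\Psi\,d\nu<\bar\Psi=\int_X\Psi\,d\nu$, contradicting the inequality just obtained. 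Hence $\Psi$ is constant a.e. The main obstacle is the differentiation step identifying $\phi'(0^+)$ with $\int_X\Psi_{\lambda,f}(\nu;\cdot)\,d(m-\nu)$; once the interchange of limit with the sum and integral is justified via the BQI bound, the cocycle/homogeneity manipulation is the same one already carried out in Proposition~\ref{Prop f-divergence inequality}, and the rest is routine convexity and the elementary construction of the perturbing measure $m$.
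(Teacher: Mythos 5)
Your proof is correct and follows essentially the same route as the paper's: the sufficiency direction by summing the convexity inequality of Proposition \ref{Prop f-divergence inequality} over $\mathrm{Supp}(\lambda)$, and the necessity direction by computing the first variation $\varphi'(0)=\int_X\Psi_{\lambda,f}(\nu;\cdot)\,d(m-\nu)$ along segments toward measures $m$ in the bounded measure class. Your explicit perturbation $\tfrac{dm}{d\nu}=\tfrac12+\tfrac1{2\nu(B)}\mathbf{1}_B$ just fills in the final step the paper leaves as ``it follows easily.''
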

\begin{proof}
Take \(m\in M(X)\). Using Proposition \ref{Prop f-divergence inequality} we get:
\[D_{f}(gm||m)\geq D_f(g\nu||\nu)+\intop_{X} \frac{\partial F}{\partial x} (1,\frac{dg^{-1}\nu}{d\nu})+\frac{\partial F}{\partial y}(\frac{dg\nu}{d\nu},1) \:\: d(m-\nu)\]
averaging over \(\lambda\) we conclude:
\[h_{\lambda,f}(m)\geq h_{\lambda,f}(\nu)+\intop_{X} \Psi_{\lambda,f}(\nu;x)\:\: d(m-\nu)(x)\]
If \(\Psi_{\lambda,f}(\nu;\cdot)\) is constant, then we get \(h_{\lambda,f}(m)\geq h_{\lambda,f}(\nu)\) and thus \(h_{\lambda,f}(\nu)=I_{\lambda,f}(X)\).\\
To finish the proof, we return to the other direction of the implication in the first statement. Suppose \(\nu\) is BQI measure such that \(h_{\lambda,f}(\nu)=I_{\lambda,f}(X)\). We will show \(\Psi_{\lambda,f}(\nu,\cdot)\) is constant. Consider any measure \(m\) in the bounded measure class of \(\nu\) and define \(\varphi(t)=h_{\lambda,f}((1-t)\nu+tm)\). We claim that:
\[\varphi^\prime(0)=\intop_{X} \Psi_{\lambda,f}(\nu;x) d(m-\nu)(x)\quad\quad(*)\]
This would finish the proof. Indeed, the minimality of \(\nu\) implies that the above integral is non-negative. As we can take any measure \(m\) in the bounded measure class, it follows easily that \(\Psi_{\lambda,f}(\nu;\cdot)\) must be constant a.e.\\
To verify \((*)\) we observe that by definition
\[\varphi(t)=\sum_{g}\lambda(g) \intop_{X} F\Big((1-t)\frac{dg\nu}{d\nu}+t\frac{dgm}{d\nu}\:,\:(1-t)+t\frac{dm}{d\nu}\Big)\:d\nu \]
Since \(\lambda\) has finite support and \(m,g\nu\) are in the bounded measure class of \(\nu\), we can differentiate under the integral sign and \((*)\) follows from a direct computation (as in the proof of Proposition \ref{Prop f-divergence inequality}).
\end{proof}

\begin{example}
For \(\lambda\) symmetric and finitely supported, we have a simpler formula for \(\Psi_{\lambda,f}\).\\
Denote \(\Psi_f(z)=\frac{\partial F}{\partial x}(1,z)+\frac{\partial F}{\partial y}(z,1)=f^{\prime}(\frac{1}{z})+f(z)-z f^\prime(z)\). Then we have:
\[\Psi_{\lambda,f}(\nu;\cdot)=\sum_{g\in G} \lambda(g)\Psi_{f}(\frac{dg\nu}{d\nu})\]
\end{example}

The next inequality will play a key role in our proof of Theorem \ref{Theorem fixed point for stationary actions}:
\begin{corollary}\label{Cor KL-entropy inequality}
Let \(X\) be a QI \(G\)-space, \(\mu\) a probability measure on \(G\) with finite support, and \(\nu\in M(X)\) a BQI measure. Then for any \(m\in M(X)\):
\[h_{\mu}(m)\geq \intop_{X} \big(1-\frac{d\mu\ast\nu}{d\nu}-\sum_{g\in G} \mu(g)\ln(\frac{dg^{-1}\nu}{d\nu})\big)\:\:dm\]
\end{corollary}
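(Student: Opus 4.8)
The plan is to specialize the general inequality obtained en route to Proposition \ref{Proposition f-entropy minimize} to the case $f=-\ln$, and then to absorb the ``entropy'' term on the right-hand side into the integral over $m$. Since $\nu\in M(X)$ is assumed BQI, the pair $(X,\nu)$ is a BQI $G$-space, so Proposition \ref{Prop f-divergence inequality} applies. Averaging it over $\mu$ (in place of $\lambda$), exactly as in the proof of Proposition \ref{Proposition f-entropy minimize}, yields for every $m\in M(X)$ the inequality
\[h_{\mu}(m)\geq h_{\mu}(\nu)+\intop_{X}\Psi_{\mu,f}(\nu;x)\,d(m-\nu)(x),\qquad \Psi_{\mu,f}(\nu;\cdot)=\sum_{g\in G}\mu(g)\Big(\tfrac{\partial F}{\partial x}\big(1,\tfrac{dg\nu}{d\nu}\big)+\tfrac{\partial F}{\partial y}\big(\tfrac{dg^{-1}\nu}{d\nu},1\big)\Big).\]

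First I would compute $\Psi_{\mu,f}$ for $f=-\ln$. Here $F(x,y)=yf(x/y)=y\ln(y/x)$, so $\tfrac{\partial F}{\partial x}(1,z)=-z$ and $\tfrac{\partial F}{\partial y}(z,1)=1-\ln z$; substituting and using $\sum_{g}\mu(g)\tfrac{dg\nu}{d\nu}=\tfrac{d\mu\ast\nu}{d\nu}$ gives
\[\Psi_{\mu,-\ln}(\nu;\cdot)=1-\frac{d\mu\ast\nu}{d\nu}-\sum_{g\in G}\mu(g)\ln\frac{dg^{-1}\nu}{d\nu}=:\Psi,\]
which is exactly the integrand appearing on the right-hand side of the Corollary. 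Since $\nu$ is BQI and $\mu$ has finite support, $\Psi\in L^{\infty}(X)$ and $h_{\mu}(\nu)<\infty$, so all the integrals below are finite; moreover $\Psi$ is $m$-integrable because $m$ and $\nu$ lie in the same measure class.

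The remaining step is the bookkeeping identity $\intop_{X}\Psi\,d\nu=h_{\mu}(\nu)$. Indeed $\intop_{X}1\,d\nu=1$, $\intop_{X}\tfrac{d\mu\ast\nu}{d\nu}\,d\nu=(\mu\ast\nu)(X)=1$, and $\intop_{X}\ln\tfrac{dg^{-1}\nu}{d\nu}\,d\nu=-D_{KL}(g^{-1}\nu||\nu)$ straight from the definition of the KL-divergence, so $\intop_{X}\Psi\,d\nu=\sum_{g}\mu(g)D_{KL}(g^{-1}\nu||\nu)=h_{\mu}(\nu)$. Feeding this into the displayed inequality, the two occurrences of $h_{\mu}(\nu)$ cancel:
\[h_{\mu}(m)\geq h_{\mu}(\nu)+\intop_{X}\Psi\,dm-\intop_{X}\Psi\,d\nu=\intop_{X}\Psi\,dm,\]
which is the claim. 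There is no real obstacle here: the only points requiring a moment of care are the boundedness of $\Psi$ (to guarantee $m$-integrability) and the elementary evaluation of $\intop_{X}\Psi\,d\nu$, both immediate from the BQI hypothesis and the finite support of $\mu$.
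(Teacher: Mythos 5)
Your proposal is correct and follows essentially the same route as the paper: both specialize Proposition \ref{Prop f-divergence inequality} to $f=-\ln$, compute $\tfrac{\partial F}{\partial x}(1,z)=-z$ and $\tfrac{\partial F}{\partial y}(z,1)=1-\ln z$, and exploit the cancellation $\intop_X\Psi\,d\nu=h_{\mu}(\nu)$; the only (immaterial) difference is that the paper performs this cancellation term-by-term for each $g$ before averaging over $\mu$, whereas you average first and cancel afterwards.
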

\begin{proof}
In this case we have \(F(x,y)=x\ln(\frac{x}{y})\) so that \(\frac{\partial F}{\partial x}(1,z)=-\ln(z)+1,\frac{\partial F}{\partial y}(z,1)=-z\). By Proposition \ref{Prop f-divergence inequality}:
\begin{multline*}
D_{KL}(gm||m)\geq D_{KL}(g\nu||\nu)+\intop_{X} \frac{\partial F}{\partial x} (1,\frac{dg^{-1}\nu}{d\nu})+\frac{\partial F}{\partial y}(\frac{dg\nu}{d\nu},1) \:\: d(m-\nu)=\\
=D_{KL}(g^{-1}\nu||\nu)+\intop_{X}\big[ -\ln(\frac{dg^{-1}\nu}{d\nu})+1-\frac{dg\nu}{d\nu}\big] \:\: d(m-\nu)=\intop_{X} \big[1-\frac{dg\nu}{d\nu}-\ln(\frac{dg^{-1}\nu}{d\nu})\big] \: dm
\end{multline*}
Averaging over \(\mu\), we obtain the desired inequality.
\end{proof}

\section{Ultralimits}\label{section Ultralimit}
In this section, we introduce an ultralimit for \(G\)-spaces via ultralimits of \(C^*\)-algebras. We begin by recalling the basic definition of ultralimit of Banach spaces and \(C^*\)-algebras. \\
\(\)\\
For this section, \(\mathcal{I}\) is a set and \(\mathcal{U}\) is an Ultra-filter on \(\mathcal{I}\). Recall that this means \(\mathcal{U}\subset 2^{\mathcal{I}}\) a collection of subsets of \(\mathcal{I}\), satisfying:\\
1. \(I\in \mathcal{U}, \emptyset\notin \mathcal{U}\). \\
2. \(\mathcal{U}\ni A\subset B\implies B\in\mathcal{U}\).\\
3. \(A,B\in\mathcal{U}\implies A\cap B\in\mathcal{U}\).\\
4. for any \(A\subset I\), either \(A\in\mathcal{U}\) or \(I\setminus A\in\mathcal{U}\).

\subsection{Ultralimit of a sequence}\label{subsection Ultralimit of a sequence}
We recall the standard definition of ultralimit of a sequence
\begin{definition}
Let \(\mathcal{I}\) be a set and \(\mathcal{U}\) be a Ultra-filter on \(\mathcal{I}\). 
\begin{itemize}
\item 
We say that a property \(P\) holds for \(\mathcal{U}\)-a.e. \(i \) iff 
\(\{i\in\mathcal{I}\big|P(i) \textrm{ is true} \}\in\mathcal{U} \)
\item
Given a compact Hausdorff space \( X \) and an \(\mathcal{I}\)-sequence \( (a_i)_{i\in\mathcal{I}}\), an element \(a\in X\) is called the \textbf{\emph{\(\mathcal{U}\)-ultralimit}} of \((a_i)\) iff 
for every open neighborhood \(W\) of \(a\), we have \(a_i\in W\) for \(\mathcal{U}\)-a.e. \(i\).\\
We denote \(a=\mathcal{U}\lim_{i\in \mathcal{I}} a_i \).
\end{itemize}
\end{definition}
We have the following well known:
\begin{lemma}\label{lemma ultralimit of sequences}
Let \(X,Y\) be compact Hausdorff spaces. Let \( (a_i)_{i\in \mathcal{I}},(b_i)_{i\in \mathcal{I}}\) be sequences in \(X\). Then:
\begin{enumerate}
    \item
    An ultralimit of \((a_i)\) exists and is unique.
    \item 
    If \(a_i=b_i\) for \(\mathcal{U}\)-a.e. \(i \), then \(\mathcal{U}\lim{a_i}=\mathcal{U}\lim{b_i}\).
    \item
    If \(f:X\rightarrow Y \) is a continuous function then \(f(\mathcal{U}\lim{a_i})=\mathcal{U}\lim{f(a_i)}\).
\end{enumerate}
\end{lemma}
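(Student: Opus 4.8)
The plan is to handle the three items in turn, with existence being the only one that uses compactness in an essential way; uniqueness and items 2 and 3 are then soft consequences of the ultrafilter axioms together with the Hausdorff property, and each reduces to the existence-and-uniqueness of item 1.

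For existence in item 1, I would argue by contradiction. If no point of $X$ is a $\mathcal{U}$-ultralimit of $(a_i)$, then every $x \in X$ has an open neighborhood $W_x$ with $\{i : a_i \in W_x\} \notin \mathcal{U}$, and by ultrafilter axiom 4 this means $\{i : a_i \notin W_x\} \in \mathcal{U}$. Compactness yields a finite subcover $W_{x_1}, \dots, W_{x_n}$ of $X$, and axiom 3 (finite intersections) gives $\bigcap_{k=1}^n \{i : a_i \notin W_{x_k}\} \in \mathcal{U}$. But that intersection is empty, since each $a_i$ lies in some $W_{x_k}$, contradicting axiom 1. For uniqueness, if $a \neq a'$ were both ultralimits, the Hausdorff property provides disjoint open neighborhoods $W \ni a$ and $W' \ni a'$; then $\{i : a_i \in W\}$ and $\{i : a_i \in W'\}$ both lie in $\mathcal{U}$, so their intersection lies in $\mathcal{U}$ by axiom 3, yet it is empty, contradicting axiom 1.

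Item 2 follows from the observation that if $S := \{i : a_i = b_i\} \in \mathcal{U}$ and $a = \mathcal{U}\lim a_i$, then for every open $W \ni a$ we have $\{i : b_i \in W\} \supseteq \{i : a_i \in W\} \cap S$, which lies in $\mathcal{U}$ by axiom 3 and hence in $\mathcal{U}$ by axiom 2; thus $a$ is an ultralimit of $(b_i)$, and item 1 forces $\mathcal{U}\lim b_i = a$. Item 3 is analogous: with $a = \mathcal{U}\lim a_i$ and $V$ any open neighborhood of $f(a)$ in $Y$, continuity makes $f^{-1}(V)$ an open neighborhood of $a$, so $\{i : f(a_i) \in V\} = \{i : a_i \in f^{-1}(V)\} \in \mathcal{U}$; hence $f(a)$ is an ultralimit of $(f(a_i))$, and uniqueness from item 1 gives $f(\mathcal{U}\lim a_i) = \mathcal{U}\lim f(a_i)$.

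The only genuine content is the compactness argument in the existence half of item 1 — everything else is bookkeeping with the filter axioms — so that is the step to be careful with, in particular extracting the finite subcover before invoking the finite-intersection property of $\mathcal{U}$.
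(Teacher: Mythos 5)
Your proof is correct and is the standard argument; the paper itself states this lemma as well known and omits the proof entirely, so there is nothing to compare against. Your compactness-plus-finite-intersection argument for existence, Hausdorff separation for uniqueness, and the reduction of items 2 and 3 to uniqueness are exactly the intended reasoning.
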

\begin{example}
Considering the compact Hausdorff space \(\overline{\mathbb{R}}=\mathbb{R}\cup\{\pm \infty\} \) we have a well defined \(\mathcal{U}\)-ultralimit of any sequence of real numbers, and we also can define the \(\mathcal{U}\)-ultralimit of any bounded sequence of complex numbers. This ultralimit is a complex number.\\
By Lemma \ref{lemma ultralimit of sequences}, the resulting map \(\mathcal{U}\lim_{i\in\mathcal{I}}:\ell^\infty (\mathcal{I})\rightarrow\mathbb{C} \) is a \(C^*\)-homomorphism.
\end{example}

\subsection{Ultralimits of Banach spaces and C*-algebras}\label{subsection Ultralimit of Banach spaces and C* algebras}
We recall the basic notion from \cite{Heinrich1980UltraproductsIB}.
\begin{notation}
Let \(B_i\) be normed linear spaces (over \(\mathbb{C}\)) for \(i\in\mathcal{I}\), and let \(\mathcal{U}\) be an ultra-filter on \(\mathcal{I}\).\\
Define the following normed space:
\[V:=\ell^\infty(\mathcal{I},\{B_i\}):=\bigg\{(v_i)_{i\in\mathcal{I}}\in \prod_{i\in\mathcal{I}} B_i\Bigg|\:\sup_i ||v_i||_{B_i}<\infty\bigg\}\:,\quad ||(v_i)||_V=\sup_i ||v_i||_{B_i}\]
Consider the subspace: 
\(\mathcal{N}_\mathcal{U}=\bigg\{(v_i)\in V \Big|\: \mathcal{U}\lim ||v_i||_{B_i}=0\bigg\}\subset V\).
\end{notation}
One has the following formula for the quotient norm:
\begin{lemma}\label{lemma norm on ultralimit}
In the notations above:
\begin{itemize}
    \item \(\mathcal{N}_{\mathcal{U}}\subset V\) is a closed subspace.
    \item For all \(v=(v_i)\in V\)
\[\inf_{u\in\mathcal{N}_{\mathcal{U}}} ||v+u||_V=\mathcal{U}\lim ||v_i||_{B_i}\]
\end{itemize}
\end{lemma}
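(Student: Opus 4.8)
\textbf{Proof plan for Lemma \ref{lemma norm on ultralimit}.}

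The plan is to verify the two bullet points in order. For the first, I would show that $\mathcal{N}_{\mathcal U}$ is a linear subspace of $V$ and that it is closed. Linearity is immediate from the triangle inequality in each $B_i$ together with the fact that $\mathcal{U}\lim$ is subadditive and homogeneous on bounded sequences of reals (Lemma \ref{lemma ultralimit of sequences}, applied through the $C^*$-homomorphism $\mathcal{U}\lim\colon\ell^\infty(\mathcal I)\to\mathbb C$ of the Example): if $(v_i),(w_i)\in\mathcal{N}_{\mathcal U}$ then $\|v_i+w_i\|_{B_i}\le\|v_i\|_{B_i}+\|w_i\|_{B_i}$, and taking $\mathcal{U}\lim$ of both sides gives $0$. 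For closedness, suppose $v^{(n)}=(v^{(n)}_i)\in\mathcal{N}_{\mathcal U}$ converges to $v=(v_i)$ in $V$; then $\sup_i\|v_i-v^{(n)}_i\|_{B_i}=\|v-v^{(n)}\|_V\to 0$, and since $\|v_i\|_{B_i}\le\|v^{(n)}_i\|_{B_i}+\|v-v^{(n)}\|_V$ pointwise, applying $\mathcal{U}\lim$ yields $\mathcal{U}\lim\|v_i\|_{B_i}\le\|v-v^{(n)}\|_V$ for every $n$, hence it is $0$.

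For the quotient-norm formula, I would prove the two inequalities separately. The inequality $\le$ is trivial: taking $u=0\in\mathcal{N}_{\mathcal U}$ gives $\inf_{u}\|v+u\|_V\le\|v\|_V=\sup_i\|v_i\|_{B_i}$ — wait, this only gives the sup, not the ultralimit, so instead I would argue as follows. For any $u=(u_i)\in\mathcal{N}_{\mathcal U}$ we have $\|v_i\|_{B_i}\le\|v_i+u_i\|_{B_i}+\|u_i\|_{B_i}\le\|v+u\|_V+\|u_i\|_{B_i}$; applying $\mathcal{U}\lim$ and using $\mathcal{U}\lim\|u_i\|_{B_i}=0$ gives $\mathcal{U}\lim\|v_i\|_{B_i}\le\|v+u\|_V$, and taking the infimum over $u$ yields $\mathcal{U}\lim\|v_i\|_{B_i}\le\inf_{u\in\mathcal{N}_{\mathcal U}}\|v+u\|_V$.

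For the reverse inequality $\ge$, set $L=\mathcal{U}\lim\|v_i\|_{B_i}$ and, given $\varepsilon>0$, let $S=\{i\in\mathcal I:\|v_i\|_{B_i}<L+\varepsilon\}\in\mathcal U$. Define $u=(u_i)$ by $u_i=0$ for $i\in S$ and $u_i=-v_i$ for $i\notin S$. Then $u\in V$ and, since $\mathcal I\setminus S\notin\mathcal U$, the set where $\|u_i\|_{B_i}\ne 0$ is $\mathcal U$-null, so $\mathcal{U}\lim\|u_i\|_{B_i}=0$, i.e.\ $u\in\mathcal{N}_{\mathcal U}$. Now $\|v+u\|_V=\sup_i\|v_i+u_i\|_{B_i}=\sup_{i\in S}\|v_i\|_{B_i}\le L+\varepsilon$, so $\inf_{u\in\mathcal{N}_{\mathcal U}}\|v+u\|_V\le L+\varepsilon$; letting $\varepsilon\to 0$ completes the proof. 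The only mild subtlety — the step I would be most careful with — is this last construction: one must check that modifying $v$ to $0$ off a $\mathcal U$-large set genuinely lands in $\mathcal{N}_{\mathcal U}$ and simultaneously controls the full supremum norm of $v+u$, which works precisely because $\mathcal U$ is an ultrafilter (so $S$ or its complement lies in $\mathcal U$) and because modifying coordinates off $S$ does not increase the relevant supremum.
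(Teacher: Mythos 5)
Your proof is correct. The paper itself gives no proof of this lemma (it defers to the standard ultraproduct literature, \cite{Heinrich1980UltraproductsIB}), and your argument is exactly the standard one: the triangle-inequality estimate using $\mathcal{U}\lim\|u_i\|_{B_i}=0$ for one direction, and the construction $u_i=-v_i$ off the $\mathcal{U}$-large set $S=\{i:\|v_i\|_{B_i}<L+\varepsilon\}$ for the other. One cosmetic remark: the inequality you label ``$\le$'' is actually the bound $\mathcal{U}\lim\|v_i\|_{B_i}\le\inf_{u}\|v+u\|_V$, i.e.\ the ``$\ge$'' half of the stated equality, so your two labels are swapped — but both halves are proven correctly and together give the claim.
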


\begin{definition}
Define the \textbf{\emph{\(\mathcal{U}\)-ultralimit}} of the normed spaces \(B_i\) to be the normed space 
\[B:=\mathcal{U}\lim B_i:=\dfrac{\ell^\infty(\mathcal{I},\{B_i\})}{\mathcal{N}_{\mathcal{U}}}\]
We will write \(\mathcal{U}\lim v_i\) for the class of \((v_i)\) in \(B\). By Lemma \ref{lemma norm on ultralimit} we have \(||\mathcal{U}\lim v_i||_B=\mathcal{U}\lim ||v_i||_{B_i}\).
\end{definition}

\begin{lemma}\label{Lemma Ulim is banach}
Suppose \((B_i)_{i\in\mathcal{I}}\) are Banach spaces, then:
\begin{enumerate}
\item
\(\mathcal{U}\lim B_i\) is a Banach space.
\item
Let \(a^{(n)}_i\in B_i\), and suppose that \(M_n\) are positive numbers with \(||a^{(n)}_i||_{B_i}\leq M_n\) and \(\sum M_n<\infty\). Then we have: 
\[\mathcal{U}\lim_i \sum_n a^{(n)}_i=\sum_n \mathcal{U}\lim_i a^{(n)}_i\]
\end{enumerate}
\end{lemma}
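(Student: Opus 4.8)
The plan is to prove Lemma \ref{Lemma Ulim is banach} in two parts, handling completeness first and then the interchange of ultralimit with an absolutely convergent series.

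\medskip

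For part (1), I would show $B = \mathcal{U}\lim B_i = \ell^\infty(\mathcal{I},\{B_i\})/\mathcal{N}_{\mathcal{U}}$ is complete. Since $\mathcal{N}_{\mathcal{U}}$ is a closed subspace of $V = \ell^\infty(\mathcal{I},\{B_i\})$ (Lemma \ref{lemma norm on ultralimit}), it suffices to know $V$ itself is a Banach space, for then the quotient of a Banach space by a closed subspace is Banach. So the first step is to verify that $V$ with the sup-norm is complete: given a Cauchy sequence $(v^{(n)})_n$ in $V$ with $v^{(n)} = (v^{(n)}_i)_{i\in\mathcal{I}}$, for each fixed $i$ the sequence $(v^{(n)}_i)_n$ is Cauchy in $B_i$ (because $\|v^{(n)}_i - v^{(m)}_i\|_{B_i} \le \|v^{(n)} - v^{(m)}\|_V$), hence converges to some $w_i \in B_i$ by completeness of $B_i$; a standard $\varepsilon/2$ argument shows $w = (w_i) \in V$ and $v^{(n)} \to w$ in $V$. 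Then invoke the general fact (or reprove it quickly) that if $N \subset V$ is a closed subspace of a Banach space, $V/N$ is Banach, using that the quotient norm $\|\bar v\|_{V/N} = \inf_{u\in N}\|v+u\|_V$ makes $V/N$ complete — the usual proof passes through absolutely summable series. This is entirely routine.

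\medskip

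For part (2), fix elements $a^{(n)}_i \in B_i$ with $\|a^{(n)}_i\|_{B_i} \le M_n$ and $\sum_n M_n < \infty$. For each $i$, the series $\sum_n a^{(n)}_i$ converges absolutely in $B_i$, and $\big\|\sum_n a^{(n)}_i\big\|_{B_i} \le \sum_n M_n$, so the sequence $s_i := \sum_n a^{(n)}_i$ lies in $V$ and the partial sums $s_i^{(N)} := \sum_{n=0}^{N} a^{(n)}_i$ also lie in $V$. Work with the ultralimit map's key property that $\|\mathcal{U}\lim v_i\|_B = \mathcal{U}\lim \|v_i\|_{B_i}$ and that $v \mapsto \mathcal{U}\lim v_i$ is linear and norm-one (it is the quotient map $V \to V/\mathcal{N}_{\mathcal{U}}$). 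For each finite $N$, linearity gives $\mathcal{U}\lim_i s_i^{(N)} = \sum_{n=0}^{N} \mathcal{U}\lim_i a^{(n)}_i$. Now estimate the tail: $\big\|\mathcal{U}\lim_i s_i - \mathcal{U}\lim_i s_i^{(N)}\big\|_B = \big\|\mathcal{U}\lim_i (s_i - s_i^{(N)})\big\|_B = \mathcal{U}\lim_i \big\|s_i - s_i^{(N)}\big\|_{B_i} \le \mathcal{U}\lim_i \sum_{n>N} M_n = \sum_{n>N} M_n$, which tends to $0$ as $N\to\infty$. Also $\big\|\sum_n \mathcal{U}\lim_i a^{(n)}_i - \sum_{n=0}^{N}\mathcal{U}\lim_i a^{(n)}_i\big\|_B \le \sum_{n>N} M_n \to 0$ (this uses that $\sum_n \mathcal{U}\lim_i a^{(n)}_i$ converges in the Banach space $B$, which follows from part (1) together with $\|\mathcal{U}\lim_i a^{(n)}_i\|_B \le M_n$). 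Letting $N\to\infty$ in $\mathcal{U}\lim_i s_i^{(N)} = \sum_{n=0}^{N}\mathcal{U}\lim_i a^{(n)}_i$ and using both tail estimates yields $\mathcal{U}\lim_i \sum_n a^{(n)}_i = \sum_n \mathcal{U}\lim_i a^{(n)}_i$.

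\medskip

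I do not expect any serious obstacle here; the whole lemma is a bookkeeping exercise once one has the norm formula of Lemma \ref{lemma norm on ultralimit} in hand. The only point requiring a little care — and the closest thing to a "hard part" — is making sure the interchange argument in part (2) is organized so that one never needs to commute $\mathcal{U}\lim_i$ with an infinite sum directly; instead one commutes it with finite sums (trivial by linearity) and controls everything else by the uniform tail bound $\sum_{n>N} M_n$, which is legitimate precisely because the ultralimit map is a linear contraction onto the Banach space $B$. One should also double-check that part (2) is genuinely needed downstream (it will be used to handle series like $\mu_a = (1-a)\sum_n a^n \mu^{*n}$ under ultralimits), which guides the exact form of the statement but does not affect the proof.
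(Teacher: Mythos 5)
Your proof is correct and takes essentially the same route as the paper: part (1) is the standard fact that a quotient of a Banach space by a closed subspace is Banach (the paper leaves the completeness of $V=\ell^\infty(\mathcal{I},\{B_i\})$ implicit, which you spell out), and part (2) is exactly the observation that the series converges in $V$ thanks to the uniform bound $\sum_n M_n<\infty$ and that the quotient map $V\to B$ is a continuous linear map, hence commutes with the sum. Your partial-sum and tail-estimate bookkeeping is just the unwound version of the paper's one-line appeal to continuity of the projection.
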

\begin{proof}
For (1): the quotient of a Banach space by a closed subspace is a Banach space.\\
For (2): We have equality \((\sum_{n} a_i^{(n)})_{i}=\sum_{n} (a_i^{(n)})_i\) on \(V\) and the projection to \(B\) is continuous.   
\end{proof}

To define the ultralimit of \(C^{*}\) algebras, we make the following observation:

\begin{lemma}\label{Lemma Ulim is C algebra}
Suppose \((B_i)_{i\in\mathcal{I}}\) are \(C^*\) algebras.
Then \(B=\mathcal{U}\lim B_i\) has a unique structure of \(C^*\) algebra satisfying \((\mathcal{U}\lim a_i)^*=\mathcal{U}\lim a_i^*,\: (\mathcal{U}\lim a_i)\cdot(\mathcal{U}\lim b_i)=\mathcal{U}\lim a_i\cdot b_i\).\\
If \(B_i\) are commutative then so is \(B\). If \(B_i\) all have unit then \(\mathcal{U}\lim 1_{B_i}\) is a unit for \(B\).
\end{lemma}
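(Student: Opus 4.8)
The plan is to transport the algebraic operations from the product algebra $\prod_i B_i$ through the quotient $V/\mathcal{N}_{\mathcal{U}}$, exactly as was done for the linear structure in Lemma \ref{Lemma Ulim is banach}, and then verify the $C^*$-identity norm-wise using the already established formula $\|\mathcal{U}\lim a_i\|_B = \mathcal{U}\lim \|a_i\|_{B_i}$. Concretely, I would first observe that $V = \ell^\infty(\mathcal{I},\{B_i\})$ is itself a $C^*$-algebra under coordinatewise multiplication and involution, since $\sup_i\|a_ib_i\| \le (\sup_i\|a_i\|)(\sup_i\|b_i\|) < \infty$ and $\|a_i^*a_i\| = \|a_i\|^2$ coordinatewise gives $\|aa^*\|_V = \|a\|_V^2$. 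The key structural point is then that $\mathcal{N}_{\mathcal{U}}$ is a closed two-sided $*$-ideal of $V$: closedness is Lemma \ref{lemma norm on ultralimit}, stability under involution is immediate from $\|a_i^*\| = \|a_i\|$, and stability under multiplication by elements of $V$ follows from $\mathcal{U}\lim\|a_ib_i\| \le (\sup_i\|b_i\|)\,\mathcal{U}\lim\|a_i\| = 0$ using the basic properties of ultralimits of real sequences (Lemma \ref{lemma ultralimit of sequences}). Hence $B = V/\mathcal{N}_{\mathcal{U}}$ inherits a well-defined $*$-algebra structure, and the quotient norm is automatically a $C^*$-norm because a quotient of a $C^*$-algebra by a closed two-sided $*$-ideal is a $C^*$-algebra — alternatively one checks the $C^*$-identity directly from $\|\mathcal{U}\lim a_i\|_B^2 = (\mathcal{U}\lim\|a_i\|)^2 = \mathcal{U}\lim\|a_i\|^2 = \mathcal{U}\lim\|a_i^*a_i\| = \|\mathcal{U}\lim a_i^*a_i\|_B = \|(\mathcal{U}\lim a_i)^*(\mathcal{U}\lim a_i)\|_B$, where the continuity of squaring on $\overline{\mathbb{R}}$ and Lemma \ref{lemma ultralimit of sequences} justify the middle step.

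Uniqueness of the $C^*$-structure follows because the stated formulas prescribe the product and involution on the dense (in fact all of) the image of $V$, hence on all of $B$, and a $C^*$-norm on a given $*$-algebra is unique. For commutativity, if each $B_i$ is commutative then $\mathcal{U}\lim a_i \cdot \mathcal{U}\lim b_i = \mathcal{U}\lim(a_ib_i) = \mathcal{U}\lim(b_ia_i) = \mathcal{U}\lim b_i\cdot\mathcal{U}\lim a_i$. For the unit, if each $B_i$ has unit $1_{B_i}$ with $\|1_{B_i}\| = 1$, then $(1_{B_i})_i \in V$ and $\mathcal{U}\lim 1_{B_i}\cdot\mathcal{U}\lim a_i = \mathcal{U}\lim(1_{B_i}a_i) = \mathcal{U}\lim a_i$, and symmetrically on the other side.

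The only genuinely delicate point is the well-definedness of multiplication on the quotient, i.e. that $\mathcal{N}_{\mathcal{U}}$ absorbs multiplication — but this reduces, as indicated above, to the elementary estimate $\mathcal{U}\lim\|a_ib_i\|_{B_i} \le \|(b_i)\|_V\cdot\mathcal{U}\lim\|a_i\|_{B_i}$, which in turn is just monotonicity and the product rule for ultralimits of bounded nonnegative real sequences. Everything else is a routine transfer of identities coordinatewise combined with the norm formula of Lemma \ref{lemma norm on ultralimit}, so I do not expect any real obstacle; the proof is essentially a citation-style verification.
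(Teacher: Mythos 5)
Your proposal is correct and follows essentially the same route as the paper: realize $V=\ell^\infty(\mathcal{I},\{B_i\})$ as a $C^*$-algebra with coordinatewise operations, observe that $\mathcal{N}_{\mathcal{U}}$ is a closed two-sided $*$-ideal, and pass to the quotient. The paper states this in three lines; you have merely filled in the routine verifications (absorption of $\mathcal{N}_{\mathcal{U}}$ under multiplication, the $C^*$-identity via the norm formula, uniqueness, commutativity, and the unit), all of which check out.
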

\begin{proof}
Indeed, \(V=\ell^{\infty}(\mathcal{I},\{B_i\})\) is a \(C^*\) algebra with the operations \((a_i)^*=(a_i^*),\: (a_i)\cdot(b_i)=(a_i\cdot b_i)\). Clearly, \(\mathcal{N}_{\mathcal{U}}\) defined above is a \(^*\)-two sided ideal. So the quotient is a \(C^*\)-algebra, the lemma follows.
\end{proof}

\begin{definition}
Given \(C^*\) algebras \(B_i\), their ultralimit \(\mathcal{U}\lim B_i\) is the ultralimit as above together with the \(C^*\)-algebra structure as in Lemma \ref{Lemma Ulim is C algebra}.
\end{definition}

The following basic lemma is useful:
\begin{lemma}\label{Lemma continous calculus and ultralimit}
Let \(A_i\) be \(C^*\) algebras, \(A=\mathcal{U}\lim A_i\) be the ultralimit.
\begin{enumerate}
    \item 
    If \(a_i\in A_i,\: \sup ||a_i||<\infty\) are self adjoint/positive/normal then \(a=\mathcal{U}\lim a_i\) is self adjoint/positive/normal. 
    \item 
    If \(a_i\in A\) are normal and \(||a_i||\leq M\), then for any continuous function \(f\) on the disc \(\overline{D}(0,M)\) we have \(f(\mathcal{U}\lim a_i)=\mathcal{U}\lim f(a_i)\).
    \item
    If \(a\in A\) is self adjoint/positive then we can find \(a_i\in A_i\) self adjoint/positive with \(||a_i||=||a||, a=\mathcal{U}\lim a_i\).

\end{enumerate}
\end{lemma}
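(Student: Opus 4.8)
The plan is to handle the three items in order, each feeding the next.

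For (1) I would simply unwind Lemma \ref{Lemma Ulim is C algebra}: the involution and product on \(A=\mathcal{U}\lim A_i\) are computed coordinatewise on bounded representatives, so \(a=\mathcal{U}\lim a_i\) satisfies \(a^*=\mathcal{U}\lim a_i^*\) and \(a^*a-aa^*=\mathcal{U}\lim(a_i^*a_i-a_i a_i^*)\); hence self-adjointness, resp.\ normality, of each \(a_i\) passes to \(a\). For positivity I would put \(b_i:=a_i^{1/2}\) (continuous functional calculus inside \(A_i\)), note \(\sup_i\|b_i\|=(\sup_i\|a_i\|)^{1/2}<\infty\) and \(b_i=b_i^*\), so that \(b:=\mathcal{U}\lim b_i\) is self-adjoint by the case just proved and \(b^2=\mathcal{U}\lim b_i^2=\mathcal{U}\lim a_i=a\); thus \(a=b^*b\ge 0\).

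The real content is (2). The first step is the observation that the quotient map \(\Phi\colon V=\ell^\infty(\mathcal{I},\{A_i\})\to A\) is a \(*\)-homomorphism which is norm-decreasing (immediate from Lemma \ref{lemma norm on ultralimit} and Lemma \ref{Lemma Ulim is C algebra}, since \(\mathcal{N}_{\mathcal{U}}\) is a closed \(*\)-ideal), and it sends \((a_i)_i\) to \(a\) and \((1_{A_i})_i\) to \(1_A\); consequently \(p(a,a^*)=\mathcal{U}\lim p(a_i,a_i^*)\) for every \(*\)-polynomial \(p\) in \(z,\bar z\). The second step is to pass from \(*\)-polynomials to an arbitrary \(f\in C(\overline{D}(0,M))\) via Stone--Weierstrass: choose \(*\)-polynomials \(p_n\to f\) uniformly on \(\overline{D}(0,M)\). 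Because each \(a_i\) is normal with \(\sigma(a_i)\subseteq\overline{D}(0,M)\), the functional calculus is isometric, so \(\|f(a_i)-p_n(a_i,a_i^*)\|\le\|f-p_n\|_{C(\overline{D}(0,M))}\) \emph{uniformly in \(i\)}, and likewise \(p_n(a,a^*)\to f(a)\) (here one uses (1) to know \(a\) is normal with \(\|a\|=\mathcal{U}\lim\|a_i\|\le M\)). Hence \((p_n(a_i,a_i^*))_i\to(f(a_i))_i\) in \(V\), and applying the contraction \(\Phi\) gives \(f(a)=\lim_n p_n(a,a^*)=\lim_n\Phi\big((p_n(a_i,a_i^*))_i\big)=\Phi\big((f(a_i))_i\big)=\mathcal{U}\lim f(a_i)\). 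The one point that needs care — and the only genuine obstacle — is the interchange of \(\lim_n\) and \(\mathcal{U}\lim_i\); this is legitimate precisely because the estimate \(\|f-p_n\|_{C(\overline{D}(0,M))}\) is independent of \(i\), which in turn is exactly what the spectral containment \(\sigma(a_i)\subseteq\overline{D}(0,M)\) provides.

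For (3), given \(a=a^*\in A\) I would start from any bounded representative \((b_i)_i\) of \(a\), symmetrize to \(c_i:=\tfrac12(b_i+b_i^*)\) — still bounded, self-adjoint, and with \(\mathcal{U}\lim c_i=\tfrac12(a+a^*)=a\) — and set \(M:=\sup_i\|c_i\|\). Then truncate: let \(g\in C(\overline{D}(0,M))\) agree with the identity on \([-\|a\|,\|a\|]\) and take values in \([-\|a\|,\|a\|]\), and put \(\widetilde a_i:=g(c_i)\); this is self-adjoint with \(\|\widetilde a_i\|\le\|a\|\), and by (2) one has \(\mathcal{U}\lim\widetilde a_i=g(\mathcal{U}\lim c_i)=g(a)=a\) since \(\sigma(a)\subseteq[-\|a\|,\|a\|]\). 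Finally, to make the norm exactly \(\|a\|\), rescale: since \(\mathcal{U}\lim\|\widetilde a_i\|=\|a\|\), on the \(\mathcal{U}\)-large set where \(\widetilde a_i\neq 0\) put \(a_i:=\tfrac{\|a\|}{\|\widetilde a_i\|}\widetilde a_i\), so that \(\|a_i-\widetilde a_i\|=\big|\,\|a\|-\|\widetilde a_i\|\,\big|\to 0\) along \(\mathcal{U}\) and hence \(\mathcal{U}\lim a_i=a\) with \(\|a_i\|=\|a\|\); on the remaining \(\mathcal{U}\)-null set (and in the trivial case \(a=0\)) one picks any self-adjoint element of the required norm, which is harmless. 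The positive case is identical, taking \(g\) with range in \([0,\|a\|]\) and using that multiplication by a positive scalar preserves positivity.
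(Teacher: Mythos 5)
Your argument is correct and complete; the paper itself states this lemma without proof, treating it as standard, and what you have written is precisely the standard argument that fills that gap. The two genuinely delicate points are both handled properly: in (2), the interchange of \(\lim_n\) and \(\mathcal{U}\lim_i\) is justified by the estimate \(\|f(a_i)-p_n(a_i,a_i^*)\|\le\|f-p_n\|_{C(\overline{D}(0,M))}\), which is uniform in \(i\) exactly because \(\sigma(a_i)\subseteq\overline{D}(0,M)\) for every \(i\); and in (3), the truncation by \(g\) followed by the rescaling \(a_i=\frac{\|a\|}{\|\widetilde a_i\|}\widetilde a_i\) on a \(\mathcal{U}\)-large set correctly produces representatives of exact norm \(\|a\|\) without changing the ultralimit. (One implicit convention worth recording: for \(f\) with \(f(0)\neq 0\) the functional calculus takes place in the unitization, so the argument as written uses that the \(A_i\) are unital, which is the only case the paper needs.)
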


Finally we would like to discuss duality: for a normed space \(B\) we denote the dual normed space by \(B^*\) and for \(v\in B,f\in B^*\) we denote \(\langle v,f\rangle=f(v)\).\\
It is easy to see one has a canonical isometric embedding \(\Phi: \mathcal{U}\lim_i B^*_i\hookrightarrow (\mathcal{U}\lim B_i)^*\) described by the formula
\(\langle\mathcal{U}\lim v_i, \Phi(\mathcal{U}\lim f_i)\rangle=\mathcal{U}\lim \langle v_i,f_i\rangle\). Hereafter we shall omit \(\Phi\) from the notations.

\subsection{Ultralimits of C*-probability spaces and the G-equivariant case}\label{subsection Ultralimit of C*-probability spaces and the G-equivariant case}

We shall need to define the ultralimit of \(C^*\)-probability spaces (see Definition \ref{definition C probability space}):

\begin{definition}
Let \((A_i,\nu_i)_{i\in\mathcal{I}}\) be a collection of \(C^*\)-probability spaces. Define their ultralimit: 
\[(A,\nu)=\mathcal{U}\lim(A_i,\nu_i):=\big(\mathcal{U}\lim A_i,\mathcal{U}\lim \nu_i)_{red}\]
\end{definition}
Given \((a_i)\in\ell^{\infty}(\mathcal{I}, \{A_i\})\) we will denote by \(\mathcal{U}\lim a_i\) the element of \(A\) which is the class of  \(\mathcal{U}\lim a_i\in \mathcal{U}\lim A_i\) in \(A=\frac{\mathcal{U}\lim A_i}{\mathcal{N}_{\mathcal{U}\lim\nu_i}}\).

\begin{lemma}\label{Lemma propetries of ultralimit of C prob spaces}
Let \((A_i,\nu_i)\) be \(C^*\)-probability spaces, \((A,\nu)=\mathcal{U}\lim (A_i,\nu_i)\).
\begin{enumerate}
    \item 
If \(a_i\in A_i, ||a_i||\leq M, f\in C(\overline{D}(0,M))\) then:
\begin{itemize}
    \item
    \(f(\mathcal{U}\lim a_i)=\mathcal{U}\lim f(a_i)\)
    \item
    \(\nu(\mathcal{U}\lim a_i)=\mathcal{U}\lim \nu_i(a_i)\)
\end{itemize}
\item
If \(\eta_i\) are states on \(A_i\) with \(\eta_i\ll^b\nu_i\) and \(\sup ||\frac{d\eta_i}{d\nu_i}||\leq M\), then on \(\mathcal{U}\lim A_i\) we have \(\mathcal{U}\lim \eta_i \ll^b \mathcal{U}\lim \nu_i\). In particular \(\mathcal{U}\lim \eta_i\) induces a probability measure \(\eta\) on \(A\) and \(\frac{d\eta}{d\nu}=\mathcal{U}\lim \frac{d\eta_i}{d\nu_i}\)
\end{enumerate}
\end{lemma}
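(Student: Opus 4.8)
The plan is to prove Lemma \ref{Lemma propetries of ultralimit of C prob spaces} in two parts, treating items 1 and 2 in order, and reducing everything to the facts already established about ultralimits of $C^*$-algebras (Lemmas \ref{Lemma Ulim is C algebra}, \ref{Lemma continous calculus and ultralimit}) and about the quotient $A = (\mathcal{U}\lim A_i)/\mathcal{N}_{\mathcal{U}\lim\nu_i}$.

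For item 1, the first bullet $f(\mathcal{U}\lim a_i) = \mathcal{U}\lim f(a_i)$ is immediate from Lemma \ref{Lemma continous calculus and ultralimit}(2) applied in $\mathcal{U}\lim A_i$, followed by pushing down to the quotient $A$ via the (continuous, $*$-homomorphic) projection $q:\mathcal{U}\lim A_i \to A$, using that $q$ commutes with continuous functional calculus since it is a unital $*$-homomorphism. For the second bullet, observe that $\nu = \overline{\mathcal{U}\lim\nu_i}$ is by construction the state induced on $A$ by $\mathcal{U}\lim\nu_i \in (\mathcal{U}\lim A_i)^*$, so $\nu(q(x)) = (\mathcal{U}\lim\nu_i)(x)$ for all $x$; then by the duality pairing $\langle \mathcal{U}\lim a_i, \mathcal{U}\lim\nu_i\rangle = \mathcal{U}\lim\langle a_i,\nu_i\rangle = \mathcal{U}\lim \nu_i(a_i)$ discussed at the end of subsection \ref{subsection Ultralimit of Banach spaces and C* algebras}. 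This is a routine unwinding of definitions.

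For item 2, I would first produce the candidate Radon-Nikodym derivative. Set $c_i = \frac{d\eta_i}{d\nu_i} \in (A_i)_+$ with $\|c_i\| \le M$, and let $c = \mathcal{U}\lim c_i \in (\mathcal{U}\lim A_i)_+$ (positivity by Lemma \ref{Lemma continous calculus and ultralimit}(1)), then push down to $\bar c = q(c) \in A_+$. The key verification is that $\mathcal{U}\lim\eta_i = \bar c \cdot \nu$ as functionals on $A$, equivalently that $(\mathcal{U}\lim\eta_i)(x) = (\mathcal{U}\lim\nu_i)(c\cdot x)$ for every $x = \mathcal{U}\lim x_i \in \mathcal{U}\lim A_i$ with $\sup\|x_i\| < \infty$. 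Both sides equal $\mathcal{U}\lim$ of the $i$-th scalar: the left side is $\mathcal{U}\lim \eta_i(x_i)$ by item 1, while the right side is $\mathcal{U}\lim \nu_i(c_i x_i) = \mathcal{U}\lim \eta_i(x_i)$ since $\eta_i = c_i\cdot\nu_i$ in each $A_i$. This gives $\mathcal{U}\lim\eta_i \ll^b \mathcal{U}\lim\nu_i$ on $\mathcal{U}\lim A_i$ with derivative $c$; since $\mathcal{N}_{\mathcal{U}\lim\nu_i} \subset \mathcal{N}_{\mathcal{U}\lim\eta_i}$, the functional $\mathcal{U}\lim\eta_i$ descends to a state $\eta$ on $A$, and $\eta = \bar c\cdot\nu$ because $q$ intertwines the module actions. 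Finally $\frac{d\eta}{d\nu} = \bar c = \mathcal{U}\lim\frac{d\eta_i}{d\nu_i}$ by uniqueness of the Radon-Nikodym derivative against the non-degenerate state $\nu$ (from the \emph{Notations and Recollections} in subsection \ref{subsubsection C*-algebras and probabilty spaces}).

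The only genuinely delicate point is the passage between $\mathcal{U}\lim A_i$ and its reduced quotient $A$: one must check that $\mathcal{N}_{\mathcal{U}\lim\nu_i} \subset \mathcal{N}_{\mathcal{U}\lim\eta_i}$ so that $\eta$ is well-defined on $A$, and that the resulting $\bar c$ is the \emph{unique} positive element implementing $\eta$ over $\nu$ (needed to identify it with $\mathcal{U}\lim c_i$'s image). Both follow formally: the inclusion of null ideals from $\eta_i = c_i\cdot\nu_i$ together with $\sup\|c_i\| < \infty$, and uniqueness from non-degeneracy of $\nu$, which holds by construction of the reduced version. I expect no essential obstacle beyond bookkeeping; the substantive content — that continuous functional calculus, states, and bounded-absolute-continuity all pass to ultralimits — is already packaged in the preceding lemmas, and item 2 is just the assembly of those pieces.
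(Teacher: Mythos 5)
Your proof is correct and follows essentially the same route as the paper: item 1 is reduced to the functional-calculus lemma for ultralimits, and item 2 rests on the identity \((\mathcal{U}\lim\nu_i)\bigl(a\cdot\mathcal{U}\lim\tfrac{d\eta_i}{d\nu_i}\bigr)=\mathcal{U}\lim\nu_i\bigl(a_i\cdot\tfrac{d\eta_i}{d\nu_i}\bigr)=\mathcal{U}\lim\eta_i(a_i)\), which is exactly the paper's one-line computation. Your additional bookkeeping about descending to the reduced quotient and the uniqueness of the derivative is a correct elaboration of what the paper leaves implicit.
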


\begin{proof}
\begin{enumerate}
    \item Obvious from Lemma \ref{Lemma continous calculus and ultralimit}.
    \item Obvious since for \(a=\mathcal{U}\lim a_i\in\mathcal{U}\lim A_i\) we have: 
\[(\mathcal{U}\lim\nu_i)(a\cdot\: \mathcal{U}\lim\frac{d\eta_i}{d\nu_i})=\mathcal{U}\lim \nu_i\big(a_i\cdot\:\frac{d\eta_i}{d\nu_i}\big)=\mathcal{U}\lim \eta_i(a_i)=(\mathcal{U}\lim \eta_i)(\mathcal{U}\lim a_i)\]
\end{enumerate}
\end{proof}

\begin{lemma}\label{Lemma functoriality C spaces}
One has the following:
\begin{enumerate}
    \item Let \((A_i,\nu_i)\) and \((B_i,m_i)\) be \(C^*\)-probability spaces and \(T_i:(B_i,m_i)\to(A_i,\nu_i)\) be factors. Then there is a factor map \(T=\mathcal{U}\lim T_i:\mathcal{U}\lim (B_i,m_i)\to \mathcal{U}\lim (A_i,\nu_i)\) satisfying \(T(\mathcal{U}\lim b_i)=\mathcal{U}\lim T_i(b_i)\).
    \item
    For any \(C^*\)-probability space \((A,\nu)\) the diagonal mapping \(\Delta(a)=\mathcal{U}\lim_i a\) defines a factor \(\Delta:(A,\nu)\to\mathcal{U}\lim_i (A,\nu)\).
\end{enumerate}
\end{lemma}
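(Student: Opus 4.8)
The plan is to verify each of the two items by unwinding the definitions of ultralimit of $C^*$-algebras (from subsection \ref{subsection Ultralimit of Banach spaces and C* algebras}) and of ultralimit of $C^*$-probability spaces (which is the reduced version of the pair $(\mathcal{U}\lim A_i, \mathcal{U}\lim \nu_i)$). For item (1), the natural candidate for $T = \mathcal{U}\lim T_i$ is the map sending $\mathcal{U}\lim b_i$ to $\mathcal{U}\lim T_i(b_i)$. First I would check this is well defined on the level of $\ell^\infty$-spaces: the assignment $(b_i) \mapsto (T_i(b_i))$ is clearly a $*$-homomorphism $\ell^\infty(\mathcal{I},\{B_i\}) \to \ell^\infty(\mathcal{I},\{A_i\})$, and since each $T_i$ is a factor map it is an isometry, so $\|T_i(b_i)\|_{A_i} = \|b_i\|_{B_i}$; hence $\mathcal{U}\lim \|T_i(b_i)\| = \mathcal{U}\lim \|b_i\|$, which shows $\mathcal{N}_{\mathcal{U}}$ (for the $B_i$'s) is mapped into $\mathcal{N}_{\mathcal{U}}$ (for the $A_i$'s) and descends to a $*$-homomorphism $\mathcal{U}\lim B_i \to \mathcal{U}\lim A_i$ of the Banach-space ultralimits.

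Next I would promote this to a map of the \emph{reduced} spaces. The compatibility $T_i^* \nu_i = m_i$ gives, for $b = \mathcal{U}\lim b_i$, that $(\mathcal{U}\lim \nu_i)\big((T_i(b_i))_i\big) = \mathcal{U}\lim \nu_i(T_i(b_i)) = \mathcal{U}\lim m_i(b_i) = (\mathcal{U}\lim m_i)\big((b_i)_i\big)$, using item (1) of Lemma \ref{Lemma propetries of ultralimit of C prob spaces} to pass the state through the ultralimit. Thus the pullback of $\mathcal{U}\lim \nu_i$ under the map on Banach ultralimits is exactly $\mathcal{U}\lim m_i$; in particular the null ideal $\mathcal{N}_{\mathcal{U}\lim m_i}$ is carried into $\mathcal{N}_{\mathcal{U}\lim \nu_i}$, so the map passes to the quotients $(B,m) = (\mathcal{U}\lim B_i, \mathcal{U}\lim m_i)_{red} \to (A,\nu) = (\mathcal{U}\lim A_i, \mathcal{U}\lim \nu_i)_{red}$, and by construction it is still a $*$-homomorphism intertwining the states, i.e. a factor map. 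The formula $T(\mathcal{U}\lim b_i) = \mathcal{U}\lim T_i(b_i)$ holds by construction, recalling the convention (stated just after the definition of ultralimit of $C^*$-probability spaces) that $\mathcal{U}\lim b_i$ denotes the class in the reduced quotient.

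For item (2), I would apply item (1) to the constant system $B_i = A_i = A$, $m_i = \nu_i = \nu$, with $T_i = \mathrm{id}_A$: this yields a factor map $\mathcal{U}\lim \mathrm{id}_A$ from $\mathcal{U}\lim(A,\nu)$ to itself, but more usefully I would directly consider the diagonal $*$-homomorphism $a \mapsto (a)_{i} $ from $A$ into $\ell^\infty(\mathcal{I},\{A\})$, note it is isometric, compose with the quotient to $\mathcal{U}\lim A$, then with the further quotient to the reduced space, and check $\nu$-preservation exactly as above, since $(\mathcal{U}\lim \nu)((a)_i) = \mathcal{U}\lim \nu(a) = \nu(a)$. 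Composing with the reduction quotient might a priori kill something, but $\nu$-preservation forces the composite $\Delta$ to remain injective (a factor map of $C^*$-probability spaces is automatically injective, as remarked after Definition \ref{definition C probabilty space}), so no collapse occurs.

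I do not expect a serious obstacle here; the only mild subtlety is bookkeeping the two-step quotient (first by $\mathcal{N}_{\mathcal{U}}$ to form the Banach/$C^*$ ultralimit, then by $\mathcal{N}_{\mathcal{U}\lim\nu_i}$ to reduce), and making sure the intertwining identity for states is applied \emph{before} reducing so that the null ideals are nested in the right direction. Everything else is a direct consequence of the fact that factor maps are isometric $*$-homomorphisms and of Lemma \ref{Lemma propetries of ultralimit of C prob spaces}(1), which lets states commute with ultralimits.
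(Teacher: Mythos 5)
Your proposal is correct and follows essentially the same route as the paper: define the map componentwise on $\ell^\infty(\mathcal{I},\{B_i\})$, use the intertwining $T_i^*\nu_i=m_i$ to see that the pulled-back state agrees with $\mathcal{U}\lim m_i$, and conclude that the map descends to the reduced quotients; the diagonal case is handled identically. The only cosmetic difference is that you track the two quotient steps (first by $\mathcal{N}_{\mathcal{U}}$, then by the null ideal of the limit state) separately, whereas the paper factors through the single projection $p:\ell^\infty(\mathcal{I},\{B_i\})\to\mathcal{U}\lim(B_i,m_i)$ in one stroke.
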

\begin{proof}
\begin{enumerate}
    \item
    The mapping \(T:\ell^\infty(\mathcal{I},{B_i})\to \mathcal{U}\lim (A_i,\nu_i)\) given by \(T((b_i))=\mathcal{U}\lim T_i(b_i)\) is a \(^*\)-homomorphism. As  
    \(T^*(\mathcal{U}\lim \nu_i)((b_i))=(\mathcal{U}\lim \nu_i)(\mathcal{U}\lim T_i(b_i))=p^*(\mathcal{U}\lim m_i)((b_i))\) 
    where \(p\) is the projection \(p:\ell^\infty(\mathcal{I},{B_i})\to \mathcal{U}\lim (B_i,m_i)\), we conclude that \(T\) gives rise to a factor map \(T:\mathcal{U}\lim (B_i,m_i)\to \mathcal{U}\lim (A_i,\nu_i)\) with the desired property.
    \item
    \(\Delta\) is an \(*\)-homomorphism \(\Delta: A\to \frac{\mathcal{U}\lim A}{\mathcal{N}_\nu}\) and \(\Delta^*\big(\mathcal{U}\lim_i\nu\big)(a)=\mathcal{U}\lim_i \nu(a)=\nu(a)\).
\end{enumerate}
\end{proof}
Let \(G\) be a discrete countable group. \\
Consider a collection \(\big(\mathcal{A}_i\big)_{i\in\mathcal{I}}\) of BQI \(G\)-\(C^*\)-spaces. We will say that \(\mathcal{A}_i\) are uniformly BQI, if there is a function \(M:G\to\mathbb{R}_{+}\) which is a majorant for all of them (Recall the definition of a majorant from Definition \ref{definition BQI G C space}).\\
Given such a collection, we next define its ultralimit: 
\begin{definition}
Let \(\big(\mathcal{A}_i\big)_{i\in\mathcal{I}}\) be uniformly BQI \(G\)-\(C^*\)-spaces. Consider \(\mathcal{A}=\mathcal{U}\lim \mathcal{A}_i\). By Functoriality (Lemma \ref{Lemma functoriality C spaces}), we get a natural \(G\)-\(C^*\)-space structure on \(\mathcal{A}\), such that: when \(a_i\in A_i\) are uniformly bounded then \({}^g\mathcal{U}\lim a_i=\mathcal{U}\lim {}^ga_i\).  We will say that \(\mathcal{A}\) is the \emph{ultra-limit} of the uniformly BQI \(G\)-\(C^*\)-spaces \(\mathcal{A}_i\).
\end{definition}

\begin{lemma}\label{Lemma RN cocyle in C ultralimit}
Let \(\big(\mathcal{A}_i\big)_{i\in\mathcal{I}}\) be uniformly  \(C^*\)-BQI-\(G\)-spaces, and let \(\mathcal{A}=(A,\nu)=\mathcal{U}\lim (A_i,\nu_i)\).\\
Let \(\mu\) be a probability measure on \(G\) and \(a_i\in A_i\) be uniformly bounded elements. Then:
\begin{enumerate}
    \item For any \(g\in G\) we have: \(R_{\mathcal{A}}(g)=\mathcal{U}\lim R_{\mathcal{A}_i}(g)\).
    \item
    \(\mu\ast\mathcal{U}\lim a_i=\mathcal{U}\lim \mu\ast a_i\)
    \item
    \((\mu\ast\nu)(\mathcal{U}\lim a_i)=\mathcal{U}\lim (\mu\ast\nu_i)(a_i)\)
\end{enumerate}
\end{lemma}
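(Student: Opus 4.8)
The plan is to verify the three claims in order, each reducing to the already-established behaviour of ultralimits under continuous functional calculus (Lemma \ref{Lemma continous calculus and ultralimit}) and the formula $\nu(\mathcal{U}\lim a_i) = \mathcal{U}\lim \nu_i(a_i)$ from Lemma \ref{Lemma propetries of ultralimit of C prob spaces}. For item (1), I would start from the defining property of the Radon–Nikodym cocycle: $R_{\mathcal{A}_i}(g) = \frac{d\,{}^g\nu_i}{d\nu_i}$ is the unique positive element of $A_i$ with ${}^g\nu_i = R_{\mathcal{A}_i}(g)\cdot\nu_i$. Because the $\mathcal{A}_i$ are uniformly BQI with common majorant $M$, the elements $R_{\mathcal{A}_i}(g)$ are uniformly bounded (by $e^{M(g)}$) and uniformly bounded below away from $0$ (by $e^{-M(g)}$), so $r_g := \mathcal{U}\lim R_{\mathcal{A}_i}(g)$ is a well-defined positive invertible element of $\mathcal{U}\lim A_i$, and by Lemma \ref{Lemma propetries of ultralimit of C prob spaces}(2) it descends to a positive element of $A$. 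It then suffices to check ${}^g\nu = r_g \cdot \nu$ in $A$, i.e. that $({}^g\nu)(\mathcal{U}\lim a_i) = (\mathcal{U}\lim\nu_i)\big((\mathcal{U}\lim a_i)(\mathcal{U}\lim R_{\mathcal{A}_i}(g))\big)$ for uniformly bounded $(a_i)$; using ${}^g\nu = ({}^{g^{-1}})^*\nu$ together with the equivariance ${}^g\mathcal{U}\lim a_i = \mathcal{U}\lim{}^g a_i$ and Lemma \ref{Lemma propetries of ultralimit of C prob spaces}(1), both sides equal $\mathcal{U}\lim \nu_i\big(R_{\mathcal{A}_i}(g)\, a_i\big)$. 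By uniqueness of the Radon–Nikodym derivative (the $red$-construction makes $\nu$ non-degenerate), $R_{\mathcal{A}}(g) = r_g$.

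For item (2), I would just expand the definition of the convolution action: $\mu\ast a = \sum_{g\in G}\mu(g)\,{}^g a$. When $\mu$ has finite support this is a finite sum and the identity $\mu\ast\mathcal{U}\lim a_i = \mathcal{U}\lim\mu\ast a_i$ is immediate from additivity of $\mathcal{U}\lim$ and the equivariance property ${}^g\mathcal{U}\lim a_i=\mathcal{U}\lim{}^g a_i$. For general $\mu$ one writes $\mu\ast a_i = \sum_g \mu(g)\,{}^g a_i$ as a norm-convergent series with $\|\mu(g)\,{}^g a_i\|\le \mu(g)\sup_i\|a_i\|$ and $\sum_g \mu(g)\sup_i\|a_i\|<\infty$, so Lemma \ref{Lemma Ulim is banach}(2) lets us interchange $\mathcal{U}\lim_i$ with the sum. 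Item (3) is then the combination of (2) with $\nu(\mathcal{U}\lim b_i)=\mathcal{U}\lim\nu_i(b_i)$ (Lemma \ref{Lemma propetries of ultralimit of C prob spaces}(1)) applied to $b_i = \mu\ast a_i$: $(\mu\ast\nu)(\mathcal{U}\lim a_i) = \nu(\mu\ast\mathcal{U}\lim a_i) = \nu(\mathcal{U}\lim\mu\ast a_i) = \mathcal{U}\lim\nu_i(\mu\ast a_i) = \mathcal{U}\lim(\mu\ast\nu_i)(a_i)$; one should note $\mu\ast\nu$ is well defined on $(\mathcal{U}\lim A_i,\mathcal{U}\lim\nu_i)_{red}$ by the remark preceding this subsection, since $\mathcal{N}_{\mathcal{U}\lim\nu_i}$ is $G$-invariant.

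The only genuinely delicate point — and the step I expect to take the most care — is item (1): one must be sure that passing to the reduced quotient $(A,\nu) = (\mathcal{U}\lim A_i, \mathcal{U}\lim\nu_i)_{red}$ does not disturb the identification, i.e. that the image of $\mathcal{U}\lim R_{\mathcal{A}_i}(g)$ in $A$ really is the Radon–Nikodym cocycle of the quotient $G$-$C^*$-space and not merely \emph{a} Radon–Nikodym derivative on the non-reduced algebra. This is handled by the uniform majorant, which guarantees invertibility is preserved in the quotient, together with the uniqueness clause of the Radon–Nikodym derivative for non-degenerate states recorded in the Notations and Recollections; everything else is a routine unwinding of definitions.
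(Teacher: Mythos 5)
Your proposal is correct and follows essentially the same route as the paper: item (1) is exactly an application of Lemma \ref{Lemma propetries of ultralimit of C prob spaces}(2) with $\eta_i={}^g\nu_i$ (you re-derive that lemma's content, including the uniqueness step for the non-degenerate reduced state, which is fine), and items (2)--(3) rest on Lemma \ref{Lemma Ulim is banach}(2) with the bound $\|\mu(g)\,{}^g a_i\|\le\mu(g)\sup_i\|a_i\|$, just as in the paper. The only cosmetic difference is that the paper obtains (3) by interchanging the sum with $\mathcal{U}\lim$ on the dual side using $\|\mu(g)\cdot{}^g\nu\|\le\mu(g)$, whereas you deduce (3) from (2) via the pairing; both are valid.
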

\begin{proof}
Item (1) is immediate from Lemma \ref{Lemma propetries of ultralimit of C prob spaces}, whereas items (2) and (3) are immediate from Lemma \ref{Lemma Ulim is banach} since \(||\mu(g) {}^ga_i||\leq \mu(g)(\sup_{i}||a_i||)\:,\: ||\mu(g)\cdot {}^g\nu||\leq \mu(g)\) and \(\sum_g \mu(g)\) is a convergent sum.
\end{proof}

\begin{lemma}\label{Lemma functoriality G C spaces}
We have the following:
\begin{enumerate}
    \item Let \((A_i,\nu_i)\) and \((B_i,m_i)\) be uniformly BQI \(G\)-\(C^*\)-spaces and \(T_i:(B_i,m_i)\to(A_i,\nu_i)\) be factors. Then there is a factor map \(T=\mathcal{U}\lim T_i:\mathcal{U}\lim (B_i,m_i)\to \mathcal{U}\lim (A_i,\nu_i)\) satisfying \(T(\mathcal{U}\lim b_i)=\mathcal{U}\lim T_i(b_i)\).\\
    If each \(T_i\) is measure preserving extension then so is \(T\).
    \item
    For any BQI \(G\)-\(C^*\)-space \((A,\nu)\), the diagonal mapping \(\Delta(a)=\mathcal{U}\lim_i a\) defines a measure preserving extension \(\Delta:(A,\nu)\to\mathcal{U}\lim_i (A,\nu)\).
\end{enumerate}
\end{lemma}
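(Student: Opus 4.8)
The plan is to reduce everything to the non-equivariant functoriality already established in Lemma \ref{Lemma functoriality C spaces}, together with the compatibility of the Radon--Nikodym cocycle with ultralimits from Lemma \ref{Lemma RN cocyle in C ultralimit}. Throughout I would be disciplined about representing elements of a reduced ultralimit by \emph{uniformly bounded} families $(x_i)$, since that is what makes all the $\mathcal{U}\lim$-expressions below legitimate.

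For part (1): First I would apply Lemma \ref{Lemma functoriality C spaces}(1) to obtain a factor of $C^*$-probability spaces $T=\mathcal{U}\lim T_i:\mathcal{U}\lim(B_i,m_i)\to\mathcal{U}\lim(A_i,\nu_i)$ with $T(\mathcal{U}\lim b_i)=\mathcal{U}\lim T_i(b_i)$; it then remains only to check $G$-equivariance and, under the extra hypothesis, the measure-preserving property. For equivariance, every element of $\mathcal{U}\lim(B_i,m_i)$ is represented as $\mathcal{U}\lim b_i$ with $(b_i)$ uniformly bounded, and since each $T_i$ and each ${}^g$ is an isometric $*$-isomorphism, both $({}^g b_i)$ and $({}^g T_i(b_i))$ are again uniformly bounded; using the defining property ${}^g\mathcal{U}\lim a_i=\mathcal{U}\lim{}^g a_i$ of the $G$-action on ultralimits and the equivariance of each $T_i$ one computes ${}^gT(\mathcal{U}\lim b_i)={}^g\mathcal{U}\lim T_i(b_i)=\mathcal{U}\lim{}^gT_i(b_i)=\mathcal{U}\lim T_i({}^gb_i)=T(\mathcal{U}\lim{}^gb_i)=T({}^g\mathcal{U}\lim b_i)$. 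For the measure-preserving claim, the uniform BQI hypothesis (Definition \ref{definition BQI G C space}) supplies a common majorant $M:G\to\mathbb{R}_+$, so $\|R_{\mathcal{B}_i}(g)\|\le e^{M(g)}$ uniformly in $i$; by Lemma \ref{Lemma RN cocyle in C ultralimit}(1) we have $R_{\mathcal{B}}(g)=\mathcal{U}\lim R_{\mathcal{B}_i}(g)$ and $R_{\mathcal{A}}(g)=\mathcal{U}\lim R_{\mathcal{A}_i}(g)$, whence $T(R_{\mathcal{B}}(g))=\mathcal{U}\lim T_i(R_{\mathcal{B}_i}(g))=\mathcal{U}\lim R_{\mathcal{A}_i}(g)=R_{\mathcal{A}}(g)$, which is exactly the measure-preserving condition.

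For part (2): I would simply specialize part (1) to the constant family $\mathcal{A}_i\equiv(A,\nu)$ with $T_i=\mathrm{id}$, which is trivially a measure-preserving extension of uniformly BQI spaces; the induced factor $T$ is then the diagonal map $\Delta(a)=\mathcal{U}\lim_i a$, and part (1) gives that it is a measure-preserving $G$-factor. (One can equally well verify this by hand: $\Delta$ is a $C^*$-probability factor by Lemma \ref{Lemma functoriality C spaces}(2), it is equivariant since ${}^g\Delta(a)=\mathcal{U}\lim_i{}^ga=\Delta({}^ga)$, and $\Delta(R_{\mathcal{A}}(g))=\mathcal{U}\lim_i R_{\mathcal{A}}(g)=R_{\mathcal{U}\lim(A,\nu)}(g)$ by Lemma \ref{Lemma RN cocyle in C ultralimit}(1) applied to the constant family.) I do not anticipate any genuine obstacle; the whole content is a diagram chase, and the only point requiring attention is ensuring that each family whose ultralimit is taken is uniformly bounded --- guaranteed by the uniform BQI assumption for the cocycles and by the isometry of $*$-isomorphisms and factor maps for their images.
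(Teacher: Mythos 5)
Your proof is correct and follows essentially the same route as the paper: reduce to the non-equivariant functoriality of Lemma \ref{Lemma functoriality C spaces}, check $G$-equivariance on uniformly bounded representatives, and verify the measure-preserving condition via $R_{\mathcal{B}}(g)=\mathcal{U}\lim R_{\mathcal{B}_i}(g)$ from Lemma \ref{Lemma RN cocyle in C ultralimit}. One small inaccuracy: part (2) is \emph{not} a specialization of part (1) to the constant family with $T_i=\mathrm{id}$, since that would produce the identity map $\mathcal{U}\lim(A,\nu)\to\mathcal{U}\lim(A,\nu)$ rather than the diagonal $\Delta:(A,\nu)\to\mathcal{U}\lim_i(A,\nu)$; however, your parenthetical hand-verification (equivariance of $\Delta$ plus $\Delta(R_\nu(g))=\mathcal{U}\lim_i R_\nu(g)=R_{\mathcal{U}\lim\nu}(g)$) is exactly the paper's argument and suffices.
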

\begin{proof}
Note that \(T,\Delta\) from Lemma \ref{Lemma functoriality C spaces} are \(G\)-equivariant. Also if all \(T_i\) are measure preserving extensions then \(T(R_{\mathcal{U}\lim m_i}(g))=T(\mathcal{U}\lim R_{m_i}(g))=\mathcal{U}\lim T_i(R_{m_i}(g))=\mathcal{U}\lim R_{\nu_i}(g)=R_{\nu}(g)\).\\
Since \(\Delta(R_{\nu}(g))=\mathcal{U}\lim R_{\nu}(g)=R_{\mathcal{U}\lim\nu}(g)\) we get that \(\Delta\) is a measure preserving extension. 
\end{proof}

\subsection{Ultralimits for G-spaces via algebras}\label{subsection Ultralimit for G-spaces via algebras}

\begin{definition}\label{Definition ultralimit of G measure spaces}
Let \(\mathcal{X}_i=(X_i,\nu_i)\:\: (i\in\mathcal{I})\) be a collection of uniformly BQI \(G\)-spaces, in the sense that there is a uniform majorant for the whole family. 
Consider \(\mathcal{A}_i=(L^\infty(X_i),
\nu_i)\), which are uniformly BQI \(G\)-\(C^*\)-algebras, and let \(\mathcal{A}=\mathcal{U}\lim \mathcal{A}_i=(A,\nu)\). The Gelfand space \(X\) for \(A\) is a BQI \(G\)-space that will be denoted by \(\mathcal{U}\lim_{big} \mathcal{X}_i\). We call it a version of the ultralimit of the actions \(\mathcal{X}_i\).\\
The RN factor of \(\mathcal{A}\) will be denoted by \(\mathcal{U}\lim_{RN} \mathcal{X}_i\), we consider it is using the canonical topological RN model of Proposition \ref{Proposition RN model for RN factor}, that is, \(C(\mathcal{U}\lim_{RN} \mathcal{X}_i)=\mathcal{A}_{RN}\).
\end{definition} 
Note that if \(\varphi_i\) are uniformly bounded functions on \(X_i\), then we have a well defined function \(\varphi\in L^\infty(X)\) which corresponds to \(\mathcal{U}\lim \varphi_i\in A\) (for which we keep this notation). \\
Lemma \ref{Lemma RN cocyle in C ultralimit} implies that \[\mathcal{U}\lim\frac{dg\nu_i}{d\nu_i}=\frac{dg\nu}{d\nu}\]
The basic result we shall need is that entropy behaves well under ultralimits:
\begin{lemma}\label{Lemma convergence of RN parameters}
Let \((X_i,\nu_i)\) be uniformly BQI \(G\)-spaces and \((X,\nu)=\mathcal{U}\lim_{big} (X_i,\nu_i)\).\\
If \(f\in C(0,\infty)\) is a continuous function, and \(g\in G\) then
\[    \mathcal{U}\lim \intop_{X_i}f(\frac{dg\nu_i}{d\nu_i}) \:d\nu_i= \intop_{X}f(\frac{dg\nu}{d\nu}) \:d\nu\]
\end{lemma}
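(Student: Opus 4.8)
The plan is to reduce the statement to the already-established facts that $\frac{dg\nu}{d\nu} = \mathcal{U}\lim \frac{dg\nu_i}{d\nu_i}$ (Lemma \ref{Lemma RN cocyle in C ultralimit}, item (1)) and that the state $\nu$ on the ultralimit evaluates as $\nu(\mathcal{U}\lim a_i) = \mathcal{U}\lim \nu_i(a_i)$ for uniformly bounded $(a_i)$ (Lemma \ref{Lemma propetries of ultralimit of C prob spaces}, item (1)). First I would fix a uniform majorant $M$ for the family $(X_i,\nu_i)$, so that for the fixed $g \in G$ we have $\|\ln R_{\nu_i}(g)\|_\infty \le M(g)$ uniformly in $i$; consequently the Radon--Nikodym derivatives $R_{\nu_i}(g) = \frac{dg\nu_i}{d\nu_i}$ all take values in a fixed compact interval $[e^{-M(g)}, e^{M(g)}] \subset (0,\infty)$, and in particular $\sup_i \|R_{\nu_i}(g)\| \le e^{M(g)} =: N$. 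This uniform bound is what allows the continuous functional calculus of Lemma \ref{Lemma propetries of ultralimit of C prob spaces} to be applied, and it is the reason the ``uniformly BQI'' hypothesis is present.

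Next I would observe that $f$, though only assumed continuous on $(0,\infty)$, is in particular continuous on the compact disc $\overline{D}(0,N)$ after an irrelevant modification off the relevant interval — more precisely, since all the elements $R_{\nu_i}(g)$ are self-adjoint with spectrum contained in $[e^{-M(g)}, N]$, only the values of $f$ on that interval matter, and we may replace $f$ by any continuous extension $\tilde f \in C(\overline{D}(0,N))$ agreeing with it there without changing either side. Then, setting $a_i := f(R_{\nu_i}(g)) \in A_i = L^\infty(X_i)$, these are uniformly bounded by $\sup_{[e^{-M(g)},N]} |f|$, and by item (1) of Lemma \ref{Lemma propetries of ultralimit of C prob spaces} (functional calculus commutes with the ultralimit) together with Lemma \ref{Lemma RN cocyle in C ultralimit},
\[
\mathcal{U}\lim a_i = \mathcal{U}\lim f(R_{\nu_i}(g)) = f\!\left(\mathcal{U}\lim R_{\nu_i}(g)\right) = f\!\left(\frac{dg\nu}{d\nu}\right).
\]
Applying the state $\nu$ and using again item (1) of Lemma \ref{Lemma propetries of ultralimit of C prob spaces} gives
\[
\intop_X f\!\left(\frac{dg\nu}{d\nu}\right) d\nu = \nu\!\left(\mathcal{U}\lim a_i\right) = \mathcal{U}\lim \nu_i(a_i) = \mathcal{U}\lim \intop_{X_i} f\!\left(\frac{dg\nu_i}{d\nu_i}\right) d\nu_i,
\]
which is exactly the claim.

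The only genuinely delicate point — and the one I would expect to be the main obstacle — is the bookkeeping around the passage from $f \in C(0,\infty)$ to a function on a compact set: one must be careful that the hypothesis genuinely pins the spectra of all the $R_{\nu_i}(g)$ into a common compact subinterval of $(0,\infty)$ bounded away from $0$, so that the choice of continuous extension is immaterial and no issue of $f$ blowing up near $0$ or $\infty$ arises. Everything else is a direct invocation of the functoriality and functional-calculus lemmas already proved; no new estimate is needed. (One should also note in passing that $\frac{dg\nu}{d\nu}$, being the ultralimit of positive elements with the stated bounds, is itself bounded away from $0$ and $\infty$, so $f(\frac{dg\nu}{d\nu})$ is unambiguously defined and lies in $L^\infty(X)$, and the integral on the left-hand side makes sense.)
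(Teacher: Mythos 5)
Your proposal is correct and follows essentially the same route as the paper's proof: fix a uniform majorant to pin the spectra of the $R_{\nu_i}(g)$ into a compact subinterval of $(0,\infty)$, then chain together $\mathcal{U}\lim\nu_i(f(\cdot))=\nu(\mathcal{U}\lim f(\cdot))$, the compatibility of functional calculus with ultralimits, and $\mathcal{U}\lim R_{\nu_i}(g)=R_{\nu}(g)$. The only difference is that you spell out the (harmless) extension of $f$ to a continuous function on a compact disc, a point the paper leaves implicit.
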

\begin{proof}
Let \(M\) be a uniform majorant for \(\mathcal{X}_i\), then \(e^{-M(g)}\leq \frac{dg\nu_i}{d\nu_i}\leq e^{M(g)}\). From Lemma \ref{Lemma propetries of ultralimit of C prob spaces} we get:
\begin{multline*}
    \mathcal{U}\lim \intop_{X_i}f(\frac{dg\nu_i}{d\nu_i}) \:d\nu_i=\mathcal{U}\lim\: \nu_i\big(f(\frac{dg\nu_i}{d\nu_i})\big)=\nu\Big(\mathcal{U}\lim f(\frac{dg\nu_i}{d\nu_i})\Big)=\\
    \nu\Big( f\big(\mathcal{U}\lim\frac{dg\nu_i}{d\nu_i}\big)\Big)=
    \nu\Big( f\big(\frac{dg\nu}{d\nu}\big)\Big)=\intop_{X}f(\frac{dg\nu}{d\nu}) \:d\nu
\end{multline*}
\end{proof}
\begin{corollary}\label{corollary convergence of entropy}
Let \((X_i,\nu_i)\) be uniformly BQI \(G\)-spaces and \((X,\nu)=\mathcal{U}\lim_{big} (X_i,\nu_i)\).
Let \(f\) be a convex function with \(f(1)=0\), \(\lambda\) a finitely supported probability measure on \(G\). Then we have:
\[\mathcal{U}\lim h_{\lambda,f}(X_i,\nu_i)= h_{\lambda,f}(X,\nu)\]
\end{corollary}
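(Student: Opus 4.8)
The plan is to reduce the statement directly to Lemma~\ref{Lemma convergence of RN parameters}. Recall that for a finitely supported probability measure \(\lambda\) on \(G\) and a convex \(f\) with \(f(1)=0\) one has
\[h_{\lambda,f}(X,\nu)=\sum_{g\in\mathrm{Supp}(\lambda)}\lambda(g)\,D_f(g^{-1}\nu\|\nu)=\sum_{g\in\mathrm{Supp}(\lambda)}\lambda(g)\intop_{X}f\Big(\tfrac{dg^{-1}\nu}{d\nu}\Big)\,d\nu,\]
and similarly for each \((X_i,\nu_i)\). Since \(\mathrm{Supp}(\lambda)\) is finite and \(\mathcal{U}\lim\) is linear (and respects finite sums of bounded sequences, by Lemma~\ref{Lemma Ulim is banach}), it suffices to show for each fixed \(g\in\mathrm{Supp}(\lambda)\) that
\[\mathcal{U}\lim_i\intop_{X_i}f\Big(\tfrac{dg^{-1}\nu_i}{d\nu_i}\Big)\,d\nu_i=\intop_{X}f\Big(\tfrac{dg^{-1}\nu}{d\nu}\Big)\,d\nu.\]

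First I would dispose of the integrability bookkeeping. Because the family \(\mathcal{X}_i\) is uniformly BQI with majorant \(M\), we have \(e^{-M(g^{-1})}\le \tfrac{dg^{-1}\nu_i}{d\nu_i}\le e^{M(g^{-1})}\) for all \(i\), and by Lemma~\ref{Lemma RN cocyle in C ultralimit}(1) the same two-sided bound holds for \(\tfrac{dg^{-1}\nu}{d\nu}\) on \(X\). Hence all the Radon--Nikodym derivatives involved take values in a fixed compact interval \([e^{-M(g^{-1})},e^{M(g^{-1})}]\subset(0,\infty)\), on which \(f\) — being convex on \((0,\infty)\) — is continuous and bounded. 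In particular every \(f\)-divergence appearing is finite, so the quantities \(h_{\lambda,f}(X_i,\nu_i)\) and \(h_{\lambda,f}(X,\nu)\) are all finite and the ultralimit on the left-hand side is a genuine (finite) limit in \(\mathbb{R}\), not merely in \(\overline{\mathbb{R}}\).

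With the values confined to a compact interval, the per-\(g\) identity is exactly Lemma~\ref{Lemma convergence of RN parameters} applied to a continuous function on \((0,\infty)\): one only needs a continuous \(f\) there, and convexity gives us that for free on the relevant domain. (If one wants \(f\) literally continuous on all of \((0,\infty)\), one may replace \(f\) outside \([e^{-M(g^{-1})},e^{M(g^{-1})}]\) by an affine extension without changing any of the integrals, since all arguments lie in that interval.) Summing the resulting equalities against \(\lambda(g)\) over the finite set \(\mathrm{Supp}(\lambda)\), and pulling the finite sum and the scalars through \(\mathcal{U}\lim\), yields \(\mathcal{U}\lim_i h_{\lambda,f}(X_i,\nu_i)=h_{\lambda,f}(X,\nu)\), as claimed.

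There is no real obstacle here; the only point requiring a moment's care is making sure that the convex function \(f\), a priori only defined and convex (hence continuous) on the open half-line, is being evaluated on arguments that stay uniformly inside a compact subinterval of \((0,\infty)\) — this is precisely what the uniform majorant guarantees — so that Lemma~\ref{Lemma convergence of RN parameters} and the continuous-functional-calculus machinery behind it (Lemma~\ref{Lemma propetries of ultralimit of C prob spaces}) apply verbatim, and so that the ultralimit of the entropies is finite and behaves like an ordinary limit under the finite sum defining \(h_{\lambda,f}\).
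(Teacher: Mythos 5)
Your proposal is correct and follows essentially the same route as the paper, which proves the corollary by applying Lemma \ref{Lemma convergence of RN parameters} to each \(g\) in the finite support of \(\lambda\) and invoking linearity; your additional remarks on the continuity of the convex \(f\) on \((0,\infty)\) and the finiteness guaranteed by the uniform majorant are just a careful spelling-out of why that lemma applies.
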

\begin{proof}
Follows from Lemma \ref{Lemma convergence of RN parameters} by linearity.
\end{proof}

\begin{remark}
As remarked in the introduction, in the sequel \cite{ultrAmenable} the theory of ultralimit of uniformly QI \(G\)-spaces will be developed from the measure theory point of view, and this is the reason for \enquote{big} in the notation \(\mathcal{U}\lim_{big} \mathcal{X}_i\).
\end{remark}
\begin{remark}
Using the continuity of \(\mathcal{U}\lim\) for sequences, one can deduce that the equality in Corollary \ref{corollary convergence of entropy} holds for more general \(\lambda\)'s by controlling the tail term. A concrete example is given in Remark \ref{corollary entropy KV using ultralimits}.
\end{remark}

\begin{remark}\label{remark ultralimits are via algebras}
In the paper \cite{conley_kechris_tucker-drob_2013} the authors considered a construction of ultralimit of measure preserving actions using a construction of ultralimit of probability spaces due to Loeb (for the case of finite probability spaces see \cite{elek2007limits}). Our usage of \(C^{*}\)-algebras is because it was the fastest and least technical way to be able to state and prove Theorem \ref{Theorem E}. However, it limits us to work with bounded functions and in particular to restrict ourselves to work with bounded-quasi-invariant actions (see definition \ref{definition BQI G C space}) and to make the assumption of uniformly bounded-quasi-invariant (see definition \ref{Definition ultralimit of G measure spaces}). This suffices for our for our paper.\\
The sequel paper \cite{ultrAmenable} elaborates on the ultralimit construction, and in particular develops a measure theoretic ultralimit for quasi-invariant actions as well, under a natural assumption of \enquote{uniformly quasi-invariance} (which is more technical). Another goal is to show the equivalence between the construction developed here and the measure theoretic one taken there. \\
We believe that this method of ultralimits could be intriguing and useful in further research in the ergodic theory of group actions.
\end{remark}

\section{The Furstenberg-Poisson boundary as an ultralimit}\label{section Furstenberg-Poisson boundary as ultralimit}
In this section we provide a novel construction of the Furstenberg-Poisson boundary \(\mathcal{B}(G,\mu)\) of a discrete countable group \(G\) together with a generating measure \(\mu\). 
\begin{lemma}\label{Lemma almost stationary implies BQI}
Assume \(\mu\) is a generating probability on \(G\) and \(C>0\). Let \(\nu\) is a measure on a measurable \(G\)-space \(X\) such that \(\mu\ast\nu\leq C\cdot\nu\). Then \((X,\nu)\) is BQI with a majorant depending only on \(C,\mu\).
\end{lemma}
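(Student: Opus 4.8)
The plan is to show that the single inequality $\mu \ast \nu \le C\cdot\nu$ forces a two-sided bound $e^{-M(g)} \le \frac{dg\nu}{d\nu} \le e^{M(g)}$ for every $g\in G$, with $M(g)$ depending only on $C$ and $\mu$. The starting observation is that since $\mu(e) =: p_0$ could be zero, I should first replace $\mu$ by the lazy version $\mu' = \frac12(\delta_e + \mu)$, which still satisfies $\mu' \ast \nu \le \frac{1+C}{2}\nu$, is still generating, and has $\mu'(e) \ge \frac12$; so without loss of generality $\mu(e) \ge \delta > 0$. Then from $\mu \ast \nu = \sum_{h} \mu(h)\, h\nu \le C\nu$ and positivity of each summand, for any fixed $g$ in the support of $\mu$ we get $\mu(g)\, g\nu \le C\nu$, i.e. $\frac{dg\nu}{d\nu} \le C/\mu(g)$ a.e. This gives an upper bound on generators.

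The next step is to propagate this to all of $G$ using the cocycle identity $R_\nu(gh) = {}^g R_\nu(h)\cdot R_\nu(g)$ (stated in the remark after Definition \ref{definition BQI G C space}): writing an arbitrary $g$ as a word $s_1 s_2 \cdots s_n$ in the support of $\mu$, one multiplies the corresponding bounds, obtaining $\frac{dg\nu}{d\nu} \le \prod_i (C/\mu(s_i))$, which depends only on the word. Hence one gets an upper majorant $M_+(g)$ by taking, say, $|g|$ times $\ln\big(C/\min_{s\in\mathrm{supp}(\mu)}\mu(s)\big)$ where $|g|$ is the word length with respect to $\mathrm{supp}(\mu)$ (finite since $\mu$ generates). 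For the lower bound, the clean trick is to note $\frac{dg\nu}{d\nu} = \big(\frac{dg^{-1}\nu}{d\nu}\big)^{-1}\circ g^{-1}$, so a lower bound on $\frac{dg\nu}{d\nu}$ follows from an upper bound on $\frac{dg^{-1}\nu}{d\nu}$, which we already have — giving $\frac{dg\nu}{d\nu} \ge 1/M_+(g^{-1})$. Thus $M(g) := \max\{\ln M_+(g), \ln M_+(g^{-1})\}$ works, and depends only on $C$ and $\mu$.

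The one genuine subtlety — the main obstacle — is the step "$\mu(g)\, g\nu \le C\nu$ implies $\frac{dg\nu}{d\nu} \le C/\mu(g)$", which requires knowing $g\nu \ll \nu$ in the first place (to even speak of the Radon–Nikodym derivative) and that the pointwise a.e. inequality between densities follows from the inequality between measures. The former: from $\mu\ast\nu \le C\nu$ and $\mu(g)>0$ one gets $g\nu \le (C/\mu(g))\nu$ as measures, which immediately yields $g\nu \ll \nu$; applying the same to $g^{-1}$ (once $g,g^{-1}$ are both expressed via the support, using that $\mathrm{supp}(\mu)$ generates and, after the laziness reduction, contains $e$) gives $\nu \ll g\nu$ as well, so $\nu$ is quasi-invariant and $R_\nu(g) = \frac{dg\nu}{d\nu}$ is well-defined and invertible in $L^\infty$. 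The latter is the standard fact that if $\alpha \le \beta$ as finite measures with $\alpha \ll \beta$ then $\frac{d\alpha}{d\beta} \le 1$ a.e. I would state these as routine measure-theoretic facts rather than belabor them. Once the two-sided bound on generators is in hand, the cocycle propagation is purely formal and gives the uniform-in-the-family majorant (since $M$ depends only on $C$ and $\mu$, the same $M$ serves any family $(X_i,\nu_i)$ satisfying the hypothesis with the same constants).
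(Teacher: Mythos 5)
Your argument is correct and is essentially the paper's own: the paper's entire proof is the one-line observation \(\mu(g)\frac{dg\nu}{d\nu}\le\frac{d(\mu\ast\nu)}{d\nu}\le C\), hence \(\frac{dg\nu}{d\nu}\le C/\mu(g)\), with the extension to all of \(G\) (cocycle propagation along words in the support, and inversion for the lower bound, exactly as you describe) left as clear. One correction to your reasoning: the laziness reduction does not buy you what you claim, since adjoining \(e\) to \(\mathrm{supp}(\mu)\) does not make \(g^{-1}\) expressible as a word in the support (for \(G=\mathbb{Z}\) the set \(\{0,1\}\) still only generates \(\mathbb{N}\) as a semigroup, and indeed for \(\mu=\delta_1\) one can build \(\nu\) with \(\mu\ast\nu\le 2\nu\) whose derivative \(\frac{d(-1)\nu}{d\nu}\) is unbounded, so the lemma is false under a merely group-generating hypothesis); what is actually needed, and what ``generating'' must mean here, is that \(\mathrm{supp}(\mu)\) generates \(G\) as a semigroup, in which case both \(g\) and \(g^{-1}\) are automatically products of support elements and the lazification is superfluous. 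A second, very minor point: the explicit majorant \(|g|\ln\bigl(C/\min_{s\in\mathrm{supp}(\mu)}\mu(s)\bigr)\) presupposes finite support; in general one should take, for each \(g\), a fixed word \(g=s_1\cdots s_n\) in the support and the bound \(\sum_i\ln(C/\mu(s_i))\), which still depends only on \(C\) and \(\mu\) and hence gives the required uniformity over families.
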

This is clear, as
\(\mu(g)\frac{dg\nu}{d\nu}\leq \frac{d(\mu\ast\nu)}{d\nu}\leq C\), hence \(\frac{dg\nu}{d\nu}\leq\frac{C}{\mu(g)}\).
\begin{notation}
Let \((G,\mu)\) be a measured discrete countable group, with \(\mu\) generating. For any \(0<a<1\), define the Abel sum 
\[\mu_a:=(1-a)\sum_{n=0}^\infty a^n\cdot\mu^{*n}\]
\end{notation}
We have the following:
\begin{lemma}\label{Lemma properties of abel measures}
With the notations above:
\begin{itemize}
    \item \(\mu_a\) is a probability measure on \(G\).
    \item We have the following formula \[\mu\ast\mu_a=(1-a)\sum_{n=0}^\infty a^n\cdot\mu^{\ast(n+1)}=\frac{\mu_a}{a}-\frac{1-a}{a}\delta_e\]
    \item For \(a>\frac{1}{2}\), the collection of measures \(\mu_a\) is uniformly BQI.
\end{itemize}
\end{lemma}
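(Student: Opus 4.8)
The plan is to dispatch the three items in turn, the first two by elementary manipulation of the Abel sum and the third by feeding the resulting estimate into Lemma~\ref{Lemma almost stationary implies BQI}. For item~1: each $\mu^{*n}$ is a probability measure (a convolution of probability measures is one), so $\sum_{n\ge 0} a^n\mu^{*n}$ converges in total variation because $\sum_n a^n<\infty$; thus $\mu_a$ is a well-defined positive measure, and pairing with the constant function $1$ gives total mass $(1-a)\sum_{n\ge0}a^n=1$. (Generation of $G$ by $\mu$ also gives $\mathrm{supp}(\mu_a)=\bigcup_n\mathrm{supp}(\mu^{*n})=G$, so $\mu_a$ is fully supported and $(G,\mu_a)$ is a genuine QI $G$-space.) For item~2, I would use that convolution with the fixed finite measure $\mu$ is a bounded operation on finite measures, hence commutes with the norm-convergent series:
\[
\mu*\mu_a=(1-a)\sum_{n=0}^{\infty}a^n\mu^{*(n+1)}=\frac{1-a}{a}\sum_{m=1}^{\infty}a^m\mu^{*m}=\frac{1-a}{a}\Big(\sum_{m=0}^{\infty}a^m\mu^{*m}-\delta_e\Big)=\frac{\mu_a}{a}-\frac{1-a}{a}\delta_e,
\]
where $\mu^{*0}=\delta_e$ and the last step uses the definition of $\mu_a$.

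For item~3, the plan is to read off from item~2 the inequality of positive measures
\[
\mu*\mu_a=\frac{\mu_a}{a}-\frac{1-a}{a}\delta_e\le\frac{1}{a}\mu_a<2\mu_a\qquad\text{for all }a>\tfrac12,
\]
so that $\mu*\mu_a\le C\mu_a$ with $C=2$ independent of $a$. Applying Lemma~\ref{Lemma almost stationary implies BQI} with the $G$-space $G$ (acting on itself), the measure $\nu=\mu_a$, and the generating measure $\mu$, we obtain a majorant for $(G,\mu_a)$ depending only on $C=2$ and $\mu$; since this bound does not involve $a$, the same function $M:G\to\mathbb{R}_+$ is a majorant for every $a\in(\tfrac12,1)$, which is precisely the assertion that the family $\{(G,\mu_a)\}$ is uniformly BQI.

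I do not expect a genuine obstacle: the only points needing care are the justification that convolution commutes with the infinite sum (it is a bounded operator applied to a total-variation-convergent series) and the reindexing $n\mapsto n+1$ in item~2; once the uniform constant $C=2$ is in hand, item~3 is an immediate citation of Lemma~\ref{Lemma almost stationary implies BQI}.
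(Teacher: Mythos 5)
Your proof is correct and follows exactly the route the paper intends: it declares the first two items obvious and derives the third from the identity in item 2 combined with Lemma \ref{Lemma almost stationary implies BQI}, which is precisely your argument with the uniform constant \(C=2\) for \(a>\tfrac12\). You have simply filled in the elementary details (total-variation convergence, reindexing, dropping the \(\delta_e\) term) that the paper leaves unstated.
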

The first two items are obvious, the last item follows from the second one and Lemma \ref{Lemma almost stationary implies BQI}.\\
Now we give the main definition of this section:
\begin{definition}
Let \(\mathcal{U}\) be an ultra-filter on \((\frac{1}{2},1)\) which contains \((1-\epsilon,1)\) for all \(\epsilon>0\).\\ Define the \(\mathcal{U}\)-\emph{Abel \(C^*\) space} of \((G,\mu)\) to be the following ultralimit
        \[\mathcal{B}^{alg}_{\mathcal{U}}(G,\mu)=\mathcal{U}\lim_a (L^\infty(G,\mu_a),\mu_a)\]
together with the probability measure \(\nu_\mu=\mathcal{U}\lim \mu_a\) on \(\mathcal{B}^{alg}_{\mathcal{U}}(G,\mu)\).\\
Denote by \(\mathcal{B}_{\mathcal{U}}(G,\mu)_{RN}\) the compact metrizable BQI \(G\)-space corresponding to \(\mathcal{B}^{alg}_{\mathcal{U}}(G,\mu)_{RN}\), where the latter is the Radon-Nikodym factor of \(\mathcal{B}^{alg}_{\mathcal{U}}(G,\mu)\).
\end{definition}
Note that \(\mathcal{B}_{\mathcal{U}}(G,\mu)_{RN}\) is the canonical model of Proposition \ref{Proposition RN model for RN factor} for the RN factor of \(\mathcal{U}\lim_{big} (G,\mu_a)\).
 
\begin{prop}\label{Proposition Abel space is stationary}
We have \(\mu\ast\nu_{\mu}=\nu_{\mu}\). Thus \((\mathcal{B}^{alg}_{\mathcal{U}}(G,\mu),\nu_\mu)\), \(\mathcal{B}_{\mathcal{U}}(G,\mu)_{RN}\) are \(\mu\)-stationary.
\end{prop}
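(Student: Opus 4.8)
The whole statement reduces to the algebraic identity $\mu\ast\mu_a=\tfrac{\mu_a}{a}-\tfrac{1-a}{a}\delta_e$ recorded in Lemma \ref{Lemma properties of abel measures}, pushed through the ultralimit. Note first that $\mu\ast\nu_\mu$ is a well-defined state on $\mathcal{B}^{alg}_{\mathcal{U}}(G,\mu)$: since $a>\tfrac12$, the measures $\mu_a$ are uniformly BQI (Lemma \ref{Lemma almost stationary implies BQI}, Lemma \ref{Lemma properties of abel measures}), so in particular $\mu\ast\mu_a\ll^b\mu_a$ with uniformly bounded derivative, and Lemma \ref{Lemma propetries of ultralimit of C prob spaces}(2) (together with the remark after Definition \ref{definition BQI G C space}) guarantees that the convolution descends to the reduced algebra.

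The computation itself is as follows. Fix a uniformly bounded family $(\varphi_a)$ of functions on $G$; every element of $\mathcal{B}^{alg}_{\mathcal{U}}(G,\mu)$ is the image of such a family, namely $\mathcal{U}\lim\varphi_a$. By Lemma \ref{Lemma RN cocyle in C ultralimit}(3),
\[
(\mu\ast\nu_\mu)(\mathcal{U}\lim\varphi_a)=\mathcal{U}\lim_a(\mu\ast\mu_a)(\varphi_a)=\mathcal{U}\lim_a\Big(\tfrac1a\,\mu_a(\varphi_a)-\tfrac{1-a}{a}\,\varphi_a(e)\Big).
\]
Since $\mathcal{U}$ converges to $1$, one has $\mathcal{U}\lim\tfrac1a=1$ and $\mathcal{U}\lim\tfrac{1-a}{a}=0$; as $\sup_a|\varphi_a(e)|<\infty$, the second term dies in the ultralimit by the standard properties of ultralimits of bounded sequences (Lemma \ref{lemma ultralimit of sequences}), so the right-hand side equals $\mathcal{U}\lim_a\mu_a(\varphi_a)=\nu_\mu(\mathcal{U}\lim\varphi_a)$. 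As this holds for all elements, $\mu\ast\nu_\mu=\nu_\mu$, i.e.\ $(\mathcal{B}^{alg}_{\mathcal{U}}(G,\mu),\nu_\mu)$ is $\mu$-stationary.

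Finally, to pass to $\mathcal{B}_{\mathcal{U}}(G,\mu)_{RN}$: this space carries the restriction of $\nu_\mu$ to the $G$-invariant subalgebra $A_{RN}$ generated by the RN cocycle, and $\mu$-stationarity of a state is inherited by any $G$-invariant subalgebra (equivalently, survives under the factor map $\mathcal{A}_{RN}\to\mathcal{A}$): for $b\in A_{RN}$, using that $^{g^{-1}}b\in A_{RN}$, $(\mu\ast(\nu_\mu|_{A_{RN}}))(b)=\sum_g\mu(g)\nu_\mu({}^{g^{-1}}b)=(\mu\ast\nu_\mu)(b)=\nu_\mu(b)$. Hence $\mathcal{B}_{\mathcal{U}}(G,\mu)_{RN}$ is $\mu$-stationary as well.

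There is no deep obstacle here; the only points requiring care are the well-definedness of $\mu\ast\nu_\mu$ on the reduced quotient (handled by uniform BQI-ness) and bookkeeping the direction of the factor map when transferring stationarity to the RN model. The uniform bound $\|\mu\ast\mu_a/\mu_a\|\le\tfrac1a\le2$ is exactly what makes interchanging $\mu\ast(-)$ with $\mathcal{U}\lim$ legitimate, so keeping the index set inside $(\tfrac12,1)$ is what the argument uses.
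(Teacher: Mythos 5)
Your proof is correct and follows essentially the same route as the paper: both rest on the identity $\mu\ast\mu_a=\tfrac{\mu_a}{a}-\tfrac{1-a}{a}\delta_e$ and the fact that the error term vanishes in the ultralimit as $a\to 1$ along $\mathcal{U}$. Your version merely spells out the evaluation against a general element $\mathcal{U}\lim\varphi_a$ and the (correct, if routine) transfer of stationarity to the $G$-invariant subalgebra $A_{RN}$, which the paper leaves implicit.
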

\begin{proof}
Using Lemma \ref{Lemma properties of abel measures} we have the following identity in \(\mathcal{U}\lim_a \:( L^\infty(G)^{*})\)
\[\mu\ast\nu_\mu=\mathcal{U}\lim_a \mu\ast\mu_a=\mathcal{U}\lim_a \frac{\mu_a}{a}-\frac{1-a}{a}\delta_e=\mathcal{U}\lim_a \mu_a=\nu_\mu\]
Thus we get this identity when we consider \(\nu_{\mu}\) as a state on the underlying \(C^{*}\)-algebra of \(\mathcal{B}_{\mathcal{U}}^{alg}(G,\mu)\).
\end{proof}

Our main goal for this section is to show that \(\mathcal{B}_{\mathcal{U}}(G,\mu)_{RN}\) is the RN model of the Furstenberg-Poisson boundary of \((G,\mu)\) of Corollary \ref{corollary RN model for Furstenberg-Poisson boundary}.

\begin{lemma}\label{Lemma diagram G times X}
\(\)
\begin{enumerate}
\item 
Given a QI \(G\)-space \((X,\nu)\) and a quasi-invariant measure \(\eta\) on \(G\), define \(Y=G\times X\) with the \(G\)-action \(g(h,x)=(gh,x)\) and the product measure \(\eta\times\nu\). Then \((Y,\eta\times\nu)\) is a QI G-space. The mappings \(m(g,x)=gx,p(g,x)=g\) define a diagram:
\[\begin{CD}
(Y,\eta\times\nu) @>m>>  (X,\eta\ast\nu) \\
@VVpV \\
(G,\eta)
\end{CD}
\]
in which \(m\) is a factor map and \(p\) is measure preserving extension.
\item
Dually, let \((A,\nu)\) be a BQI \(C^*-G\)-space, \(\eta\) a BQI measure on \(G\). Then we have a BQI \(C^*-G\)-space \((C,\tau)\) and a diagram:
\[\begin{CD}
(A,\eta\ast\nu) @>m>> (C,\tau) \\
@. @AApA\\
@. (L^\infty(G),\eta)
\end{CD}
\]
in which \(m\) is a factor and \(p\) is measure preserving extension.

\end{enumerate}
\end{lemma}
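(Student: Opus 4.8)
The plan is to establish Part 1 by direct verification on the product space, and then to obtain Part 2 as the corresponding statement about commutative $C^*$-algebras, which I would construct explicitly.

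\textbf{Part 1.} Since $G$ acts on $Y=G\times X$ only through the first coordinate, the pushforward of $\eta\times\nu$ under $(h,x)\mapsto(gh,x)$ is $({}^g\eta)\times\nu$, which is equivalent to $\eta\times\nu$ because $\eta$ is quasi-invariant, with $\frac{dg(\eta\times\nu)}{d(\eta\times\nu)}(h,x)=\frac{dg\eta}{d\eta}(h)$; hence $(Y,\eta\times\nu)$ is a QI $G$-space. The map $p(g,x)=g$ is $G$-equivariant, satisfies $p_*(\eta\times\nu)=\eta$, and the derivative just computed equals $\frac{dg\eta}{d\eta}\circ p$ --- precisely the defining condition for $p$ to be a measure-preserving extension. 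The map $m(g,x)=gx$ is $G$-equivariant since $m(g(h,x))=(gh)x=g(hx)=g\,m(h,x)$, and $m_*(\eta\times\nu)=\eta\ast\nu$ straight from the definition of the convolution. Being the pushforward under the equivariant map $m$ of the quasi-invariant measure $\eta\times\nu$, the measure $\eta\ast\nu$ is quasi-invariant, so $(X,\eta\ast\nu)$ is a QI $G$-space and $m$ a factor map, which gives the first diagram.

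\textbf{Part 2.} Rather than only invoking Gelfand duality, I would exhibit $(C,\tau)$ directly. Put $C:=\ell^\infty(G,A)$, the commutative unital $C^*$-algebra of bounded $A$-valued functions on $G$, equipped with the $G$-action $({}^gF)(h):=F(g^{-1}h)$ (acting on the $G$-variable only) and the state $\tau(F):=\sum_{g\in G}\eta(g)\,\nu(F(g))$. Define $p\colon(L^\infty(G),\eta)\to(C,\tau)$ by $p^*(f):=\bigl(h\mapsto f(h)1_A\bigr)$ and $m\colon(A,\eta\ast\nu)\to(C,\tau)$ by $m^*(a):=\bigl(h\mapsto {}^{h^{-1}}a\bigr)$. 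A direct check shows that these are $G$-equivariant $\ast$-homomorphisms with $\tau\circ p^*=\eta$ and $\tau\circ m^*=\sum_h\eta(h)\,{}^h\nu=\eta\ast\nu$. A quasi-invariant $\eta$ on a discrete group has full support, so $\tau(F^*F)=0$ forces $\nu(F(g)^*F(g))=0$, hence $F(g)=0$, for every $g$; thus $\tau$ is non-degenerate. The same observation makes $\eta\ast\nu$ non-degenerate on $A$, and the pointwise estimate ${}^g(\eta\ast\nu)=({}^g\eta)\ast\nu\le\|R_\eta(g)\|\,(\eta\ast\nu)$ --- which uses that $\eta$ is \emph{bounded} quasi-invariant --- shows $(A,\eta\ast\nu)$ is BQI. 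Finally, ${}^g\tau$ corresponds to the weights $h\mapsto\eta(g^{-1}h)$, so $R_C(g)=\bigl(h\mapsto R_\eta(g)(h)1_A\bigr)=p^*(R_\eta(g))$, which is bounded because $\eta$ is BQI; hence $(C,\tau)$ is a BQI $C^*$-$G$-space, $p$ is a measure-preserving extension, and $m$ is a factor. If a minimal $C$ is wanted, one can replace $\ell^\infty(G,A)$ by the closed $\ast$-subalgebra generated by the images of $p^*$ and $m^*$, which is $G$-invariant and contains every $R_C(g)$.

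\textbf{Main point.} Neither part presents a genuine obstacle; both reduce to bookkeeping. The one thing worth flagging is that Part 2 really requires $\eta$ to be \emph{bounded} quasi-invariant rather than merely quasi-invariant: this is exactly what makes $R_C(g)=p^*(R_\eta(g))$ and the Radon--Nikodym cocycle of $(A,\eta\ast\nu)$ bounded. One should also note that the (in general non-separable) algebra $C=\ell^\infty(G,A)$ still fits the framework of Section~\ref{subsection Ultralimit of C*-probabilty spaces and the G-equivariant case}, since a $C^*$-$G$-space is only required to carry a non-degenerate state, with no separability hypothesis.
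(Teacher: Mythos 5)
Your Part 1 is correct and is essentially identical to the paper's argument (direct verification that $p$ is an equivariant, measure-preserving projection and that $m$ pushes $\eta\times\nu$ to $\eta\ast\nu$). Your Part 2, however, takes a genuinely different route. The paper passes through Gelfand duality: it realizes $A=C(X)$ for a compact $G$-space $X$, applies Part 1 to $(X,\nu)$, and sets $(C,\tau)=(L^\infty(G\times X),\eta\times\nu)$ with $m$ the composition $A=C(X)\subset L^\infty(X,\nu)\to L^\infty(G\times X)$. You instead build $(C,\tau)$ purely algebraically as $\ell^\infty(G,A)$ with $\tau(F)=\sum_g\eta(g)\nu(F(g))$ and explicit maps $p^*(f)=(h\mapsto f(h)1_A)$, $m^*(a)=(h\mapsto{}^{h^{-1}}a)$. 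The verifications you indicate do go through: $\tau\circ m^*=\sum_h\eta(h)\,{}^h\nu=\eta\ast\nu$, non-degeneracy of $\tau$ follows from the full support of $\eta$ (automatic for a quasi-invariant probability on a discrete group), and ${}^g\tau=p^*(R_\eta(g))\cdot\tau$, so $R_C(g)=p^*(R_\eta(g))$ is bounded and $p$ is a measure-preserving extension. What your version buys is that it avoids the Gelfand representation and the (non-separable, non-standard) product measure space entirely, and it exhibits the Radon--Nikodym cocycle of $(C,\tau)$ by an explicit formula; what the paper's version buys is that Part 2 becomes a one-line corollary of Part 1.

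One caveat: your parenthetical claim that the domination $({}^g\eta)\ast\nu\le\|R_\eta(g)\|\,(\eta\ast\nu)$ "shows $(A,\eta\ast\nu)$ is BQI" is not justified in the sense the paper uses. Being BQI requires ${}^g(\eta\ast\nu)\ll^b\eta\ast\nu$, i.e., a Radon--Nikodym derivative that is an element of $A$ itself; a bounded domination between states on a $C^*$-algebra does not produce one (on $C([0,1])$ one has $\mathbf{1}_{[0,1/2]}\,dx\le dx$ with no continuous density), and indeed the paper's own remark in \ref{subsubsection G-C*-spaces} warns that $\mu\ast\nu\ll^b\nu$ can fail. Fortunately this claim is not needed: the lemma only asserts that $(C,\tau)$ is BQI and that $m$ is a ($G$-equivariant) factor of $C^*$-probability spaces, for which non-degeneracy of $\eta\ast\nu$ (which you do establish) suffices; and in the only application, Proposition \ref{Proposition universal property of abel}, one has $\eta\ast\nu=\nu$ anyway.
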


\begin{proof}
\begin{enumerate}
\item 
It is obvious that \(p\) is \(G\)-equivariant and that \(p_*(\eta\times\nu)=\eta\). Moreover, since the action is only on the first component we conclude that 
\(\frac{dg(\eta\times\nu)}{d(\eta\times\nu)}=\frac{dg\eta}{d\eta}\circ p\)
which implies that \(p\) is a measure preserving extension. \\
The equalities \(m\big(g\cdot(h,x)\big)=m(gh,x)=(g\cdot h)\cdot x=g\cdot(h\cdot x)=g\cdot m(h,x)\) and \(m_*(\eta\times\nu)=\eta\ast\nu\) implies that \(m\) is a factor.   
\item
By Gelfand's representation theorem, \(A\cong C(X)\) where \(X\) is a compact Hausdorff topological space, \(G\) acts on \(X\) continuously and \(\nu\) is a regular probability measure on \(X\). We have that \(\frac{dg\nu}{d\nu}\in C(X)\) and in particular bounded, hence \((X,\nu)\) is a BQI \(G\)-space. Applying the first item, take \((C,\tau)=(L^\infty(Y),\eta\times\nu)\) and \(m\) to be the composition \(A\cong C(X)\subset L^\infty(X,\nu)\longrightarrow (C,\tau)\). The lemma follows.
\end{enumerate}
\end{proof}

Let us show a universality property of \(\mathcal{B}_{\mathcal{U}}^{alg}(G,\mu)\):
\begin{prop}\label{Proposition universal property of abel}
Let \((A,\nu)\) be a \(C^*\)-BQI \(G\)-space which is \(\mu\)-stationary space. Then we have \((C,\tau)\) a \(C^*\)-BQI \(G\)-space which is \(\mu\)-stationary together with a diagram:
\[\begin{CD}
(A,\nu) @>m>> (C,\tau) \\
@. @AApA\\
\mathcal{B}^{alg}_{\mathcal{U}}(G,\mu)_{RN} @>i>>\mathcal{B}_{\mathcal{U}}^{alg}(G,\mu)
\end{CD}
\]
Where \(m\) is a factor, and \(p,i\) are measure preserving extensions.
\end{prop}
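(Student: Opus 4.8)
The plan is to construct $(C,\tau)$ fibrewise — one value of $a$ at a time — by feeding $(A,\nu)$ together with the Abel measure $\mu_a$ into the dual skew-product construction of Lemma \ref{Lemma diagram G times X}, and then to pass to the ultralimit along $\mathcal{U}$.

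First, fix $a\in(\tfrac12,1)$ and apply part 2 of Lemma \ref{Lemma diagram G times X} with the BQI $C^*$-$G$-space $(A,\nu)$ and the BQI measure $\eta=\mu_a$ on $G$. The stationarity hypothesis $\mu\ast\nu=\nu$ gives $\mu^{\ast n}\ast\nu=\nu$ for every $n\ge 0$, whence $\mu_a\ast\nu=(1-a)\sum_{n\ge 0}a^n(\mu^{\ast n}\ast\nu)=\nu$; so the lemma produces a BQI $C^*$-$G$-space $(C_a,\tau_a)$ (concretely $(L^\infty(G\times X),\mu_a\times\nu)$ with $G$ acting on the first coordinate), a factor $m_a\colon(A,\nu)\to(C_a,\tau_a)$, and a measure preserving extension $p_a\colon(L^\infty(G),\mu_a)\to(C_a,\tau_a)$.

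The point that makes the ultralimit legitimate is that the family $(C_a,\tau_a)_{a\in(1/2,1)}$ is uniformly BQI: since $G$ acts only on the $G$-coordinate of $G\times X$ one has $R_{\tau_a}(g)=R_{\mu_a}(g)\circ p$, so any majorant for the uniformly BQI family $(L^\infty(G),\mu_a)$ (which exists for $a>\tfrac12$ by Lemma \ref{Lemma properties of abel measures}) also serves $(C_a,\tau_a)$. Hence set $(C,\tau):=\mathcal{U}\lim_a(C_a,\tau_a)$, a $C^*$-BQI $G$-space. Now invoke functoriality of the ultralimit (Lemma \ref{Lemma functoriality G C spaces}): $p:=\mathcal{U}\lim_a p_a$ is a measure preserving extension from $\mathcal{U}\lim_a(L^\infty(G),\mu_a)$, which is $\mathcal{B}^{alg}_{\mathcal{U}}(G,\mu)$ by definition, onto $(C,\tau)$; and $\mathcal{U}\lim_a m_a$ is a factor from the constant family $\mathcal{U}\lim_a(A,\nu)$ to $(C,\tau)$, so precomposing with the diagonal measure preserving extension $\Delta\colon(A,\nu)\to\mathcal{U}\lim_a(A,\nu)$ (Lemma \ref{Lemma functoriality G C spaces}) gives the desired factor $m:=(\mathcal{U}\lim_a m_a)\circ\Delta$. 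Finally $i$ is nothing but the canonical measure preserving extension $\mathcal{B}^{alg}_{\mathcal{U}}(G,\mu)_{RN}\to\mathcal{B}^{alg}_{\mathcal{U}}(G,\mu)$ attached to the Radon--Nikodym factor.

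It then remains to see that $(C,\tau)$ is $\mu$-stationary. The individual $(C_a,\tau_a)$ are not, but the Abel identity of Lemma \ref{Lemma properties of abel measures} gives $\mu\ast\tau_a=(\mu\ast\mu_a)\times\nu=\tfrac1a\tau_a-\tfrac{1-a}{a}(\delta_e\times\nu)$; evaluating on an arbitrary bounded family and letting $a\to1$ along $\mathcal{U}$, the two correction terms vanish exactly as in the proof of Proposition \ref{Proposition Abel space is stationary}, so $\mu\ast\tau=\tau$. I expect the only genuinely delicate points to be the uniform-BQI bookkeeping and keeping straight which arrows are mere factors versus measure preserving extensions — in particular $m$ is \emph{not} claimed to be measure preserving; beyond that the argument is a direct unwinding of the ultralimit functoriality already set up in Section \ref{section Ultralimit}.
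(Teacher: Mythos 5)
Your proposal is correct and follows essentially the same route as the paper: apply part 2 of Lemma \ref{Lemma diagram G times X} with $\eta=\mu_a$, observe the family $(C_a,\tau_a)$ is uniformly BQI because $p_a$ is measure preserving over the uniformly BQI $(L^\infty(G),\mu_a)$, pass to the ultralimit, and assemble $m$ and $p$ via Lemma \ref{Lemma functoriality G C spaces} and the diagonal $\Delta$. The only (harmless) divergence is at the very end: the paper deduces $\mu$-stationarity of $\tau$ directly from $p$ being a measure preserving extension over the stationary $\mathcal{B}^{alg}_{\mathcal{U}}(G,\mu)$ (Proposition \ref{Proposition Abel space is stationary}), whereas you rerun the Abel-sum cancellation at the level of the $\tau_a$ --- both are valid.
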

\begin{proof}
It is clear that \(i\) is a measure preserving extension.
Consider \(\eta=\mu_a\:(a>\frac{1}{2})\), note that \(\eta\ast\nu=\nu\) and thus by Lemma \ref{Lemma diagram G times X}  we have \((C_a,\tau_a)\) with
\[\begin{CD}
(A,\nu) @>m_a>> (C_a,\tau_a) \\
@. @AAp_aA\\
@. (L^\infty(G),\mu_a)
\end{CD}\]
Since \(\mu_a\) are uniformly BQI and \(p_a\) is measure preserving, we conclude that \(\tau_a\) are uniformly BQI.
We take \((C,\tau):=\mathcal{U}\lim (C_a,\tau_a)\). Using functionality (Lemma \ref{Lemma functoriality G C spaces}), the maps \(\mathcal{U}\lim_a m_a,\mathcal{U}\lim_a p_a\) fit into the following diagram:
\[
\begin{CD}
(A,\nu) @>\Delta>> \mathcal{U}\lim_a (A,\nu) @>\mathcal{U}\lim_a m_a>> (C,\tau)=\mathcal{U}\lim_a (C_a,\tau_a) \\
@. @. @AA\mathcal{U}\lim_a p_a A  \\
@. @.  \mathcal{B}^{alg}_{\mathcal{U}}(G,\mu)=\mathcal{U}\lim_a(L^\infty(G,\mu_a),\mu_a)
\end{CD}
\]
such that \(m:=\mathcal{U}\lim m_a\circ\Delta\) is a factor, \(p=\mathcal{U}\lim p_a\) is a measure preserving extension, in particular \((C,\tau)\) is \(\mu\)-stationary.

\end{proof}
We shall need the following well known result, whose proof follows by comparing the conditional measures (see \cite{Furstenberg1973BoundaryTA} or \cite{BaderShalom}) to the disintegration.
\begin{lemma}\label{Lemma Furstenberg Glasner universal property}
Let \(\mathcal{B}(G,\mu)\) be the Furstenberg-Poisson boundary of \((G,\mu)\). Let \(\mathfrak{X}\) be a \(\mu\) stationary \(G\)-space that is an extension of the Furstenberg-Poisson boundary \(p:\mathfrak{X}\to\mathcal{B}(G,\mu)\). Then \(p\) is a measure preserving extension.
\end{lemma}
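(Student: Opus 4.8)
The strategy is to identify the measure–theoretic disintegration of $\nu_{\mathfrak X}$ over $p$ with the family of boundary (conditional) measures attached to $\mathfrak X$ as a $\mu$-stationary space, and then read off the Radon--Nikodym cocycle fibrewise. We may assume $\mathfrak X$ is a standard Borel $G$-space (pass to a topological model, which changes neither $L^\infty$, nor the measure class, nor the cocycle), so that disintegrations over $p\colon\mathfrak X\to\mathcal B(G,\mu)$ exist.

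First I would recall the construction of conditional measures. Let $(\Omega,\mathbb P)$ be the path space of the $\mu$-random walk, with increments $(x_n)$ and positions $w_n=x_1\cdots x_n$. For any $\mu$-stationary $(Y,m)$ the sequence $w_n m$ is a bounded martingale of probability measures on $Y$, hence converges $\mathbb P$-a.e.\ to a probability measure $m_\omega$; moreover $m_\omega$ depends only on the image of $\omega$ under the canonical equivariant boundary map $\mathrm{bnd}\colon\Omega\to\mathcal B(G,\mu)$, so one writes $m_b$ for $b\in\mathcal B(G,\mu)$, and these satisfy the disintegration $\int_{\mathcal B}m_b\,d\nu_{\mathcal B}(b)=m$ and the equivariance $g\,m_b=m_{gb}$ (see \cite{Furstenberg1973BoundaryTA}, \cite{BaderShalom}). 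Applied to $Y=\mathcal B(G,\mu)$ with its harmonic measure $\nu_{\mathcal B}$, the defining property of the Poisson boundary as the maximal $\mu$-boundary is precisely that the conditional measures are Dirac: $(\nu_{\mathcal B})_b=\delta_b$.

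Next, apply the construction to $\mathfrak X$ to obtain conditional measures $\nu_b:=(\nu_{\mathfrak X})_b$ on $\mathfrak X$ with $\int_{\mathcal B}\nu_b\,d\nu_{\mathcal B}(b)=\nu_{\mathfrak X}$ and $g\nu_b=\nu_{gb}$. Since $p$ is $G$-equivariant with $p_*\nu_{\mathfrak X}=\nu_{\mathcal B}$, we have $p_*(w_n\nu_{\mathfrak X})=w_n\nu_{\mathcal B}\to\delta_{\mathrm{bnd}(\omega)}$, and $p_*$ is weak-$*$ continuous, so $p_*\nu_b=\delta_b$; that is, $\nu_b$ is supported on the fibre $p^{-1}(b)$ for $\nu_{\mathcal B}$-a.e.\ $b$. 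Hence the $\nu_b$ are mutually singular along the fibres of $p$, and by uniqueness of disintegration $(\nu_b)_b$ \emph{is} the disintegration of $\nu_{\mathfrak X}$ over $p$. Finally I would compute the cocycle using equivariance and the change of variables $b\mapsto gb$, whose pushforward sends $\nu_{\mathcal B}$ to $g\nu_{\mathcal B}$:
\[
g\nu_{\mathfrak X}=\int_{\mathcal B}g\nu_b\,d\nu_{\mathcal B}(b)=\int_{\mathcal B}\nu_{gb}\,d\nu_{\mathcal B}(b)=\int_{\mathcal B}\nu_b\,d(g\nu_{\mathcal B})(b)=\int_{\mathcal B}\nu_b\,\frac{dg\nu_{\mathcal B}}{d\nu_{\mathcal B}}(b)\,d\nu_{\mathcal B}(b).
\]
Because the $\nu_b$ live on the disjoint fibres $p^{-1}(b)$, pulling a function $b\mapsto\psi(b)$ out of such an integral yields $\psi\circ p$ as Radon--Nikodym derivative against $\nu_{\mathfrak X}=\int\nu_b\,d\nu_{\mathcal B}(b)$ (test both sides against $\varphi\in C(\mathfrak X)$ and use that $p$ is constant equal to $b$ on $\mathrm{supp}\,\nu_b$). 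Therefore $\frac{dg\nu_{\mathfrak X}}{d\nu_{\mathfrak X}}=\frac{dg\nu_{\mathcal B}}{d\nu_{\mathcal B}}\circ p$, which is exactly the statement that $p$ is a measure preserving extension.

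The main obstacle is the third step: rigorously matching the measure-theoretic disintegration of $\nu_{\mathfrak X}$ over $p$ with the probabilistically defined boundary measures $\nu_b$. This rests on the universal property of the Poisson boundary (its conditional measures are point masses) together with the compatibility of the martingale limit with the factor map $p$; once this identification is in place the remainder is a bookkeeping computation with disintegrations. The measurability statement that $(\nu_{\mathfrak X})_\omega$ factors through $\mathrm{bnd}$, and the equivariance $g\nu_b=\nu_{gb}$, are the standard inputs we borrow from \cite{Furstenberg1973BoundaryTA} and \cite{BaderShalom}.
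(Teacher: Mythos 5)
Your argument is correct and is exactly the proof the paper has in mind: the paper disposes of this lemma in one line by saying it ``follows by comparing the conditional measures (see \cite{Furstenberg1973BoundaryTA} or \cite{BaderShalom}) to the disintegration,'' and your write-up carries out precisely that comparison --- identifying the martingale-limit conditional measures $\nu_b$ with the disintegration of $\nu_{\mathfrak X}$ over $p$ via $p_*\nu_b=\delta_b$, then reading off $\frac{dg\nu_{\mathfrak X}}{d\nu_{\mathfrak X}}=\frac{dg\nu_{\mathcal B}}{d\nu_{\mathcal B}}\circ p$. The details you supply (passing to a topological model, Dirac conditionals on the maximal boundary, the change-of-variables computation) are the standard ones and are sound.
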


We now come to our characterization of the Furstenberg-Poisson boundary, which implies Theorem \ref{Theorem E}:
\begin{theorem}\label{Theorem Abel space is Furstenberg-Poisson boundary}
Let \(G\) be a discrete countable group, and \(\mu\) a generating measure on \(G\). Then \(\mathcal{B}_{\mathcal{U}}(G,\mu)_{RN}\) is the canonical RN model of the Furstenberg-Poisson boundary of \((G,\mu)\).
\end{theorem}
\begin{remark}
Recall from Corollary \ref{corollary RN model for Furstenberg-Poisson boundary} that the Furstenberg-Poisson boundary admits a canonical RN model.
\end{remark}

\begin{proof}[Proof of the Theorem]
Consider \(\mathcal{B}(G,\mu)\) the canonical RN model to the Furstenberg-Poisson boundary, and consider \((C(\mathcal{B}(G,\mu)),\nu)=(L^\infty(\mathcal{B}(G,\mu)),\nu)_{RN}\). It is \(\mu\)-stationary, and thus by Proposition \ref{Proposition universal property of abel} there exists a \(\mu\)-stationary \(C^*-G\)-space \((C,\tau)\) and a diagram:
\[
\begin{CD}
C(\mathcal{B}(G,\mu)) @>m>> (C,\tau) \\
@. @AA p A  \\
\mathcal{B}^{alg}_{\mathcal{U}}(G,\mu)_{RN}  @>i>> \mathcal{B}^{alg}_{\mathcal{U}}(G,\mu)
\end{CD}
\]
Here \(q=p\circ i\) is a measure preserving extension. Thus:
\[(C(\mathcal{B}_{\mathcal{U}}(G,\mu)_{RN}),\nu_{\mu})\cong\mathcal{B}^{alg}_{\mathcal{U}}(G,\mu)_{RN} \cong (C,\tau)_{RN}\]
Let \(\mathcal{X}=(X,\tau)\) be a Gelfand space for \((C,\tau)\). It is a \(\mu\)-stationary space and \(m\) gives rise to a factor map \(m: \mathcal{X}\to \mathcal{B}(G,\mu)\). By Lemma \ref{Lemma Furstenberg Glasner universal property} we conclude that \(m\) is a measure-preserving extension, which implies that \(m\) induces an isomorphism \((C,\tau)_{RN}\cong (C(\mathcal{B}(G,\mu)),\nu)\).\\
In conclusion we get \((C(\mathcal{B}(G,\mu)),\nu)\cong (C(\mathcal{B}_{\mathcal{U}}(G,\mu)_{RN}),\nu_{\mu})\), and the proof is complete.
\end{proof}

\begin{corollary}\label{Corollary RN parameters on group and boundary}
Let \(G\) be a discrete countable group, \(\mu\) a generating probability measure on \(G\). Denote by \((\mathcal{B},\nu)\) the Furstenberg-Poisson boundary of \((G,\mu)\). Then for any \(f\in C(0,\infty)\) and \(g\in G\) we have:
\[\lim_{a\to1^{-}} \sum_{x\in G} f\bigg(\frac{\mu_a(g^{-1}x)}{\mu_a(x)}\bigg)\mu_a(x)=\intop_{\mathcal{B}} f\big(\frac{dg\nu}{d\nu}(x)\big)d\nu(x)\]
In particular, if \(f\) is convex with \(f(1)=0\) and \(\lambda\) is a finitely supported measure on \(G\) we have:
\[\lim_{a\to 1^{-}} h_{\lambda,f}(G,\mu_{a})=h_{\lambda,f}(\mathcal{B}(G,\mu),\nu_{\mu})\]
\end{corollary}
\begin{proof}
Follows immediately from Lemma \ref{Lemma convergence of RN parameters} and Theorem \ref{Theorem Abel space is Furstenberg-Poisson boundary} using the following observation: \(\lim_{a\to 1^{-}} R(a)=R\) iff for any ultrafilter \(\mathcal{U}\) on \((0,1)\) with \(\mathcal{U}\lim a=1\) one has \(\mathcal{U}\lim R(a)=R\). 
\end{proof}

\begin{remark}\label{remark martin boundary}
We note that it is possible to prove the above corollary without applying our ultralimit construction, in a more direct way by considering Martin boundary (see chapter 24 of \cite{woess2000random} and \cite{woess2021ratio} and noticing that the Green function is actually \(G(g,x|a) = \frac{g\mu_{a}(x)}{1-a}\)). It could be interesting to relate those constructions.
\end{remark}

One can deduce from Corollary \ref{Corollary RN parameters on group and boundary} the fundamental Theorem 3.1 in \cite{BoundaryEntropy}:

\begin{corollary}\label{corollary entropy KV using ultralimits} 
\[h_{\mu}(\mathfrak{B}(G,\mu),\nu_{\mu})=\lim_{n\to\infty} \frac{H(\mu^{*n})}{n}\]
For \(\mu\) of finite entropy (entropy- \(H(\kappa):=-\sum_{g} \kappa(g)\ln(\kappa(g))\) ).
\end{corollary}

\begin{proof}
Indeed, first note that one has \(H(\kappa\ast\lambda)\leq H(\kappa)+H(\lambda)\) and thus \(h=\lim\frac{H(\mu^{*n})}{n}\) exists and \(\frac{H(\mu^{*n})}{n}\geq h\). The desired equality follows from the following 3 steps:\\
\underline{\emph{Step 1}}: \(h\geq h_\mu(\nu)\) for any \(\mu\)-stationary \((X,\nu)\).\\
Let us show first \(H(\mu)\geq h_\mu(\nu)\). Indeed, \(\frac{dg^{-1}\nu}{d\nu}\geq \mu(g)\) and thus \(-\ln(\frac{dg^{-1}\nu}{d\nu})\leq -\ln(\mu(g))\) and then integrate over \(X\) and average using \(\mu\). Since for \(\mu\)-stationary \(\nu\) one has \(h_{\mu^{*n}}(\nu)=n\cdot h_{\mu}(\nu)\) we conclude step 1.\\
\underline{\emph{Step 2}}: \(h_{\mu}(\mathcal{B}(G,\mu),\nu_{\mu})=\lim_{a\to 1^{-}} h_{\mu}(G,\mu_a)\).\\
Corollary \ref{Corollary RN parameters on group and boundary} yields \(h_{\delta_g}(\mu_a)\to h_{\delta_g}(\nu_{\mu})\) for any \(g\in G\). Since \(0\leq h_{\delta_g}(\mu_a)\leq \ln(\frac{2}{\mu(g)})\) for \(a\geq\frac{1}{2}\), and \(\sum_{g}\mu(g)\ln(\frac{1}{\mu(g)})=H(\mu)<\infty\), we can replace sum and limit and conclude step 2.\\
\underline{\emph{Step 3}}:
\(h\leq\liminf_a h_{\mu}(G,\mu_a)\).\\
Indeed, one has the following formula for entropy on the group: 
\(h_{\mu}(G,\lambda)=-\sum_{g} (\mu\ast\lambda-\lambda)(g)\ln(\lambda(g))\). \\
Thus, noting that \(\mu_a(e)\geq 1-a\) and using concave of \(H\) we obtain:\\
    \(\liminf_{a} h_{\mu}(\mu_a) =\liminf_{a} -\sum_{g} (\mu\ast\mu_a-\mu_a)(g)\ln(\mu_a(g))= \liminf_a \Big(\frac{1-a}{a}H(\mu_a)+\frac{1-a}{a}\ln(\mu_a(e))\Big)\\
    \geq \liminf_{a}(1-a)^2\sum a^{n}H(\mu^{*n}) \geq \liminf_{a} (1-a)^{2}\sum_{n}a^{n}\cdot n\cdot h=h\) 

\end{proof}

The rest of the paper will consist of applications of our characterization of the Furstenberg-Poisson boundary (Theorem \ref{Theorem Abel space is Furstenberg-Poisson boundary}) and in particular its numerical consequence (Corollary \ref{Corollary RN parameters on group and boundary}). The first application will be to amenable groups (Theorem \ref{Theorem A}). The second application will be to the problem of calculating entropy minimal number
(see Definition \ref{definition minimal entropy number}) for the action of the Free group on itself (Theorems \ref{Theorem B},  \ref{Theorem C}, \ref{Theorem D}).

\section{Application for amenable groups}\label{section Application for amenable groups}
Recall that a generating probability measure on \(G\) is called \emph{Liouville} if the corresponding Furstenberg-Poisson boundary is a point.
\begin{corollary}\label{Corollary entropy and Liouville measure}
If \(\mu\) is a Liouville measure on \(G\), then for any finitely supported probability measure \(\lambda\) on \(G\) and a convex function \(f\) with \(f(1)=0\) we have:
\[\lim_{a\to 1^{-}} h_{\lambda,f}(\mu_a)=0\]
Here \(\mu_a=(1-a)\sum_{n}a^{n}\mu^{*n}\).
\end{corollary}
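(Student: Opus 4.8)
The plan is to deduce the statement from the numerical consequence of the main construction, Corollary \ref{Corollary RN parameters on group and boundary}, together with the defining property of a Liouville measure. Since $f$ is a finite convex function on the open interval $(0,\infty)$, it is automatically continuous there, so $f\in C(0,\infty)$ and Corollary \ref{Corollary RN parameters on group and boundary} is applicable (and a Liouville measure is generating, in particular non-degenerate).

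First I would rewrite each term of $h_{\lambda,f}(\mu_a)=\sum_{g\in G}\lambda(g)\,D_f(g^{-1}\mu_a||\mu_a)$ as a sum over the group. For $a>\tfrac12$ the measure $\mu_a$ is uniformly BQI (Lemma \ref{Lemma properties of abel measures}), the Radon--Nikodym derivative on the discrete space $G$ is $\tfrac{d(g^{-1}\mu_a)}{d\mu_a}(x)=\tfrac{\mu_a(gx)}{\mu_a(x)}$, and it is bounded above and below, so
\[D_f(g^{-1}\mu_a||\mu_a)=\sum_{x\in G} f\!\Big(\tfrac{\mu_a(gx)}{\mu_a(x)}\Big)\mu_a(x)\]
is an absolutely convergent sum. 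Applying Corollary \ref{Corollary RN parameters on group and boundary} with its group element taken to be $g^{-1}$ gives, for each fixed $g\in G$,
\[\lim_{a\to1^-}D_f(g^{-1}\mu_a||\mu_a)=\intop_{\mathcal{B}} f\!\Big(\tfrac{dg^{-1}\nu}{d\nu}\Big)\,d\nu=D_f(g^{-1}\nu||\nu),\]
where $(\mathcal{B},\nu)$ denotes the Poisson boundary of $(G,\mu)$ with its harmonic measure.

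Now I would invoke the Liouville hypothesis: $\mathcal{B}(G,\mu)$ is a one-point space, so $\tfrac{dg^{-1}\nu}{d\nu}\equiv 1$ and $D_f(g^{-1}\nu||\nu)=f(1)=0$ for every $g$. Hence $\lim_{a\to1^-}D_f(g^{-1}\mu_a||\mu_a)=0$ for each $g$ in the finite set $\mathrm{supp}(\lambda)$, and since $h_{\lambda,f}(\mu_a)$ is a finite $\lambda$-weighted sum of these quantities, the limit passes through the sum and $\lim_{a\to1^-}h_{\lambda,f}(\mu_a)=0$. An equivalent, more structural route works directly with the ultralimit: for any ultrafilter $\mathcal{U}$ on $(\tfrac12,1)$ converging to $1$ one has $\mathcal{U}\lim_a h_{\lambda,f}(G,\mu_a)=h_{\lambda,f}\big(\mathcal{U}\lim_{big}(G,\mu_a),\nu_\mu\big)$ by Corollary \ref{corollary convergence of entropy}; since $h_{\lambda,f}$ is unchanged under the measure-preserving passage to the RN factor (Lemma \ref{Lemma f-entropy}), this equals $h_{\lambda,f}(\mathcal{B}_{\mathcal{U}}(G,\mu)_{RN},\nu_\mu)$, which by Theorem \ref{Theorem Abel space is Poisson boundary} is the Furstenberg $\lambda,f$-entropy of the (trivial) Poisson boundary, hence $0$; the arbitrariness of $\mathcal{U}$ together with $h_{\lambda,f}\geq0$ then gives the genuine limit. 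I do not expect a real obstacle here: the only points needing care are the convergence of the inner sum and the interchange of the finite $\lambda$-sum with the limit, both controlled by the uniform BQI bound for $a>\tfrac12$ (Lemma \ref{Lemma properties of abel measures}) and the finiteness of $\mathrm{supp}(\lambda)$.
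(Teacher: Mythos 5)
Your proposal is correct and follows the same route as the paper: the paper's proof is the one-line observation that this is a special case of Corollary \ref{Corollary RN parameters on group and boundary}, since for a Liouville measure the Poisson boundary is a point and hence each limit equals $f(1)=0$. Your write-up simply supplies the (routine) details of that reduction — the identification of $D_f(g^{-1}\mu_a||\mu_a)$ with the sum appearing in that corollary and the interchange of the finite $\lambda$-sum with the limit — which the paper leaves implicit.
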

\begin{proof}
It is a special case of Corollary \ref{Corollary RN parameters on group and boundary}, as in this case, \(\mathcal{B}(G,\mu)\) is a point.
\end{proof}

Using the (proven) Furstenberg conjecture (see \cite{BoundaryEntropy}) we conclude the following characterization of amenability, which includes Theorem \ref{Theorem A}:

\begin{theorem}\label{Theorem application for amenability}
The following are equivalent for a discrete countable group \(G\):
\begin{enumerate}
    \item 
    \(G\) is amenable.
    \item
    For each \(S\subset G\) finite and \(\epsilon>0,f\in C(0,\infty)\) with \(f(1)=0\) there exists a symmetric probability measure \(\eta\) on \(G\) such that:
    \[\forall g\in S: \:\:\:|\intop_G f\bigg(\frac{dg\eta}{d\eta}\bigg)d\eta|<\epsilon\]
    \item
    \(G\) has KL-almost invariant symmetric measures. That is, for any finite \(S\subset G\) and \(\epsilon>0\), there is a symmetric probability measure \(\eta\) on \(G\) with 
    \[\forall g\in S: \:\:\:D_{KL}(g\eta||\eta)<\epsilon\]
    \item 
    For any QI \(G\)-space \(X\), finitely supported probability measure \(\lambda\) on \(G\) and a convex function \(f\) with \(f(1)=0\) one has \(I_{\lambda,f}(X)=0\).
\end{enumerate}
Moreover, the \(\eta\)'s in items (2),(3) can be chosen to commute.
\end{theorem}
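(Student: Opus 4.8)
The plan is to prove the cycle of implications $(1)\Rightarrow(4)\Rightarrow(3)\Rightarrow(1)$, together with $(1)\Rightarrow(2)$ and $(2)\Rightarrow(3)$, while tracking the symmetry and commutativity of the auxiliary measures throughout. The heart of the argument is $(1)\Rightarrow(4)$, and this is where the work of the previous sections pays off. Assume $G$ is amenable. By the (proven) Furstenberg conjecture, $G$ admits a generating Liouville measure $\mu$; moreover one can arrange $\mu$ to be symmetric (replace $\mu$ by $\frac{1}{2}(\mu+\check\mu)$, which is still Liouville and generating since a convex combination of a Liouville measure with anything whose boundary it dominates remains Liouville — or simply invoke the standard fact that amenable groups carry symmetric Liouville measures). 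Now apply Corollary~\ref{Corollary entropy and Liouville measure}: for any finitely supported $\lambda$ and any convex $f$ with $f(1)=0$, we get $\lim_{a\to1^-}h_{\lambda,f}(G,\mu_a)=0$. Since each $\mu_a$ is a fully supported probability measure on $G$, this shows $I_{\lambda,f}(G)=0$. Finally, Corollary~\ref{Cor Group-Space-Entropy} gives $I_{\lambda,f}(X)\le I_{\lambda,f}(G)=0$ for every QI $G$-space $X$, and since $h_{\lambda,f}\ge0$ always, we conclude $I_{\lambda,f}(X)=0$.

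For the easy implications: $(4)\Rightarrow(3)$ is immediate by specializing to $X=G$, $f=-\ln$ (so $D_f=D_{KL}$), and $\lambda=\delta_g$ for each $g\in S$ — though to obtain a single $\eta$ working for all $g\in S$ simultaneously one takes $\lambda$ to be, say, the uniform measure on $S\cup S^{-1}$ and uses that $D_{KL}\ge0$ so that smallness of the $\lambda$-average forces smallness of each term (up to a factor $|S\cup S^{-1}|$). The implication $(3)\Rightarrow(1)$ is the "easy converse" mentioned after Theorem~\ref{Theorem A}: KL-almost invariant measures are, via Pinsker's inequality (Theorem~4.19 in \cite{Boucheron2013ConcentrationI}), almost invariant in total variation, hence give a Reiter/Følner-type condition certifying amenability. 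For $(1)\Rightarrow(2)$ one argues exactly as in $(1)\Rightarrow(4)$ but records that the approximating measures can be taken symmetric: since $\mu$ is symmetric, so is each $\mu_a=(1-a)\sum a^n\mu^{*n}$, and Corollary~\ref{Corollary entropy and Liouville measure} applied with $\lambda=\delta_g$ (or its symmetrization) yields $\int_G f\big(\frac{dg\eta}{d\eta}\big)\,d\eta\to0$ with $\eta=\mu_a$ symmetric. Then $(2)\Rightarrow(3)$ follows by taking $f=-\ln$ and noting $D_{KL}(g\eta\|\eta)=\int_G -\ln\big(\frac{dg^{-1}\eta}{d\eta}\big)d\eta$, which for symmetric $\eta$ is controlled by the quantity in (2).

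The main obstacle — really the only subtle point — is the "Moreover" clause asserting that the $\eta$'s in items 2 and 3 can be chosen to \emph{commute}, i.e. to be supported on a single finitely generated abelian (indeed one expects: cyclic or at least commuting) subgroup in a compatible way across the family, or more precisely that the convolution powers $\eta^{*k}$ all commute. This is automatic here because every $\eta=\mu_a$ we produce is a function of the single fixed measure $\mu$ — the measures $\{\mu_a : \tfrac12<a<1\}$ pairwise commute under convolution, being power series in the commuting element $\mu$ of the (commutative!) convolution algebra generated by $\mu$. So once we fix the symmetric Liouville $\mu$ at the very start, the entire approximating family consists of mutually commuting symmetric probability measures, and the commutativity assertion requires no extra argument. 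I would therefore structure the write-up so that $\mu$ is chosen once and for all, emphasize that $\mu_a$ are polynomials/power series in $\mu$, and then let the three analytic implications fall out of Corollary~\ref{Corollary entropy and Liouville measure} and Corollary~\ref{Cor Group-Space-Entropy} as above, with $(3)\Rightarrow(1)$ quoting Pinsker.
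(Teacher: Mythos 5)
Your proposal is correct and follows essentially the same route as the paper: a symmetric Liouville measure from Kaimanovich--Vershik, the ultralimit corollary to get $\lim_{a\to1^-}h_{\lambda,f}(G,\mu_a)=0$, Corollary~\ref{Cor Group-Space-Entropy} to pass to arbitrary QI $G$-spaces, Pinsker plus Reiter for the converse, and commutativity of the $\mu_a$'s because they are all power series in the single fixed $\mu$. The only quibble is your parenthetical claim that $\tfrac12(\mu+\check\mu)$ of a Liouville $\mu$ is again Liouville, which is not justified (and is close to the open ``Liouville dependence'' problem the paper itself discusses); drop it and rely, as you also suggest and as the paper does, on the fact that Theorem~4.4 of \cite{BoundaryEntropy} directly produces a \emph{symmetric} Liouville measure.
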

\begin{proof}
\(\)\\
\((1)\implies(2)\): From Theorem 4.4 of \cite{BoundaryEntropy} there is a generating symmetric measure \(\mu\) such that \((G,\mu)\) is Liouville. In particular the Furstenberg-Poisson boundary is a point and thus by Corollary \ref{Corollary RN parameters on group and boundary} we conclude 
\[\lim_{a\to 1^-} \intop_G f\bigg(\frac{dg\mu_a}{d\mu_a}\bigg)d\mu_a=f(1)=0\]
and thus taking \(\eta=\mu_a\) for \(a<1\) large enough will satisfy the desired property.\\
For the moreover part, note that the measures \(\mu_a\) commute.\\ 
\((2)\implies (4)\): From \((2)\) it is clear that \(I_{\lambda,f}(G)=0\). From Corollary \ref{Cor Group-Space-Entropy} we conclude \((4)\).\\
\((4)\implies (3)\) is clear. Also \((3)\) is a special case of \((2)\) for \(f(t)=t\ln(t)\) (so the moreover part follows).\\
\((3)\implies (1)\) follows from Pinsker's classical inequality \(||m-\nu||\leq \sqrt{2D_{KL}(m||\nu)}\) (see for example Theorem 4.19 in  \cite{Boucheron2013ConcentrationI}) and Reiter's condition for amenability. \\
\end{proof}

\begin{remark}
\begin{enumerate}
    \item
    Note that in order to find \(\ell^1\)-almost invariant measures on amenable groups one can use measures supported on finite sets (for example F{\o}lner sets). However, such measures can not be the desired measures \(\eta\) given in item \((3)\) of Theorem \ref{Theorem application for amenability}. Indeed, for the KL-divergence to be finite, the measure has to be supported on the whole group (if \(G\) is finitely generated).
    \item
    If in Theorem \ref{Theorem application for amenability} one doesn't care about the commutativity (and symmetry) of \(\eta\)'s, it is possible to prove the result with the same machinery of ultralimits, without having to use Furstenberg's (proven) conjecture.\\
    This will be explained in the sequel paper \cite{ultrAmenable}, where the framework of all amenable actions is covered.
\end{enumerate}
\end{remark}
We say that a probability measure \(\eta\) on a finitely generated group \(G\) is \emph{SAS} (symmetric, adapted, smooth)  if it is symmetric, generating and \(\sum \eta(g)e^{\epsilon|g|}<\infty\) for some \(\epsilon>0\) and word metric \(|\cdot|\).\\
Using Corollary \ref{Corollary entropy and Liouville measure} one sees the following property for Liouville groups:
\begin{corollary}\label{corollary Liouville group posses SAS almost invariant measures}
Let \(G\) be a finitely generated group with a symmetric finitely supported Liouville measure. Then, there is a sequence \((\eta_n)\) of SAS measures that is \(D_f\)-almost invariant, for any \(f\)-divergence. That is, for any \(g\in G\) we have \(D_{f}(g\eta_n||\eta_n)\to0\).
\end{corollary}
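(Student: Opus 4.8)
The plan is to take for $(\eta_n)$ the Abel measures of the given symmetric, finitely supported Liouville measure $\mu$. Recall from Section~\ref{section Poisson boundary as ultralimit} the probability measures $\mu_a=(1-a)\sum_{n=0}^{\infty}a^n\mu^{*n}$ for $a\in(0,1)$, and set $\eta_n:=\mu_{a_n}$ for any sequence $a_n\in(0,1)$ with $a_n\to1$, say $a_n=1-1/n$. Two things have to be verified: that each $\eta_n$ is SAS, and that the single sequence $(\eta_n)$ is $D_f$-almost invariant for every $f$-divergence simultaneously. The second point will be read off directly from Corollary~\ref{Corollary entropy and Liouville measure}; the only genuine estimate is the exponential-moment bound in the definition of SAS, and that is precisely where finiteness of $\mathrm{supp}(\mu)$ enters.

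First I would check that $\mu_a$ is SAS for each fixed $a\in(0,1)$. Symmetry is inherited: $\mu$ symmetric forces each $\mu^{*n}$ symmetric, hence $\mu_a$ symmetric. Since $\mathrm{supp}(\mu)$ is symmetric and generating, the submonoid of $G$ it generates is all of $G$, so $\mathrm{supp}(\mu_a)=\bigcup_{n\ge0}\mathrm{supp}(\mu^{*n})=G$; in particular $\mu_a$ is fully supported, hence generating. For the exponential moment, fix a finite generating set and $R$ with $\mathrm{supp}(\mu)\subseteq\{g:|g|\le R\}$ (possible as $\mathrm{supp}(\mu)$ is finite and $G$ finitely generated), so that $\mathrm{supp}(\mu^{*n})\subseteq\{g:|g|\le nR\}$. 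Then for $\epsilon>0$,
\[\sum_{g\in G}\mu_a(g)\,e^{\epsilon|g|}=(1-a)\sum_{n\ge0}a^n\sum_{g}\mu^{*n}(g)\,e^{\epsilon|g|}\le(1-a)\sum_{n\ge0}\bigl(a\,e^{\epsilon R}\bigr)^{n},\]
which is finite as soon as $a\,e^{\epsilon R}<1$, i.e.\ for any $0<\epsilon<R^{-1}\ln(1/a)$, a nonempty range because $a<1$. Hence $\mu_a$ is SAS (with a value of $\epsilon$ that may depend on $a$, which is all the definition requires).

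Next I would deduce $D_f$-almost invariance. Unwinding the definition of the Furstenberg $\lambda,f$-entropy at $\lambda=\delta_{g^{-1}}$ gives $h_{\delta_{g^{-1}},f}(\nu)=D_f(g\nu||\nu)$ for every BQI measure $\nu$. Since $\mu$ is Liouville, Corollary~\ref{Corollary entropy and Liouville measure}, applied to the finitely supported measure $\delta_{g^{-1}}$ and the convex function $f$, gives
\[\lim_{a\to1^-}D_f(g\mu_a||\mu_a)=\lim_{a\to1^-}h_{\delta_{g^{-1}},f}(\mu_a)=0\]
for every $g\in G$ and every convex $f$ with $f(1)=0$. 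As this is a one-sided limit in $a$ that holds with no restriction on $f$ or $g$, evaluating it along $a_n\to1$ yields $D_f(g\eta_n||\eta_n)\to0$ for each fixed $g$ and each fixed $f$. Thus the single sequence $(\eta_n)$ is $D_f$-almost invariant for all $f$-divergences at once, and no diagonalization over $g$ or over $f$ is needed.

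I do not expect a real obstacle here: once the Abel measures $\mu_a$ — built in Section~\ref{section Poisson boundary as ultralimit} for the unrelated purpose of modelling the Poisson boundary — are seen to be SAS, Corollary~\ref{Corollary entropy and Liouville measure} does the rest. The one hypothesis genuinely used is the finiteness of $\mathrm{supp}(\mu)$, without which the exponential-moment estimate fails; and the only point worth flagging is that, in contrast to $\ell^1$-almost invariant measures (which may be finitely supported F\o{}lner measures), an SAS measure is necessarily fully supported, so a limiting construction of this kind is unavoidable.
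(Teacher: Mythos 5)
Your proposal is correct and follows the same route as the paper: take $\eta_n=\mu_{1-1/n}$, observe that finite support and symmetry of $\mu$ make these SAS, and apply Corollary \ref{Corollary entropy and Liouville measure} (your specialization to $\lambda=\delta_{g^{-1}}$ is exactly how that corollary yields $D_f(g\eta_n||\eta_n)\to0$). The paper leaves the SAS verification as a remark; your exponential-moment computation fills in that detail correctly.
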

\begin{proof}
This follows from Corollary \ref{Corollary entropy and Liouville measure} by noting that if \(\mu\) is finitely supported and symmetric, then \(\mu_a\) are SAS for any \(a<1\) and taking \(\eta_n=\mu_{1-1/n}\).
\end{proof}
\begin{remark}
Note that if a sequence of probability measures \((\eta_n)\) on \(G\) is uniformly BQI and \(\ell^1\)-almost invariant, then it is \(D_f\)-almost invariant for any \(f\)-divergence. Indeed, from \(\ell^1\)-almost invariance one deduces that for any non-principle ultrafilter \(\mathcal{U}\) on \(\mathbb{N}\) the ultralimit measure \(\mathcal{U}\lim \eta_n\) is \(G\)-invariant. By Lemma \ref{Lemma convergence of RN parameters} we conclude \(\mathcal{U}\lim D_f(g\eta_n||\eta_n)=D_f(g\mathcal{U}\lim\eta_n||\mathcal{U}\lim\eta_n)=0\).\\
The measures built in Theorem \ref{Theorem application for amenability} and Corollary \ref{corollary Liouville group posses SAS almost invariant measures} are uniformly BQI. 
\end{remark}
The next theorem concerns a fixed point result for amenable groups on RN models of stationary actions. It is a reformulation of a result from \cite{FixedPointAmenable}. However our proof is very different and is based on the problem of \enquote{Entropy minimizing}:

\begin{theorem}\label{Theorem fixed point for stationary actions}
Let \(G\) be an amenable group, and let \(\mu\) be a generating symmetric probability measure.
Suppose \(X\) is a compact topological space, with a continuous \(G\)-action and that \(\nu\) is a \(\mu\)-stationary measure on \(X\) such that the Radon Nikodym derivatives \(\frac{dg\nu}{d\nu}\) are continuous. \\
Then there is a continuous measure preserving factor \(\pi:X\to Y\) so that
\(Y^{G}\), the set of \(G\)-invariant points of \(Y\), contains exactly one point. In particular, \textbf{the action of an amenable group on the RN model of its Furstenberg-Poisson boundary admits a unique fixed point.}
\end{theorem}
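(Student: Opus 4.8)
Take $Y$ to be the canonical RN model $(X,\nu)_{RN}$ of Proposition~\ref{Proposition RN model for RN factor}. Since the Radon--Nikodym derivatives $\frac{dg\nu}{d\nu}$ are by hypothesis already continuous on $X$, the algebra $A_{RN}=C^{*}\big(\tfrac{dg\nu}{d\nu}\mid g\in G\big)$ is a subalgebra of $C(X)$, so the induced map $\pi\colon X\to Y$ is a continuous, measure preserving $G$-factor. Hence it suffices to prove that $Y^{G}$ is a single point; the displayed consequence for the Poisson boundary is then immediate, since there $X$ is its own RN model (Corollary~\ref{corollary RN model for poisson boundary}), so one may take $Y=X$. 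Introduce $\psi:=-\sum_{g}\mu(g)\ln\frac{dg^{-1}\nu}{d\nu}\in C(X)$; since $\mu$ is symmetric this also equals $-\sum_{g}\mu(g)\ln\frac{dg\nu}{d\nu}$. Stationarity gives $\sum_{g}\mu(g)\frac{dg\nu}{d\nu}=\frac{d\mu\ast\nu}{d\nu}=1$, so Jensen's inequality for $\ln$ yields $\psi\ge 0$ pointwise, with $\psi(x)=0$ exactly when $\frac{dg\nu}{d\nu}(x)=1$ for every $g\in\operatorname{supp}\mu$. Feeding this $\nu$ into Corollary~\ref{Cor KL-entropy inequality} and using $\frac{d\mu\ast\nu}{d\nu}=1$ collapses that inequality to
\[
h_{\mu}(m)\ \ge\ \intop_{X}\psi\,dm\qquad\text{for all }m\in M(X).
\]

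Now bring in amenability: by Theorem~\ref{Theorem application for amenability}(4) one has $I_{\mu}(X)=0$, so pick $m_{n}\in M(X)$ with $h_{\mu}(m_{n})\to 0$. The displayed bound together with $\psi\ge 0$ forces $\intop_{X}\psi\,dm_{n}\to 0$, while Pinsker's inequality gives $\lVert g^{-1}m_{n}-m_{n}\rVert^{2}\le 2D_{KL}(g^{-1}m_{n}\,\|\,m_{n})\le 2\mu(g)^{-1}h_{\mu}(m_{n})\to 0$ for each $g\in\operatorname{supp}\mu$. Passing to a weak-$*$ accumulation point $m_{\infty}$ of $(m_{n})$, the second fact makes $m_{\infty}$ invariant under $\operatorname{supp}\mu$, hence $G$-invariant ($\mu$ generates), and the first gives $\intop_{X}\psi\,dm_{\infty}=0$, so $\psi\equiv 0$ on the compact $G$-invariant set $K:=\operatorname{supp}(m_{\infty})$. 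Thus $\frac{dg\nu}{d\nu}\equiv 1$ on $K$ for every $g\in\operatorname{supp}\mu$; writing a general $g\in G$ as a word in $\operatorname{supp}\mu$ and iterating the cocycle identity $\frac{dgh\nu}{d\nu}(x)=\frac{dh\nu}{d\nu}(g^{-1}x)\cdot\frac{dg\nu}{d\nu}(x)$ — legitimate because $K$ is $G$-invariant — a short induction on word length upgrades this to $\frac{dg\nu}{d\nu}\equiv 1$ on $K$ for all $g\in G$. Hence every point of $\pi(K)\subseteq Y$ carries the trivial RN-cocycle; since distinct points of the RN model are separated by the RN-cocycle, $\pi(K)$ is a single point $y_{0}$, and, being the image of a $G$-invariant set, it is $G$-fixed.

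For uniqueness, let $y_{1}\in Y^{G}$. Evaluating the cocycle identity at the fixed point $y_{1}$ shows $\phi(g):=\ln\frac{dg\nu}{d\nu}(y_{1})$ is a homomorphism $G\to(\mathbb{R},+)$, so $\phi(g^{-1})=-\phi(g)$ and, by symmetry of $\mu$, $\sum_{g}\mu(g)\phi(g)=0$. Stationarity at $y_{1}$ reads $\sum_{g}\mu(g)e^{\phi(g)}=\frac{d\mu\ast\nu}{d\nu}(y_{1})=1$, and strict convexity of $\exp$ in Jensen's inequality forces $\phi$ to be constant on $\operatorname{supp}\mu$; combined with $\phi(g^{-1})=-\phi(g)$ this constant must be $0$, and since $\operatorname{supp}\mu$ generates $G$, $\phi\equiv 0$. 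So $y_{1}$ has the trivial RN-cocycle and therefore equals $y_{0}$, giving $Y^{G}=\{y_{0}\}$.

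I expect the real work to sit in the second paragraph: turning the abstract vanishing $I_{\mu}(X)=0$ into an \emph{honest} $G$-invariant measure whose support trivializes the \emph{whole} cocycle requires coordinating three ingredients — Corollary~\ref{Cor KL-entropy inequality}, to convert entropy-control of the $m_{n}$ (expressed through their own derivatives) into control of the fixed, $\nu$-adapted function $\psi$; Pinsker's inequality, to produce genuine $G$-invariance in the weak-$*$ limit; and the cocycle bootstrap along words in $\operatorname{supp}\mu$, to promote the generator-only vanishing of $\psi$ to all of $G$ — none of which, on its own, gives more than a statement about generators.
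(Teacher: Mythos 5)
Your overall strategy is close in spirit to the paper's own proof: both arguments characterize the fixed points of the RN model as exactly the points where the whole RN-cocycle is trivial (your uniqueness paragraph, via multiplicativity of \(g\mapsto\frac{dg\nu}{d\nu}(y_1)\), symmetry of \(\mu\) and Jensen, is essentially the paper's proof that \(E=X^{G}\)), and both produce such a point by playing Corollary \ref{Cor KL-entropy inequality} against the vanishing of the minimal entropy number supplied by amenability. There is, however, a genuine gap in your second paragraph: the theorem does not assume \(\mu\) finitely supported, yet you invoke Theorem \ref{Theorem application for amenability}(4) to get \(I_{\mu}(X)=0\) and Corollary \ref{Cor KL-entropy inequality} to get \(h_{\mu}(m)\geq\int_X\psi\,dm\), and both results are stated and proved only for finitely supported measures. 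Neither extends for free. Item (4) controls \(D_{KL}(g\eta||\eta)\) only for \(g\) in a prescribed finite set, so the tail \(\sum_{g\notin S}\mu(g)D_{KL}(g^{-1}\eta||\eta)\) of \(h_{\mu}(\eta)\) is uncontrolled and \(I_{\mu}(X)=0\) is not justified for infinitely supported \(\mu\); moreover, if \(H(\mu)=\infty\) your function \(\psi\) need not even be a well-defined continuous function, since \(|\ln\frac{dg\nu}{d\nu}|\) is bounded only by \(-\ln\mu(g)\). You also cannot simply truncate \(\mu\), because the identity \(\frac{d\mu\ast\nu}{d\nu}=1\), on which the pointwise bound \(\psi\geq 0\) rests, is destroyed by truncation. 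This is exactly the difficulty the paper's proof is organized around: it extracts by compactness a finite symmetric set \(S_0\) carrying a uniform Jensen gap \(\epsilon_0\), replaces \(\mu\) by a finitely supported symmetric \(\kappa\) agreeing with \(\mu\) on \(S_0\) with \(\kappa\leq(1+\delta)\mu\) (hence \(\frac{d\kappa\ast\nu}{d\nu}\leq 1+\delta\)), and derives \(I_{\kappa}(X)\geq\epsilon_0-\delta-\ln(1+\delta)>0\), contradicting \(I_{\kappa}(X)=0\). As written, your argument is complete only under the extra hypothesis that \(\mu\) is finitely supported.

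The parts of your proof not touched by this issue are correct, and your existence step is in one respect cleaner than the paper's: rather than arguing by contradiction, you directly manufacture a point of \(E=\{y\mid\forall g:\ \frac{dg\nu}{d\nu}(y)=1\}\) by taking a weak-\(*\) limit \(m_{\infty}\) of entropy near-minimizers, using Pinsker to make it \(G\)-invariant, and bootstrapping the vanishing of the cocycle from \(\operatorname{supp}\mu\) to all of \(G\) via the cocycle identity on the \(G\)-invariant set \(\operatorname{supp}(m_{\infty})\). Once the finite-support issue is repaired (e.g.\ by running your argument with the paper's \(\kappa\) and keeping track of the \(O(\delta)\) error terms), this is a valid alternative to the paper's compactness-and-contradiction step.
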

\begin{proof}
First, by changing \(\mu\) to \(\sum_{n=1}^{\infty} 2^{-n}\mu^{*n}\), we may assume \(\mu\) is supported on all of \(G\).
By taking \(Y\) to be the RN model of the Radon Nikodym factor of \(X\), we may suppose that the Radon-Nikodym derivatives separates points of \(X\), and show that \(X^{G}\) consists of one point.\\
Denote \(E=\{x\in X\big|\:\forall g\in G:\: \frac{dg\nu}{d\nu}(x)=1\}\).\\
We claim that \(E=X^{G}\).
Indeed, \(E\) is \(G\)-invariant set, as the RN functions separates points, we see that \(E\) contains at most one point, thus \(E\subseteq X^{G}\). For the other inclusion: if \(x\in X^{G}\) then \(g\mapsto \frac{dg\nu}{d\nu}(x)\) is a multiplicative homomorphism, in particular: \(\frac{dg^{-1}\nu}{d\nu}(x)=(\frac{dg\nu}{d\nu}(x))^{-1}\), but then by the symmetry of \(\mu\) we get \(1=\sum_{g} \mu(g)\frac{dg\nu}{d\nu}(x)=\frac{1}{2}\sum_{g}\mu(g)(\frac{dg\nu}{d\nu}(x)+(\frac{dg\nu}{d\nu}(x))^{-1})\geq 
\sum_{g}\mu(g)=1\), so we have equality which implies \(\frac{dg\nu}{d\nu}(x)=1\) for any \(g\in G\), so \(x\in E\).\\
So it is enough to show \(E\neq\emptyset\). Assume for the sake of contradiction that there is no point \(x_0\) with \(\frac{dg\nu}{d\nu}(x_0)=1\) for all \(g\). By compactness of \(X\) we get there is a finite \(S_0\subset G\) so that there is no point \(x_0\) with \(\frac{dg\nu}{d\nu}(x_0)=1\) for all \(g\in S_0\). We may assume \(e\in S_0, S_0^{-1}=S_0\). Note that in the Jensen inequality 
\[-\sum_{s\in S_0} \mu(s)\ln(\frac{ds^{-1}\nu}{d\nu})\geq -\mu(S_0)\ln\Big(\sum_{s\in S_0} \frac{\mu(s)}{\mu(S_0)}\frac{ds^{-1}\nu}{d\nu}\Big)\]
There is no equality, by assumption on \(S_0\), but both sides are continuous functions, so we get some \(\epsilon_0>0\) with 
\[-\sum_{s\in S_0} \mu(s)\ln(\frac{ds^{-1}\nu}{d\nu})\geq -\mu(S_0)\ln\Big(\sum_{s\in S_0} \frac{\mu(s)}{\mu(S_0)}\frac{ds^{-1}\nu}{d\nu}\Big)+\epsilon_0\quad\quad\quad(1)\]
Let \(\delta>0\), note that there is a symmetric probability measure \(\kappa\) on \(G\) which has finite support, agrees with \(\mu\) on \(S_0\), and \(\kappa\leq (1+\delta)\cdot \mu\).
Indeed, take \(A\subset G\setminus S_0\) finite and symmetric with \(\mu(A)\geq \frac{\mu(G\setminus S_0)}{1+\delta}\) and define \(\kappa=\mu|_{S_0}+ \frac{1-\mu(S_0)}{\mu(A)}\mu|_{A}\).\\
We will show 
\[I_{\kappa}(X)\geq \epsilon_0 -\delta-\ln(1+\delta)\quad\quad\quad(*)\]
Using Theorem \ref{Theorem application for amenability} we see \(I_{\kappa}(X)=0\). This implies \(\epsilon_0\leq \delta+\ln(1+\delta)\), choosing an appropriate \(\delta>0\), we obtain a contradiction.\\
We now verify the inequality \((*)\). Using Jensen and (1):
\begin{multline*}
    -\sum_{g\in G} \kappa(g)\ln(\frac{dg^{-1}\nu}{d\nu})=-\sum_{g\in S_0} \mu(g)\ln(\frac{dg^{-1}\nu}{d\nu})-\sum_{g\in G\setminus S_0} \kappa(g)\ln(\frac{dg^{-1}\nu}{d\nu})
    \geq\\
    -\mu(S_0)\ln\Big(\sum_{s\in S_0} \frac{\mu(s)}{\mu(S_0)}\frac{ds^{-1}\nu}{d\nu}\Big)+\epsilon_0-
    \mu(G\setminus S_0)\ln\Big(\sum_{g\in G\setminus S_0} \frac{\kappa(g)}{\kappa(G\setminus S_0)}\frac{dg^{-1}\nu}{d\nu}\Big)\geq -\ln\Big(\frac{d\:\kappa\ast \nu}{d\nu}\Big)+\epsilon_0
\end{multline*}
Note that \(\kappa\ast\nu \leq (1+\delta)\mu\ast\nu=(1+\delta)\nu\) and thus \(\frac{d\kappa\ast\nu}{d\nu}\leq (1+\delta)\).\\
For any \(m\in M(X)\) (that is, in the measure class of \(\nu\)) we get by Corollary \ref{Cor KL-entropy inequality} :
\begin{multline*}
h_{\kappa}(m)\geq \intop_{X} \big(1-\frac{d\kappa \ast \nu}{d\nu}-\sum_{g\in G}\kappa(g)\ln(\frac{dg^{-1}\nu}{d\nu})\big) \:dm\geq\\
\intop_{X} \Big(1-\frac{d\kappa \ast \nu}{d\nu}- \ln\big(\frac{d\:\kappa\ast \nu}{d\nu}\big)+\epsilon_0\Big) \: dm\geq 1-(1+\delta)-\ln(1+\delta)+\epsilon_0
\end{multline*}
Taking infimum over \(m\) we obtain \(I_{\kappa}(X)\geq \epsilon_0 -\delta-\ln(1+\delta)\) and the proof is complete.
\end{proof}

\begin{remark}
One should note that for the lamplighter group, \(G=\mathbb{Z}/2\:\wr\:\mathbb{Z}^{d}\) with \(d >2 \), and the uniform symmetric measure \(\mu\) coming from the standard generating set \(\pm e_i, \delta_0\), the space \(X=\prod_{\mathbb{Z}^{d}}\mathbb{Z}/2\) of final configuration with the measure of final configuration, is a \(\mu\)-stationary space \((X,\nu)\) which is in fact the Furstenberg-Poisson boundary (see \cite{lyons2015poisson} for the Furstenberg-Poisson boundary of Lamplighter groups). However, there are no fixed points in this action. The reason is that \(X\) is not RN model. More precisely, for \(0\in X\), and \(g=e_1\), one can show that \(\frac{dg \nu}{d\nu}\) is not continuous at \(0\). We thank Gady Kozma for a discussion clarifying this issue.\\
This has the following implication: consider \(A=C^{*}\Bigg(C(X)\cup\{\frac{dg\nu}{d\nu}\}_{g\in G}\Bigg)\subset L^{\infty}(X)\) and let \(Y\) be the Gelfand dual. Then there is a continuous mapping \(\pi:Y\to X\) which is a measurable isomorphism. Moreover, \(Y\) is an RN-model, and thus by Theorem \ref{Theorem fixed point for stationary actions} we see that the \(G\)-invariant closed set \(S:=\{y\big|\:\forall g\in G:\: \frac{dg\nu}{d\nu}(y)=1\}\subset Y\) is nonempty. Since \(X\) is minimal, we conclude \(\pi(S)=X\), however \(S\) has measure zero. This means that \(X\) is covered by the null-set \(S\).

\end{remark}
\begin{remark}\label{Remark fixed point}
We believe that an existence of a fixed point on the RN-model for a Furstenberg-Poisson boundary \(\mathfrak{B}(G,\mu)\) for a symmetric generating \(\mu\) implies that \(G\) is amenable. However, we haven't been able to prove this.
\end{remark}

\section{Free groups and Entropy}\label{section Free groups and Entropy}

Let \(F=F_d\) be a free group on \(d\geq2\) generators, that we denote by \(a_1,\dots,a_d\). Denote also \(a_{-i}=a_i^{-1}\).\\
Our main goal in this section, is given \(f,\lambda\) where \(\lambda\) is symmetric and supported on the generators, to find \(I_{\lambda,f}(F):=\inf_{\nu\in M(F)} h_{\lambda,f}(F,\nu)\).

\subsection{The Boundary of the Free group and harmonic measures}\label{subsection The Boundary of the Free group and harmonic measures}

Consider the standard topological boundary \(X=\partial F\) of \(F\) as the space of infinite length reduced words in the free generators, 
that is, the subset of \(\{a_{\pm1},\dots,a_{\pm d}\}^{\mathbb{N}}\) consisting of words \((w_n)_{n\geq0}\) with \(w_{n+1}\neq w_n^{-1}\).
It is clear that \(X\) is a compact metrizable space with the standard continuous \(F\) action. \\
For any \(\gamma\in F\) we denote by \(X_\gamma\) the subspace of \(X\) consisting of words that begins with \(\gamma\). The collection \(\{X_{\gamma}\}_{\gamma\in F}\) consists of clopen sets and gives a basis for the topology of \(X\).\\
In this subsection we fix a generating probability measure \(\mu\) on \(F\) which is supported on \(\{a_{ i}\}_{i=\pm1,\dots,\pm d}\) and denote \(p_i=\mu(a_i)\).\\
\(\)\\
The next lemma and most of the following discussion can be found in section 2 of \cite{FreeBoundary}. We include the proof of the lemma in order to make our results (Theorem \ref{Theorem entropy for F action on F}) explicit.
\begin{lemma}\label{Lemma numbers q for boundary}
There exist unique \(1>q_i>0\:\: (i=\pm1,\dots,\pm d)\) such that for \(j=\pm1,\dots\pm d\):
\[q_j=p_j+q_j\sum_{i\neq j} p_i q_{-i}\]
\end{lemma}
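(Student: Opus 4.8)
The plan is to realize the numbers $q_j$ as a fixed point of an explicit contraction on a compact set and then extract them coordinate-by-coordinate. Write $\mathbf{q}=(q_j)_{j=\pm1,\dots,\pm d}\in[0,1]^{2d}$ and consider the map $\Phi:[0,1]^{2d}\to\mathbb{R}^{2d}$ whose $j$-th coordinate is $\Phi_j(\mathbf{q})=p_j+q_j\sum_{i\neq j}p_iq_{-i}$. A solution to the system in the statement is exactly a fixed point of $\Phi$, so the task splits into three pieces: (a) $\Phi$ maps a suitable compact convex set into itself; (b) $\Phi$ has a fixed point there; (c) the fixed point is unique and has all coordinates strictly in $(0,1)$.

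For (a), I would first note that $\Phi_j(\mathbf{q})\ge p_j>0$ whenever all $q_i\ge0$ (since $\mu$ is generating and supported on the generators, $p_j>0$). For the upper bound, observe $\Phi_j(\mathbf{q})\le p_j+q_j\sum_{i\neq j}p_i\le p_j+\sum_{i\neq j}p_i=1$ when $\mathbf{q}\in[0,1]^{2d}$, with the first inequality using $q_{-i}\le1$ and the factor $q_j\le 1$. Hence $\Phi$ preserves $[0,1]^{2d}$, and in fact maps it into $\prod_j[p_j,1]$. A cleaner route for the existence of a strictly positive, strictly-less-than-one fixed point is to iterate: starting from $\mathbf{q}^{(0)}=0$, the sequence $\mathbf{q}^{(n+1)}=\Phi(\mathbf{q}^{(n)})$ is coordinatewise nondecreasing (because $\Phi$ is monotone in each $q_i$ on the positive orthant, as all coefficients are nonnegative) and bounded above by $\mathbf{1}$, hence converges to a fixed point $\mathbf{q}$ with $q_j\ge p_j>0$; and since $\mathbf{q}=\Phi(\mathbf{q})$ with $q_j=1$ forcing $\sum_{i\neq j}p_iq_{-i}=1-p_j=\sum_{i\neq j}p_i$, i.e.\ $q_{-i}=1$ for all $i\neq j$, one propagates $q_k=1$ for all $k$ and then the equation for any fixed index gives $1=p_k+\sum_{i\neq k}p_i=1$, which is consistent — so I must rule this out separately.

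To rule out the all-ones solution and get genuine strict inequality and uniqueness, I would use the generating hypothesis more carefully: because $F$ is free and $\mu$ is supported on the generators with the reduced-word boundary, the hitting probabilities $q_j$ admit a probabilistic interpretation (probability that the $\mu$-random walk, started at $e$, ever hits the cone $X_{a_j}$, or rather the first-letter structure), and these are genuinely $<1$ since the walk is transient and escapes to the boundary — but to keep the argument self-contained I would instead argue directly: suppose $\mathbf{q},\mathbf{q}'$ are two solutions in $[0,1]^{2d}$ with, say, $\mathbf{q}\le\mathbf{q}'$ coordinatewise (one can always reduce to this by taking coordinatewise min/max and checking these are sub/super-solutions, or by the monotone iteration which produces the minimal solution); then subtracting the equations and using that $\sum_i p_i=1$ with $d\ge2$ so that each sum $\sum_{i\neq j}p_i<1$, one obtains $q_j'-q_j\le (q_j'-q_j)\sum_{i\neq j}p_iq'_{-i} + q_j\sum_{i\neq j}p_i(q'_{-i}-q_{-i})$, and a weighted-maximum argument over $j$ (choosing $j$ maximizing $q_j'-q_j$) forces the difference to vanish because the total weight $\sum_{i\neq j}p_i q'_{-i}$ is strictly below $1$ once we know some $q'_{-i}<1$; the base case $q_k<1$ for at least one $k$ follows because if all $q_k=1$ were the minimal solution it would contradict the monotone-iteration limit being $<1$, which I check by showing $\mathbf{q}^{(1)}=\Phi(0)=(p_j)_j$ already has $\Phi_j(\mathbf{p})=p_j+p_j\sum_{i\neq j}p_ip_{-i}<p_j+p_j(1-p_j)<1$ and propagating the bound $q_j\le 1-c$ for a uniform $c>0$ through the iteration.

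I expect the main obstacle to be exactly this last point — cleanly establishing strict inequality $q_j<1$ and uniqueness simultaneously, since the naive contraction-mapping estimate degenerates at the boundary of the cube (the "contraction factor" $\sum_{i\neq j}p_iq_{-i}$ can approach $1$). The cleanest fix is probably to first prove, by the monotone iteration from $0$, that the \emph{minimal} solution $\mathbf{q}$ satisfies $q_j<1$ for all $j$ (using the generating assumption to get a strict deficit at each step), and only then run the maximum-principle argument above for uniqueness among all $[0,1]^{2d}$-solutions. Strict positivity $q_j>0$ is immediate from $q_j\ge p_j>0$.
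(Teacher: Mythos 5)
Your strategy (monotone iteration from $\mathbf{0}$ to produce the minimal fixed point of $\Phi$, then a comparison/maximum-principle argument for uniqueness) is genuinely different from the paper's, which exploits the pairing $j\leftrightarrow -j$: setting $x=1-\sum_i p_iq_{-i}$, the equations for $j$ and $-j$ force $q_j/p_j=q_{-j}/p_{-j}$ and $p_{-j}q_j^2+xq_j-p_j=0$, so each $q_j$ becomes an explicit function of the single scalar $x$, and existence and uniqueness reduce to a concave one-variable function having a unique zero in $(0,1)$. Your existence step is fine as far as it goes: $\Phi$ preserves $[0,1]^{2d}$, is order-preserving, and the iteration from $\mathbf 0$ converges to the minimal fixed point, which satisfies $q_j\ge p_j>0$ and is dominated by every other fixed point. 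But the two steps you yourself flag as delicate are genuinely incomplete, and the specific estimates you propose for them fail.

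First, the propagation of a uniform bound $q_j^{(n)}\le 1-c$: if all coordinates are $\le 1-c$ you only get $\Phi_j(\mathbf q)\le p_j+(1-c)^2(1-p_j)$, and requiring this to be $\le 1-c$ unwinds to $1-c\ge p_j/(1-p_j)$, which has no solution $c>0$ once some $p_j\ge 1/2$ (the lemma does not assume $\mu$ uniform or symmetric, so this regime occurs). Note also that ``a strict deficit at each step'' proves nothing about the limit: $q_j^{(n)}<1$ for all $n$ is compatible with $\lim_n q_j^{(n)}=1$, and the all-ones vector \emph{is} a fixed point of $\Phi$, so a quantitative argument is unavoidable here. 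Second, the uniqueness step: with $\delta_j=q'_j-q_j\ge 0$ maximized at $j_0$, subtracting the equations gives $\delta_{j_0}\le \delta_{j_0}\bigl(S'_{j_0}+q_{j_0}(1-p_{j_0})\bigr)$ where $S'_{j_0}=\sum_{i\ne j_0}p_iq'_{-i}$; since $q'$ is a fixed point, $S'_{j_0}=1-p_{j_0}/q'_{j_0}$, so the contraction factor is $1-p_{j_0}/q'_{j_0}+q_{j_0}(1-p_{j_0})$ and what you actually need is $q_{j_0}q'_{j_0}(1-p_{j_0})<p_{j_0}$. This is not implied by ``some $q'_{-i}<1$'': the deficit in $S'_{j_0}$ must beat the extra term $q_{j_0}(1-p_{j_0})$, which you do not control. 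The estimate is tight --- for the uniform measure with $q'=\mathbf 1$ the factor equals $\bigl(1-\tfrac1{2d}\bigr)\bigl(1+\tfrac1{2d-1}\bigr)=1$ exactly --- so merely excluding the all-ones solution does not rescue the argument. Both difficulties evaporate in the paper's one-variable reduction, which I would recommend as the cleaner route.
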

\begin{proof}
Let \(x=1-\sum_{i} p_{i}q_{-i}\). Then the equations for \(j,-j\) are equivalent to \(\frac{q_j}{p_j}=\frac{q_{-j}}{p_{-j}}\) and \(p_{-j}q_j^{2}+xq_j-p_j=0\) which yields that \(x\) determines \(q_j\) by:
\[q_j=\frac{-x+\sqrt{x^2+4p_jp_{-j}}}{2p_{-j}}\]
Thus we need to show there exist unique \(x\in(0,1)\) with \(f(x)=0\) where:
\[f(x)=-x+1-\sum_{i}p_i \frac{-x+\sqrt{x^2+4p_{-i}p_{i}}}{2p_{i}}=(d-1)x+1-\sum_{j=1}^{d} \sqrt{x^2+4p_jp_{-j}}\]
Note that \(f(0)\geq0\) (equality for symmetric measures), \(f(1)<0\) and \(f\) is concave with \(f^{\prime}(0)=d-1>0\). Thus \(f\) has a unique zero in this interval.
\end{proof}
Consider the \(\mu\)-random walk \(R_n\) on \(F\). The random walk converges a.e. to an element of \(X\). The hitting measure \(\nu_{\mu}\) is the unique \(\mu\)-stationary measure on \(X\). The Furstenberg-Poisson boundary of \((F,\mu)\) is isomorphic to \((X,\nu_{\mu})\) (see the Theorem in section 7.4 in \cite{kaimanovichhyperbolicproperties}).\\
Before computing the measure \(\nu_{\mu}\), we make the following observations and notation:
\begin{itemize}
    \item The probabilities \(q_j:=\mathbb{P}(\exists n:\:\: R_n=a_j)\) are the numbers of the Lemma \ref{Lemma numbers q for boundary}. Indeed, they are positive and \(<1\) as the random walk is not recurrent. To show they satisfy the equation, we use the Markov property of the random walk. More precisely, we split according to the cases \(R_1=a_i\). The latter has probability \(p_i\). For \(i=j\) we get there exists such \(n\), namely \(n=1\). For \(i\neq j\), we note that in order to get to \(a_j\) we need to get first from \(a_i\) to \(e\), which has probability \(q_{-i}\), and then to \(a_j\), which has probability \(q_j\). This shows \(q_j=p_j+q_j\sum_{i\neq j} p_i q_{-i}\).
    \item
    Define \(v(a_i)=v_i=\nu_{\mu}(X_{a_i})\). We claim that \(\frac{v_i}{1-v_{-i}}=q_i\). Indeed, let us verify \(q_i(1-v_{-i})=v_i\). For this we provide a probabilistic argument: the probability that the limit \(R_\infty\) starts with \(a_{i}\) is the probability that it gets to \(a_i\) for the first time at some finite \(n_0\), and that if we consider the random walk \(R^{\prime}_n=a_i^{-1}R_{n+n_0}\) then \(R^{\prime}_\infty\) doesn't start with \(a_{-i}\). Now, by the Markov property this happens in probability \(q_{i}(1-v_{-i})\), verifying the claim.\\
    From the formulas \(\frac{v_i}{1-v_{-i}}=q_i\) and \(\frac{v_{-i}}{1-v_{i}}=q_{-i}\) we conclude the following direct formula for \(v_i\):
\[v_i=\frac{q_i(1-q_{-i})}{1-q_{-i}q_i}\]
Note that if \(\mu\) is symmetric, then \(p_i,q_i,v_i\) are symmetric and the above reduces to \(v_i=\frac{q_i}{1+q_i}\).
\end{itemize}

\begin{lemma}\label{Lemma harmonic measure formula}
The measure \(\nu_\mu\) is given by:
\[\nu_\mu(X_\gamma)= v(\gamma_1)\prod_{\ell=2}^{m} \frac{v(\gamma_\ell)}{1-v(\gamma_{\ell-1}^{-1})}\quad\quad \gamma=\gamma_1\dots\gamma_m \:\:\text{ reduced form}\]
And 
\begin{equation*}
\frac{da_{i}\nu_{\mu}}{d\nu_{\mu}}=\left\{
\begin{array}{rl}
   q_{-i} & \text{on}\:\:\:\: X\setminus X_{a_i}  \\
\frac{1}{q_i} & \text{on}\:\:\:\: X_{a_i}     
\end{array} \right.
\end{equation*}
\end{lemma}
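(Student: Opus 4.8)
The plan is to treat the two assertions in turn: first determine $\nu_\mu$ on the cylinders $X_\gamma$, and then deduce the Radon--Nikodym cocycle $\frac{da_i\nu_\mu}{d\nu_\mu}$ from it. Throughout I will use that the cylinders $X_\gamma$, together with $\emptyset$, form a $\pi$--system generating the Borel $\sigma$--algebra of $X=\partial F$ and containing $X$, so that a finite Borel measure on $X$ is determined by its values on them.

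For the cylinder formula I would argue probabilistically, in the spirit of the two computations carried out just before the statement. The Cayley graph of $F$ with respect to $a_{\pm1},\dots,a_{\pm d}$ is the $2d$--regular tree, the $\mu$--walk $R_n$ is nearest--neighbour on it, and $R_n\to R_\infty$ almost surely with $R_\infty$ distributed according to $\nu_\mu$. The one non--formal ingredient is the standard fact that a nearest--neighbour path converging to a point $\xi\in\partial F$ passes through every vertex of the geodesic ray $[e,\xi)$: such a vertex $w$ lies on an edge separating $e$ from $\xi$, so a path starting on the $e$--side and eventually on the $\xi$--side must cross that edge. Granting this, for a reduced word $\gamma=\gamma_1\cdots\gamma_m$ with $m\ge2$ and $\tau:=\inf\{n:R_n=\gamma_1\}$, I would check that the event $\{R_\infty\in X_\gamma\}$ coincides with $\{\tau<\infty\}\cap\{R^{(1)}_\infty\in X_{\gamma_2\cdots\gamma_m}\}$, where $R^{(1)}_n:=\gamma_1^{-1}R_{\tau+n}$ and $R^{(1)}_\infty:=\lim_n R^{(1)}_n$: one inclusion uses the geodesic fact, the other uses $R_\infty=\gamma_1 R^{(1)}_\infty$ together with $\gamma_2\ne\gamma_1^{-1}$. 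The strong Markov property at $\tau$ then yields $\nu_\mu(X_\gamma)=q_{\gamma_1}\,\nu_\mu(X_{\gamma_2\cdots\gamma_m})$, using $\mathbb{P}(\tau<\infty)=q_{\gamma_1}$. Iterating down to the base case $\nu_\mu(X_{a_i})=v_i$ and substituting $q_j=v_j/(1-v_{-j})$ (the relation $v_j=q_j(1-v_{-j})$ recorded above) then gives the asserted product formula. An alternative I would keep in reserve is to define a set function on cylinders by that product, verify consistency and $\mu$--stationarity of its cylinder values (both reduce to the equations of Lemma \ref{Lemma numbers q for boundary}), and conclude by uniqueness of the $\mu$--stationary measure on $X$.

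For the cocycle I would propose $\varphi_i:=q_{-i}\mathbf{1}_{X\setminus X_{a_i}}+q_i^{-1}\mathbf{1}_{X_{a_i}}$, which is continuous since $X_{a_i}$ is clopen, and verify $\int_{X_\gamma}\varphi_i\,d\nu_\mu=\nu_\mu(a_i^{-1}X_\gamma)=(a_i\nu_\mu)(X_\gamma)$ for every cylinder $X_\gamma$; by the $\pi$--system remark this identifies $\varphi_i$ with $\frac{da_i\nu_\mu}{d\nu_\mu}$. The content here is the bookkeeping of the left action on cylinders: $a_i^{-1}X_\gamma=X_{a_i^{-1}\gamma}$ with $a_i^{-1}\gamma$ reduced when $\gamma_1\ne a_i$; $a_i^{-1}X_{a_i\gamma'}=X_{\gamma'}$ when $\gamma'\ne e$; $a_i^{-1}X_{a_i}=X\setminus X_{a_i^{-1}}$; and $a_i^{-1}X_e=X_e$. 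Feeding these into the product formula, the ratio $\nu_\mu(a_i^{-1}X_\gamma)/\nu_\mu(X_\gamma)$ telescopes to $v_{-i}/(1-v_i)=q_{-i}$ whenever $\gamma_1\ne a_i$ and to $(1-v_{-i})/v_i=q_i^{-1}$ whenever $\gamma$ begins with $a_i$, while the degenerate case $\gamma=e$ is the identity $q_{-i}(1-v_i)+q_i^{-1}v_i=v_{-i}+(1-v_{-i})=1$; all of this matches $\varphi_i$.

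I expect the only genuine obstacle to be the tree/geodesic visiting property underlying the Markov decomposition of the cylinder events: it is standard, but it is precisely what makes the recursion $\nu_\mu(X_\gamma)=q_{\gamma_1}\nu_\mu(X_{\gamma_2\cdots\gamma_m})$ rigorous. Everything after that is routine manipulation of reduced words together with the single algebraic identity $q_j(1-v_{-j})=v_j$ already recorded before the statement.
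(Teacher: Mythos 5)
Your proposal is correct, but your primary route is genuinely different from the paper's. The paper takes exactly the argument you ``keep in reserve'': it \emph{defines} a Borel measure $\nu$ by the product formula on cylinders (normalization being $\sum_i v_i=1$), computes $(a_i\nu)(X_\gamma)/\nu(X_\gamma)$ case by case to read off the claimed cocycle, then checks $\frac{d\mu\ast\nu}{d\nu}=1$ pointwise using precisely the defining equations $q_j=p_j+q_j\sum_{i\neq j}p_iq_{-i}$ of Lemma \ref{Lemma numbers q for boundary}, and concludes $\nu=\nu_\mu$ by uniqueness of the $\mu$-stationary measure. Your main argument instead derives the cylinder values of $\nu_\mu$ directly from its description as the hitting measure, via the strong Markov property at the hitting time of $\gamma_1$ and the tree separation property, yielding the recursion $\nu_\mu(X_\gamma)=q_{\gamma_1}\nu_\mu(X_{\gamma_2\cdots\gamma_m})$; this is a natural continuation of the probabilistic bullet points the paper records just before the lemma (where $q_j$ and $v_i=q_i(1-v_{-i})$ are identified probabilistically), and it makes the formula \emph{discovered} rather than \emph{verified}. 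The trade-off: your route needs the a.s.\ convergence of the walk, the geodesic-visiting fact, and a clean strong Markov decomposition of the event $\{R_\infty\in X_\gamma\}$, whereas the paper's route is purely algebraic once uniqueness of the stationary measure is granted, and gets the stationarity identity (hence the cocycle) as the central computation rather than as an afterthought. Your treatment of the Radon--Nikodym derivative via the $\pi$-system of cylinders, including the three cases of the left action on cylinders and the degenerate identity $q_{-i}(1-v_i)+q_i^{-1}v_i=1$, matches the paper's computation and is complete. Both arguments are sound; yours is slightly longer in its probabilistic preliminaries but arguably more illuminating about where the product formula comes from.
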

\begin{proof}
Since \(\sum_{i} v_i=\sum_{i} \nu_{\mu}(X_{a_i})=1\) by the definition of \(v_i\), the right hand side yields a Borel probability measure \(\nu\) on \(X\). We will show 
\begin{equation*}
\frac{da_{i}\nu}{d\nu}=\left\{
\begin{array}{rl}
   q_{-i} & \text{on}\:\:\:\: X\setminus X_{a_i}  \\
\frac{1}{q_i} & \text{on}\:\:\:\: X_{a_i}     
\end{array} \right.
\end{equation*}
and conclude \(\nu\) it is \(\mu\)-stationary. By the well known uniqueness of the stationary measure we deduce \(\nu=\nu_{\mu}\).\\
Let \(\gamma=\gamma_1\dots\gamma_m\) be a reduced form. A direct computation yields:
\begin{multline*}
\big(a_i\nu\big)(X_\gamma)=\nu(a_i^{-1}X_{\gamma})=
\left\{
\begin{array}{rl}
    \nu(X_{a_i^{-1}\gamma}) & \gamma_1\neq a_i \\
    \nu(X_{\gamma_2\dots\gamma_m}) & \gamma_1=a_i
\end{array} \right.
=
\left\{
\begin{array}{rl}
    v(a_i^{-1})\frac{v(\gamma_1)}{1-v(a_i)}\prod_{j=2}^{m}\frac{v(\gamma_j)}{1-v(\gamma_{j-1}^{-1})} &  \gamma_1\neq a_i\\
v(\gamma_2)\prod_{j=3}^{m}\frac{v(\gamma_j)}{1-v(\gamma_{j-1}^{-1})}\quad\quad\quad\:\:\: & \gamma_1=a_i
\end{array} \right.\\
= \left\{
\begin{array}{rl}
    \frac{v_{-i}}{1-v_{i}} \cdot v(\gamma_1)\prod_{j=2}^{m}\frac{v(\gamma_j)}{1-v(\gamma_{j-1}^{-1})} &  \gamma_1\neq a_i\\
\frac{1-v_{-i}}{v_i}\cdot v(\gamma_1)\prod_{j=2}^{m}\frac{v(\gamma_j)}{1-v(\gamma_{j-1}^{-1})} & \gamma_1=a_i
\end{array} \right.
=\nu(X_\gamma) \cdot \left\{
\begin{array}{rl}
    q_{-i}  &  \gamma_1\neq a_i\\
\frac{1}{q_i} & \gamma_1=a_i
\end{array} \right.
\end{multline*}
This shows the formula for \(\frac{da_i \nu}{d\nu}\). Now, for \(\xi\in X\) starting with \(a_j\) we conclude that:
\[\frac{d\mu\ast\nu}{d\nu}(\xi)=\sum_{i} p_i\cdot \frac{da_i\nu}{d\nu}(\xi)=\sum_{i\neq j} p_i\cdot q_{-i} +\:\: \frac{p_j}{q_{j}}=\frac{1}{q_j}\bigg(p_j+q_j\sum_{i\neq j} p_i\cdot q_{-i}\bigg)=\frac{q_j}{q_j}=1\]
This shows \(\mu\ast\nu=\nu\) which proves the lemma. 
\end{proof}

\subsection{Measures on the boundary minimizing entropy in their class}\label{subsection Measures on the boundary minimizing in their measure class}
We now turn to finding entropy-minimizing measures on the boundary:

\begin{notation}
\begin{itemize}
    \item Let \(f\) be a strictly convex smooth function on \((0,\infty)\) with \(f(0)=0\). Denote \(\Psi_f(z)=f(z)-zf^{\prime}(z)+f^{\prime}(\frac{1}{z})\).
    \item
    Denote by \(\Delta_d\) the set of generating symmetric probability measures on \(F_d\) supported on \(\{a_i\}_{i=\pm1,\dots,\pm d}\).
\end{itemize}
\end{notation}
Note that \(\Psi_f^\prime(z)=-z\cdot f^{\prime\prime}(z)-\frac{1}{z^{2}}f^{\prime\prime}(z)<0\) and thus \(\Psi_f\) is a decreasing function.

Given \(f\) as above, we define a mapping \(\Delta_d\stackrel{T}{\to}\Delta_d\) as follows.
\begin{definition}\label{definition of T}
Given \(\mu\in\Delta_d\), denote \(p_i=\mu(a_i)\) and let \(q_i\) be as in
Lemma \ref{Lemma numbers q for boundary}. Define \(T(\mu)=\lambda\in\Delta_d\) by:
\[\lambda(a_{\pm j})=\lambda_j=\frac{c}{\Psi_f(q_j)-\Psi_{f}(\frac{1}{q_j})}\]
where \(c\) is a normalization \(c=\big(2\sum_{i=1}^{d}\frac{1}{\Psi_{f}(q_j)-\Psi_f(\frac{1}{q_j})}\big)^{-1}\).
\end{definition}
Since \(\Psi_f\) is decreasing, \(T\) is well defined.
\begin{example}
For \(f(z)=z\ln(z)\) we have \(\Psi(z)=-\ln(z)+1-z\), and \(\Psi(q)-\Psi(q^{-1})=2\ln(q^{-1})+q^{-1}-q\).
\end{example}

The next proposition gives a minimizing measure for the entropy in a specific measure classes on the boundary.
\begin{prop}\label{Proposition minimizing in the measure class}
Let \(f\) be as in the notation. Given \(\mu\in\Delta_d\) let \(\lambda=T(\mu)\).\\
Consider \(X=\partial F\) as a QI \(F\)-space with the measure class of \(\nu_{\mu}\). Then:
\[h_{\lambda,f}(X,\nu_{\mu})=I_{\lambda,f}(X)\]
\end{prop}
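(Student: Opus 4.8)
The strategy is to verify the criterion from Proposition \ref{Proposition f-entropy minimize}: it suffices to show that the function
\[
\Psi_{\lambda,f}(\nu_\mu;\,\cdot\,)=\sum_{i=\pm1,\dots,\pm d}\lambda(a_i)\,\Psi_f\!\left(\frac{da_i\nu_\mu}{d\nu_\mu}\right)
\]
is constant (a.e.) on $X=\partial F$, where I have used the simplified formula for symmetric $\lambda$ from the Example following Proposition \ref{Proposition f-entropy minimize}. Since $(\partial F,\nu_\mu)$ is a BQI $F$-space (it is the Poisson boundary, hence BQI, and one checks a uniform majorant directly from the formula in Lemma \ref{Lemma harmonic measure formula}) and $\lambda$ is finitely supported, the criterion applies verbatim.

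The computation is then purely combinatorial. By Lemma \ref{Lemma harmonic measure formula}, for a point $\xi\in X$ whose first letter is $a_k$, we have $\frac{da_i\nu_\mu}{d\nu_\mu}(\xi)=q_{-i}$ for $i\neq k$ and $\frac{da_k\nu_\mu}{d\nu_\mu}(\xi)=\frac{1}{q_k}$. Hence $\Psi_{\lambda,f}(\nu_\mu;\xi)$ depends only on $k$, and equals
\[
\sum_{i\neq k}\lambda_{|i|}\,\Psi_f(q_{-i})+\lambda_{|k|}\,\Psi_f\!\left(\tfrac{1}{q_k}\right)
=\Big(\sum_{i}\lambda_{|i|}\,\Psi_f(q_{-i})\Big)-\lambda_{|k|}\,\Psi_f(q_k)+\lambda_{|k|}\,\Psi_f\!\left(\tfrac{1}{q_k}\right).
\]
So constancy in $\xi$ is equivalent to $\lambda_{|k|}\big(\Psi_f(q_k)-\Psi_f(\tfrac{1}{q_k})\big)$ being independent of $k$ — and this is exactly the defining relation for $\lambda=T(\mu)$ in Definition \ref{definition of T}, where $\lambda_j$ was chosen proportional to $\big(\Psi_f(q_j)-\Psi_f(\tfrac{1}{q_j})\big)^{-1}$. (One must also note $\Psi_f(q_j)\neq\Psi_f(1/q_j)$, which holds since $0<q_j<1$ and $\Psi_f$ is strictly monotone by the remark $\Psi_f'<0$; this guarantees $T$ is well-defined and the quantity above is genuinely constant, not forced by a degenerate vanishing.) Thus $\Psi_{\lambda,f}(\nu_\mu;\cdot)$ takes the same value on each of the $2d$ cylinders $X_{a_k}$, hence is constant a.e., and Proposition \ref{Proposition f-entropy minimize} yields $h_{\lambda,f}(X,\nu_\mu)=I_{\lambda,f}(X)$.

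**Main obstacle.** There is essentially no deep obstacle; the proof is an orchestration of already-established facts. The one point requiring a little care is confirming the hypotheses of Proposition \ref{Proposition f-entropy minimize} are literally met — namely that $(\partial F,\nu_\mu)$ with the class of $\nu_\mu$ is BQI and that the (smooth, strictly convex) $f$ is in the admissible range — and matching conventions between the two-variable form $\frac{\partial F}{\partial x}(1,z)+\frac{\partial F}{\partial y}(z,1)$ appearing in that proposition and the one-variable $\Psi_f(z)=f(z)-zf'(z)+f'(1/z)$ used in Definition \ref{definition of T}; these agree by the Example after Proposition \ref{Proposition f-entropy minimize}, so it is only a matter of citing it. I would write the proof in three short steps: (1) invoke the BQI property and the symmetric-$\lambda$ form of the criterion; (2) substitute the Radon–Nikodym values from Lemma \ref{Lemma harmonic measure formula} to reduce constancy of $\Psi_{\lambda,f}(\nu_\mu;\cdot)$ to the scalar identity $\lambda_{|k|}(\Psi_f(q_k)-\Psi_f(1/q_k))=\text{const}$; (3) observe this identity is precisely the definition of $T(\mu)$, and conclude.
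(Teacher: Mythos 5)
Your proposal is correct and follows essentially the same route as the paper: both verify the constancy criterion of Proposition \ref{Proposition f-entropy minimize} by substituting the explicit Radon--Nikodym values $q_{-i}$ and $1/q_k$ from Lemma \ref{Lemma harmonic measure formula} into the symmetric-$\lambda$ form $\sum_i\lambda(a_i)\Psi_f(\frac{da_i\nu_\mu}{d\nu_\mu})$, reducing constancy to the identity $\lambda_{|k|}\bigl(\Psi_f(q_k)-\Psi_f(1/q_k)\bigr)=c$, which is exactly the definition of $T$. No gaps.
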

\begin{proof}
Note that since \(\mu\) is symmetric, \(p_i,q_i,v_i,\lambda_i\) are all symmetric.\\
Note that as \(\nu_{\mu}\) is BQI and \(\lambda\) has finite support, the conditions of Proposition \ref{Proposition f-entropy minimize} applies, and \(\nu_{\mu}\) is minimal for the entropy \(h_{\lambda,f}\) in its measure class iff \(\Psi_{\lambda,f}(\nu_{\mu};\cdot )\) is a constant function. We now show \(\Psi_{\lambda,f}(\nu_{\mu};\cdot )\) is constant.
Note that as \(\lambda\) is symmetric
\[\Psi_{\lambda,f}(\nu_{\mu};\cdot)=\sum_{i=\pm1,\dots\pm d} \lambda(a_i)\bigg(\frac{\partial F}{\partial x}(1,\frac{da_{i}\nu_{\mu}}{d\nu_{\mu}})+\frac{\partial F}{\partial y}(\frac{da_{i}\nu_{\mu}}{d\nu_{\mu}},1)\bigg)\]
Where as usual \(F(x,y):=yf(\frac{x}{y})\). Note that \[\frac{\partial F}{\partial x}(1,z)+\frac{\partial F}{\partial y}(z,1)=f(\frac{1}{z})+f(z)-z f^\prime(z)=\Psi_{f}(z)\]
Now take \(\xi\in X\) that starts with \(a_j\):
\[\Psi_{\lambda,f}(\nu_{\mu};\xi)=\sum_{i=\pm1,\dots,\pm d}\lambda(a_i)\Psi_f(\frac{d a_i\nu_{\mu}}{d\nu_{\mu}}(\xi))=C+\lambda(a_j)\cdot\big( \Psi_{f}(\frac{1}{q_j})-\Psi_{f}(q_j)\big)=C+c\]
Where \(C=\sum_{i=\pm1,\dots,\pm d} \lambda(a_i)\Psi_f(q_i)\) is independent of \(j\), and \(c\) is the normalization in the definition of \(T\) (see Definition \ref{definition of T}).
This is independent of \(j\), concluding the proof.
\end{proof}

\begin{remark}
Note that in the infimum \(I_{\lambda,f}(X)\) we only take measures in the measure class of \(\nu_{\mu}\). In a future work, the first named author discusses the problem of minimal entropy over all measures on \(X\), and it will turn out that the infimum remains the same, although this requires considerable more effort.
\end{remark}

We would next like to show that we indeed cover in our analysis \textbf{all} the generating measures \(\lambda\) which are symmetric and supported on the set of free generators and their inverses.

\begin{lemma}\label{Lemma mu to lambda is bijection}
The mapping \(T:\Delta_d\to \Delta_d\) is a bijection.
\end{lemma}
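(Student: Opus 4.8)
The plan is to factor $T$ through the intermediate parameter $q=(q_1,\dots,q_d)\in(0,1)^d$ that Lemma~\ref{Lemma numbers q for boundary} attaches to $\mu$, and to analyze the two resulting maps separately. Throughout I identify $\Delta_d$ with the open simplex $\Delta_d^{\circ}=\{p\in(0,\infty)^d:\ \sum_{j} p_j=\tfrac12\}$ via $\mu\leftrightarrow(p_1,\dots,p_d)$, $p_j=\mu(a_{\pm j})$, and I abbreviate $v(z)=\tfrac{z}{1+z}$.

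First I would show that $S:\mu\mapsto q$ is a bijection of $\Delta_d^{\circ}$ onto $Q_d:=\{q\in(0,1)^d:\ \sum_j v(q_j)=\tfrac12\}$. For symmetric $\mu$ the relation $p_{-j}q_j^2+xq_j-p_j=0$ from the proof of Lemma~\ref{Lemma numbers q for boundary} becomes $p_j=\tfrac{xq_j}{1-q_j^2}$; summing over $j$ and using $\sum_j p_j=\tfrac12$ determines $x=\big(2\sum_j \tfrac{q_j}{1-q_j^2}\big)^{-1}\in(0,1)$, and substituting back into $x=1-\sum_i p_iq_{-i}$ forces exactly $\sum_j v(q_j)=\tfrac12$, so $S$ lands in $Q_d$. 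Conversely, for $q\in Q_d$ I define $x$ and $p_j=\tfrac{xq_j}{1-q_j^2}>0$ by the same formulas (they sum to $\tfrac12$), obtaining a symmetric generating measure $\mu$, and I verify that $(p,q)$ satisfies the equations of Lemma~\ref{Lemma numbers q for boundary} — this is precisely where the identity $\sum_j v(q_j)=\tfrac12$ (already implicit in the computation of $v_i$ before Lemma~\ref{Lemma harmonic measure formula}) is needed — so by the uniqueness clause of that lemma $S(\mu)=q$. Continuity of $S$ and $S^{-1}$ is immediate from the explicit formulas.

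Next I would show that $q\mapsto\lambda$, with $\lambda_j=c(q)/\phi(q_j)$, $\phi(z):=\Psi_f(z)-\Psi_f(1/z)$ and $c(q)=\big(2\sum_j\tfrac1{\phi(q_j)}\big)^{-1}$, restricts to a bijection $Q_d\to\Delta_d^{\circ}$. Since $\Psi_f$ is strictly decreasing, $\phi$ is strictly decreasing on $(0,1)$ with $\phi(1)=0$, hence a decreasing bijection of $(0,1)$ onto $(0,\Phi_0)$ with $\Phi_0:=\phi(0^+)\in(0,\infty]$; in particular $\phi>0$ on $(0,1)$, so $T$ is well-defined. For injectivity: if $q,q'\in Q_d$ give the same $\lambda$, comparing the ratios $\lambda_i/\lambda_j$ gives $\phi(q_j')=r\,\phi(q_j)$ for all $j$ with one constant $r>0$; $r>1$ would force $q_j'<q_j$ for all $j$, hence $\sum_j v(q_j')<\sum_j v(q_j)$, contradicting that both equal $\tfrac12$, and $r<1$ is symmetric, so $r=1$ and $q=q'$. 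For surjectivity: given $\lambda\in\Delta_d^{\circ}$, a preimage amounts to solving $\phi(q_j)=t/\lambda_j$ for a common $t>0$ in the range where all arguments lie in $(0,\Phi_0)$ — one then checks $t=c(q)$ automatically, using $\sum_j\lambda_j=\tfrac12$ — i.e.\ $q_j=\phi^{-1}(t/\lambda_j)$; the function $G(t):=\sum_j v(\phi^{-1}(t/\lambda_j))$ is continuous and strictly decreasing, with $G(t)\to\tfrac d2>\tfrac12$ as $t\to0^+$ (here $d\ge2$ enters) and $G$ dropping below $\tfrac12$ at the upper end of its interval, so the intermediate value theorem gives a unique admissible $t$, hence a unique $q\in Q_d$ mapping to $\lambda$. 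Composing the two bijections shows $T$ is a bijection.

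I expect the surjectivity statement in the second step — making precise that the monotone $G$ genuinely crosses $\tfrac12$ — to be the main obstacle, since it depends on the boundary behavior of $\phi$ (the limits of $\phi^{-1}(t/\lambda_j)$ as $t$ runs to the ends of its admissible interval, which involves how fast $\phi$ grows near $0$) together with the simplex constraint $d\ge2$. By contrast, the $\mu\leftrightarrow q$ dictionary and the injectivity are essentially formal once the defining identity $\sum_j v(q_j)=\tfrac12$ of $Q_d$ has been isolated.
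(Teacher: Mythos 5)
Your factorization through the parameter $q$ is sound, and its first half is actually more explicit than the paper's argument: the identity $p_j=xq_j/(1-q_j^2)$ with $x=\bigl(2\sum_j q_j/(1-q_j^2)\bigr)^{-1}$, together with the derived constraint $\sum_j v(q_j)=\tfrac12$, gives a genuine two-sided inverse for $\mu\mapsto q$ (the converse direction being legitimized by the uniqueness clause of Lemma \ref{Lemma numbers q for boundary}), whereas the paper only proves injectivity of this map, via an inductive determinant computation showing that the linear system expressing $1$ in terms of the $p_j$ is non-singular. Your injectivity argument for $q\mapsto\lambda$ is essentially the paper's. The genuine divergence is in surjectivity: the paper extends $T$ continuously to $\overline{\Delta_d}$ (interpreting faces as $\Delta_r$, $r<d$), deduces that the image is closed in $\Delta_d$, and combines this with invariance of domain; you instead shoot directly along the one-parameter family $q_j=\phi^{-1}(t/\lambda_j)$ and invoke the intermediate value theorem.

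The obstacle you flag at the upper end of the $t$-interval is a real gap, and it closes exactly when $\Phi_0=\phi(0^+)=\lim_{z\to0^+}\bigl(\Psi_f(z)-\Psi_f(1/z)\bigr)=+\infty$: then $t_{\max}=\infty$, each $q_j(t)\to0$ as $t\to\infty$, so $G(t)\to0<\tfrac12$ and your argument finishes. This covers $f=-\ln$, where $\phi(z)=-2\ln z+z^{-1}-z$. For general strictly convex smooth $f$ with $f(1)=0$, however, the statement itself fails: take $f(z)=\sqrt{1+z^2}-\sqrt2$, so that $\Psi_f(z)=2(1+z^2)^{-1/2}-\sqrt2$ is bounded and $\phi(z)=2(1-z)/\sqrt{1+z^2}$ has $\Phi_0=2$. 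For $d=3$ and $\lambda=\bigl(\epsilon,\tfrac{1/2-\epsilon}{2},\tfrac{1/2-\epsilon}{2}\bigr)$ with $\epsilon$ small, any preimage would satisfy $\phi(q_2)=\phi(q_1)\lambda_1/\lambda_2\le 2\Phi_0\epsilon/(\tfrac12-\epsilon)$, forcing $q_2,q_3$ near $1$ and hence $v(q_2)+v(q_3)$ near $1$, contradicting $\sum_j v(q_j)=\tfrac12$; so this $\lambda$ has no preimage. Thus the gap cannot be closed without a hypothesis on $f$. You are in good company: the paper's own proof silently needs the same condition, since when $\Phi_0<\infty$ the asserted boundary extension of $T$ is not continuous (as $p_1\to0$ one gets $q_1\to0$ but $\lambda_1\to(1/\Phi_0)\big/\bigl(2\sum_j 1/\phi(q_j)\bigr)>0$, so images of interior points accumulate at interior points lying over the boundary). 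Under the hypothesis $\Phi_0=\infty$ your route is complete and arguably cleaner, as it avoids invariance of domain altogether.
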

\begin{proof}
We first extend \(T\) to the boundary of the simplex \(\Delta_d\). This can be done by interpreting the boundary as elements of \(\Delta_{r},\:r<d\) and taking the corresponding \(T\) there. This yields a continuous map \(T:\overline{\Delta_d}\to\overline{\Delta_d}\) that sends the boundary to itself. This implies that \(T(\Delta_d)=\Delta_d\cap T(\overline{\Delta_d})\subset \Delta_d\) is relatively closed.\\
We wish to show that the image is also open, which would then imply \(T\) is surjective. By invariance of domain (cf. Theorem 2B.3 in \cite{Hatcher}) it is enough to show that \(T\) is injective. \\
In order to show injectivity of \(T\), we first show that \(\mu=(p_i)\mapsto (q_i)\) is injective, and then that \((q_i)\mapsto\lambda\) is injective.\\
For the first mapping, from \((q_i)\) one can reconstruct \((p_i)\), as the defining equation is:
\[\begin{pmatrix}
\frac{1}{q_1}+q_1 & 2q_2 & 2q_3 & \dots & 2q_d \\
2q_1 & \frac{1}{q_2}+q_2 & 2q_3 & \dots & 2q_d \\
& & \vdots\\
2q_1 & 2q_2 & 2q_3 &\dots & \frac{1}{q_d}+q_d
\end{pmatrix} 
\begin{pmatrix}
p_1 \\
p_2 \\
\vdots\\
p_d
\end{pmatrix}=
\begin{pmatrix}
1 \\
1 \\
\vdots\\
1
\end{pmatrix}\]
So we will show the matrix above is non-singular. Employing row operations, this matrix has the same determinant as the matrix:
\[\begin{pmatrix}
\frac{1}{q_1}+q_1 & 2q_2 & 2q_3 & \dots & 2q_d \\
q_1-\frac{1}{q_1} & \frac{1}{q_2}-q_2 & 0 & \dots & 0 \\
\vdots & \vdots & \vdots &\dots & \vdots\\
q_1-\frac{1}{q_1} & 0 & 0 &\dots & \frac{1}{q_d}-q_d
\end{pmatrix} \]
We show that the determinant of the latter matrix is positive by induction. Indeed, expanding using the second row, we get determinant:
\[-(q_1-\frac{1}{q_1})2q_2(\frac{1}{q_3}-q_3)\dots(\frac{1}{q_d}-q_d)+(\frac{1}{q_2}-q_2)\det\begin{pmatrix}
\frac{1}{q_1}+q_1 & 2q_3 & \dots & 2q_d \\
q_1-\frac{1}{q_1} & \frac{1}{q_3}-q_3  & \dots & 0\\
& \vdots  &\\
q_1-\frac{1}{q_1}  & 0 &\dots & \frac{1}{q_d}-q_d
\end{pmatrix}\]
The second term is the same determinant but for less variables and since \(0<q_i<1\) the first term is \(>0\), by the induction assumption the second term is positive. So we need to show only the base case, and indeed 
\[\det\begin{pmatrix}
\frac{1}{q_1}+q_1 & 2q_2 \\
q_1-\frac{1}{q_1} & \frac{1}{q_2}-q_2 
\end{pmatrix}
=(\frac{1}{q_1}+q_1)(\frac{1}{q_2}-q_2)+2q_2(\frac{1}{q_1}-q_1)>0\]
Thus \(\mu\mapsto (q_i)\) is injective. Let us show that the mapping \((q_i)\mapsto \lambda\) is injective.\\ 
Consider \(\varphi(z)=\Psi_f(z)-\Psi_f(\frac{1}{z})\) this is a positive decreasing function (as \(\Psi_f\) is decreasing). Note that from \(\lambda\) we can reconstruct \(\frac{\varphi(q_i)}{\varphi(q_j)}=\frac{\lambda_j}{\lambda_i}\) for each \(i,j\). Assume that \(\lambda\) coresponds to distinct \((q_j)\neq(q^{\prime}_j)\). Without loss of generality, there is some \(j_0\) with \(q_{j_0}>q^{\prime}_{j_0}\). As \(v(z)=\frac{z}{1+z}\) is an increasing function and \((v(q_j)),(v(q^{\prime}_j))\) are probability measures (see the discussion before \ref{Lemma harmonic measure formula}), we conclude that there must be \(j_1\) with \(q_{j_1}<q_{j_1}^{\prime}\). But then \(\frac{\varphi(q_{j_0})}{\varphi(q_{j_1})}<\frac{\varphi(q^{\prime}_{j_0})}{\varphi(q^{\prime}_{j_1})}\), which is a contradiction.
\end{proof}

\begin{remark}
Note that the proof of Proposition \ref{Proposition minimizing in the measure class} yields that for \(\lambda\neq T(\mu)\) the measure \(\nu_{\mu}\) is not minimizing in its measure class for \(h_{\lambda,f}\).\\
It is not difficult to see that for \(f(t)=t\ln(t)\), the map \(T\) is not the identity (see the following example). This means that the \(\mu\)-entropy minimizer on the boundary \(X\) is, in general, \textbf{not} the \(\mu\)-stationary measure.
\end{remark}
\begin{example}
Let \(d=2\) and \(f(t)=t\ln(t)\). Consider \(\mu(a_{\pm1})=\frac{1}{3}, \mu(a_{\pm2})=\frac{1}{6}\). A direct computation (using computer and the proof of Lemma \ref{Lemma numbers q for boundary}) shows that \(\lambda=T(\mu)\) has \(\lambda(a_{\pm1})\sim 0.32378\) and in particular \(\lambda\neq\mu\). Further computation (using Lemma \ref{Lemma harmonic measure formula}) yields \(h_{\lambda}(\nu_{\mu})\sim 0.5126\).
\end{example}

\subsection{Minimizing entropy for the action of F on itself}\label{subsection Entropy minimize of the action of F on itself}
In the following theorem, we compute the minimal entropy number \(I_{\lambda,f}\) for the action of the free group \(F\) on itself. Note the next theorem includes Theorem \ref{Theorem C}:
\begin{theorem}\label{Theorem entropy for F action on F}
Let \(f\) be a strictly convex smooth function on \((0,\infty)\).\\
Let \(\lambda\in\Delta_d\) and let \(\mu=T^{-1}(\lambda)\in\Delta_d\). Then:
\[I_{\lambda,f}(F)=h_{\lambda,f}(\partial F,\nu_{\mu})\]
\end{theorem}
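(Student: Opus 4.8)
The plan is to prove the two matching inequalities $I_{\lambda,f}(F)\ge h_{\lambda,f}(\partial F,\nu_\mu)$ and $I_{\lambda,f}(F)\le h_{\lambda,f}(\partial F,\nu_\mu)$. Throughout we may normalise $f$ so that $f(1)=0$, since $D_f$ — and hence $h_{\lambda,f}$ and $I_{\lambda,f}$ — is unchanged when $f$ is modified by an affine function vanishing at $1$. Two bookkeeping points should be recorded first: $\mu=T^{-1}(\lambda)$ is well-defined and lies in $\Delta_d$ by Lemma \ref{Lemma mu to lambda is bijection}; and the harmonic measure $\nu_\mu$ on $\partial F$ of Lemma \ref{Lemma harmonic measure formula} realises the Poisson boundary $\mathcal{B}(F,\mu)$, so the two occurrences of ``$\nu_\mu$'' below refer to the same object. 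Observe also that $\nu_\mu$ is BQI: by Lemma \ref{Lemma harmonic measure formula} its RN-cocycle takes the explicit bounded values $q_{-i}$ and $1/q_i$.

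For the lower bound, view $X=\partial F$ as a QI $F$-space equipped with the measure class of $\nu_\mu$. By Corollary \ref{Cor Group-Space-Entropy} one has $I_{\lambda,f}(F)\ge I_{\lambda,f}(X)$. Since $\lambda=T(\mu)$ and $\nu_\mu$ is BQI while $\lambda$ is finitely supported, Proposition \ref{Proposition minimizing in the measure class} applies and gives $I_{\lambda,f}(X)=h_{\lambda,f}(\partial F,\nu_\mu)$. Chaining the two yields $I_{\lambda,f}(F)\ge h_{\lambda,f}(\partial F,\nu_\mu)$.

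For the upper bound, set $\mu_a=(1-a)\sum_{n\ge 0}a^n\mu^{*n}$. For every $a\in(\tfrac12,1)$ the pair $(F,\mu_a)$ is a QI $F$-space in the full measure class (indeed $\mu_a$ is fully supported, as $\mu$ is generating), so by the definition of $I_{\lambda,f}$ we get $I_{\lambda,f}(F)\le h_{\lambda,f}(F,\mu_a)$; this quantity is finite because $\mu_a$ is uniformly BQI by Lemma \ref{Lemma properties of abel measures}, so each $\frac{dg\mu_a}{d\mu_a}$ is bounded above and bounded away from $0$ and $f$ composed with it is bounded. Since $\lambda$ is symmetric and finitely supported, $h_{\lambda,f}(F,\mu_a)=\sum_{g}\lambda(g)\sum_{x\in F}f\!\left(\frac{\mu_a(g^{-1}x)}{\mu_a(x)}\right)\mu_a(x)$, a finite sum over $g\in\mathrm{supp}(\lambda)$ whose inner sums are exactly the left-hand sides of Corollary \ref{Corollary RN parameters on group and boundary}. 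Applying that corollary term by term and interchanging the finite $\lambda$-sum with $\lim_{a\to 1^-}$ gives $\lim_{a\to1^-}h_{\lambda,f}(F,\mu_a)=\sum_g\lambda(g)\int_{\partial F}f\!\left(\frac{dg\nu_\mu}{d\nu_\mu}\right)d\nu_\mu=h_{\lambda,f}(\partial F,\nu_\mu)$. Hence $I_{\lambda,f}(F)\le\lim_{a\to1^-}h_{\lambda,f}(F,\mu_a)=h_{\lambda,f}(\partial F,\nu_\mu)$, and together with the lower bound this completes the proof.

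I do not expect a single hard step: the theorem is the confluence of (i) the minimisation criterion on the boundary (Proposition \ref{Proposition minimizing in the measure class}), which is tuned precisely so that the $\lambda$-minimiser is $\nu_\mu$ with $\mu=T^{-1}(\lambda)$, and (ii) the ultralimit presentation of the Poisson boundary through Corollary \ref{Corollary RN parameters on group and boundary}. The only delicate points are the identifications flagged above — that $\mu=T^{-1}(\lambda)$ makes sense, that $\nu_\mu$ is simultaneously the entropy-minimiser in its class and the harmonic measure — and checking that finiteness of $h_{\lambda,f}(F,\mu_a)$ together with finiteness of $\mathrm{supp}(\lambda)$ legitimises the interchange of summation and limit.
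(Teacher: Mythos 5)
Your proposal is correct and follows essentially the same route as the paper's proof: the lower bound via Corollary \ref{Cor Group-Space-Entropy} combined with Proposition \ref{Proposition minimizing in the measure class}, and the upper bound via Corollary \ref{Corollary RN parameters on group and boundary} applied to the Abel measures \(\mu_a\). The extra bookkeeping you supply (well-definedness of \(T^{-1}\), BQI-ness of \(\nu_\mu\), and the interchange of the finite \(\lambda\)-sum with the limit) is all sound and merely makes explicit what the paper leaves implicit.
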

\begin{proof}
Consider \(X=\partial F\) as a QI \(G\)-space with the measure class of \(\nu_{\mu}\), then by Corollary \ref{Cor Group-Space-Entropy} we have
\(I_{\lambda,f}(F)\geq I_{\lambda,f}(X)\). By Proposition \ref{Proposition minimizing in the measure class} we have \(I_{\lambda,f}(X)=h_{\lambda,f}(X,\nu_{\mu})\). \\
On the other hand, by Corollary \ref{Corollary RN parameters on group and boundary} (the corollary of the main result):
\[h_{\lambda,f}(X,\nu_{\mu})=\lim_{a\to 1^{-}} h_{\lambda,f}(F,\mu_a)\geq I_{\lambda,f}(F)\]
This finishes the proof.
\end{proof}
As for Theorem \ref{Theorem B} and the more general Theorem \ref{Theorem D}:
\begin{corollary}\label{Corollary case of uniform measure}
Let \(\mu\) be the uniform symmetric measure \(\mu(a_{\pm j})=\frac{1}{2d}\). Then for any strictly convex and smooth \(f\) we have:
\[I_{\mu,f}(F_d)=\frac{2d-1}{2d} f(\frac{1}{2d-1})+\frac{1}{2d}f(2d-1)\]
In particular, taking \(f(t)=t\ln(t)\) we conclude:
\[I_{\mu,KL}(F_d)=\frac{d-1}{d}\ln(2d-1)\]
\end{corollary}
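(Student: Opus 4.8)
The plan is to apply Theorem \ref{Theorem entropy for F action on F} directly with $\lambda=\mu$ the uniform symmetric measure, so that $I_{\mu,f}(F)=h_{\mu,f}(\partial F,\nu_{\mu'})$ where $\mu'=T^{-1}(\mu)$. The first and only not-quite-automatic step is to identify $\mu'$, and the claim is that $T$ fixes the uniform measure, i.e. $\mu'=\mu$. This is read off from Definition \ref{definition of T}: when all $p_i$ are equal, the defining system of Lemma \ref{Lemma numbers q for boundary} is symmetric under permuting the generators and flipping signs, so by the uniqueness there all the $q_i$ coincide with one value $q$; then $T(\mu)(a_{\pm j})=c/(\Psi_f(q)-\Psi_f(1/q))$ is independent of $j$, hence $T(\mu)$ is again the uniform measure. (Alternatively one invokes the bijectivity in Lemma \ref{Lemma mu to lambda is bijection} together with the evident $S_d$-equivariance of $T$.) Thus $I_{\mu,f}(F)=h_{\mu,f}(\partial F,\nu_\mu)$.

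Next I would pin down the numbers. The equation of Lemma \ref{Lemma numbers q for boundary} becomes $q=\tfrac{1}{2d}+\tfrac{2d-1}{2d}q^{2}$, i.e. $(2d-1)q^{2}-2dq+1=0$, with roots $1$ and $\tfrac{1}{2d-1}$; since $0<q<1$ we get $q=\tfrac{1}{2d-1}$. Consequently, by the symmetric-case formula recorded before Lemma \ref{Lemma harmonic measure formula}, $v_i=\nu_\mu(X_{a_i})=\tfrac{q}{1+q}=\tfrac{1}{2d}$ for every $i$, and Lemma \ref{Lemma harmonic measure formula} gives the Radon--Nikodym cocycle $\tfrac{da_{-i}\nu_\mu}{d\nu_\mu}=q=\tfrac{1}{2d-1}$ on $X\setminus X_{a_{-i}}$ and $=\tfrac{1}{q}=2d-1$ on the clopen set $X_{a_{-i}}$.

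Finally I would assemble the entropy. Since $a_i^{-1}=a_{-i}$, each summand satisfies
\[
D_f(a_i^{-1}\nu_\mu\,\|\,\nu_\mu)=\intop_X f\Big(\tfrac{da_{-i}\nu_\mu}{d\nu_\mu}\Big)\,d\nu_\mu
= v_{-i}\,f(2d-1)+(1-v_{-i})\,f\Big(\tfrac{1}{2d-1}\Big)
= \tfrac{1}{2d}f(2d-1)+\tfrac{2d-1}{2d}f\Big(\tfrac{1}{2d-1}\Big),
\]
independently of $i$; averaging with the weights $\mu(a_i)=\tfrac{1}{2d}$ over the $2d$ indices leaves this value unchanged, which is exactly the asserted formula for $I_{\mu,f}(F)$. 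Specializing $f=-\ln$ (strictly convex and smooth on $(0,\infty)$, so Theorem \ref{Theorem entropy for F action on F} applies) collapses the right-hand side to $\tfrac{2d-1}{2d}\ln(2d-1)-\tfrac{1}{2d}\ln(2d-1)=\tfrac{d-1}{d}\ln(2d-1)$.

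There is no real obstacle here beyond bookkeeping once Theorem \ref{Theorem entropy for F action on F} is available; the one point deserving a sentence of care is the verification that $T^{-1}$ of the uniform measure is again uniform --- this is precisely the degenerate instance in which $T$ happens to be the identity, in contrast to the general behaviour noted in the Remark after Proposition \ref{Proposition minimizing in the measure class}, so that the harmonic measure one integrates against is the $\mu$-stationary one. It is also worth observing in passing that the stated expression is insensitive to the normalization $f(1)=0$, since $\tfrac{2d-1}{2d}\cdot\tfrac{1}{2d-1}+\tfrac{1}{2d}\cdot(2d-1)=1$, consistent with the invariance of $D_f$ under adding to $f$ a linear function vanishing at $1$.
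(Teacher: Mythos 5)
Your proposal is correct and follows essentially the same route as the paper: identify that $T$ fixes the uniform measure by symmetry, compute $q=\tfrac{1}{2d-1}$ and $v_i=\tfrac{1}{2d}$, and evaluate the entropy on the boundary via Lemma \ref{Lemma harmonic measure formula} and Theorem \ref{Theorem entropy for F action on F}. The only (immaterial) difference is that you extract $q$ from the quadratic of Lemma \ref{Lemma numbers q for boundary} directly, whereas the paper reads it off from $v_i=\tfrac{1}{2d}$.
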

\begin{proof}
It is clear that \(v_i,q_i\) are independent of \(i\) and thus \(v_i=\frac{1}{2d},\lambda=T(\mu)=\mu\).\\
Hence, \(q_i=\frac{\frac{1}{2d}}{1-\frac{1}{2d}}=\frac{1}{2d-1}\). Thus by Theorem \ref{Theorem entropy for F action on F}:
\[I_{\mu,f}(F_d)=h_{\lambda,f}(\nu_{\mu})=\sum_{i}\frac{1}{2d}\intop_{\partial F_d}f(\frac{da_i\nu_{\mu}}{d\nu_{\mu}})d\nu_{\mu}=\frac{2d-1}{2d} f(\frac{1}{2d-1})+\frac{1}{2d}f(2d-1)\]
In particular:
\(I_{\mu,KL}(F_d)
=\frac{d-1}{d}\ln(2d-1)\).
\end{proof}

\section{Final remarks}

\subsection{Possible relation with the Liouville problem}\label{subsection Possible relation with the Liouville problem}
\subsubsection{Entropy and stationary measures}
One initial motivation for this work was an attempt to present stationary measures as information theoretic minimizers.
Using the approach and results established in this paper, where one approximates the entropy of all boundaries by the self action of the group, it is not difficult to see that this would have immediate implications to the fundamental
open problem of (in)dependence of the Liouville property on the (symmetric, finitely supported) generating measure chosen on the group.
A natural approach towards the minimization property would be to show that convolution with \(\lambda\) might reduce the \(\lambda\)-entropy. 
As it turns out, this is usually not true. Indeed, for the action \(F_2\curvearrowright\partial F_2\) and \(\lambda \) a non uniform measure on the free generators, the \(\lambda\)-harmonic measure is not in general a minimizing measure for the \(\lambda\)-entropy. Moreover, there is a minimum (in a measure class) for the entropy which is the \(\mu\)-harmonic measure for a measure \(\mu\neq\lambda\). 
In fact, let us explain why in general no information theoretic entropy can be used to detect stationary measures as minimizers.\\
Let \(h=h_{\kappa,f}\) (with \(\kappa\) finitely supported) be some entropy for an amenable group \(G\) with a non-Liouville measure \(\mu\) (e.g. \(G=\mathbb{Z}/2\:\wr\:\mathbb{Z}^{3}\)). We explain why convolution by \(\mu\) can not always decrease the entropy \(h\).\\
Indeed, take \((B,\nu)\) to be (a topological model of) the Furstenberg-Poisson boundary. We know \(0=I_{\kappa,f}(B)\) and hence there is \(m\in M(B)\) (in the measure class) with \(h(m)<h(\nu)\). Observe that \(\mu^{*n}\ast m\) converges weakly to \(\nu\).
Indeed, consider \(\Omega=G^{\mathbb{N}}\) with the \(\mu\)-random \((X_n)\) and its probability measure \(\mathbb{P}\). Note that for the stationary \(\sigma\)-algebra \(\mathcal{S}\) (see subsection 0.3 of \cite{BoundaryEntropy}) one has that \((B,\nu)\) is a topological model of \((\Omega,\mathcal{S},\mathbb{P})\). Suppose \(h=\frac{dm}{d\nu}\), then for any \(a\in L^{\infty}(B)=L^{\infty}(\Omega)\) we have:
\begin{multline*}
(\mu^{*n}\ast m)(a)=\mathbb{E}_{\mathbb{P}}\Big[(X_n\cdot m)(a)\Big]=\mathbb{E}_{\mathbb{P}}\Big[\intop_{B}a(X_n\cdot x)h(x)d\nu(x)\Big]=\intop_{B}\mathbb{E}_{\mathbb{P}}\big[a(X_n\cdot x)\big]h(x)d\nu(x)
\end{multline*}
Note that (a.e.) \(a(X_{n}(\omega)\cdot x)\to a(\omega)\). Thus \(\mathbb{E}_{\mathbb{P}}\big[a(X_n\cdot x)\big]\to \nu(a)\) for a.e \(x\in B\) which implies that \((\mu^{*n}\ast m)(a)\to \nu(a)\) which shows weak convergence.\\
Since \(h\) is lower semi-continuous w.r.t weak topology (follows from Theorem 2.34 in \cite{gariepy2001functions} which shows \(f\)-divergence are such) we conclude that \(\liminf_{n} h(\mu^{*n}\ast m)\geq h(\nu)>h(m)\), showing that convolution by \(\mu\) cannot always decrease entropy.


\subsubsection{Other versions of almost-invariant measures}

In view of Theorem \ref{Theorem application for amenability}, one can ask when there are \(\ell^\infty\)-almost-invariant measures on a group. 
That is, when for every \(S\subset G\) finite and \(\epsilon>0\), there is a measure \(\eta\) on \(G\) such that \(||\frac{ds\eta}{d\eta}-1||_{\infty}\leq \epsilon\) for all \(s\in S\).
It is easy to see that for finitely generated groups this is equivalent to having sub-exponential growth.\\
\(\)\\
Recall that we say that a measure \(\kappa\) on a finitely generated group \(G\) is \emph{SAS} if it is symmetric, generating and satisfies \(\sum \kappa(g)e^{\epsilon|g|}<\infty\) for some \(\epsilon>0\) and word metric \(|\cdot|\).
\begin{definition}
A finitely generated group is \emph{tame} if there is a sequence \((\kappa_n)\) of SAS measures that is KL-almost invariant.
\end{definition}
Corollary \ref{corollary Liouville group posses SAS almost invariant measures} implies that groups with a symmetric finitely supported Liouville measure are tame. We find the following problem intriguing:
\begin{problem}
Is there a finitely generated amenable group which is not tame?\\
Are the lamplighters \(\mathbb{Z}/2\:\wr\:\mathbb{Z}^{d},\: d\geq 3\) tame?
\end{problem}
Our hope is that the above lamplighters are not tame. One can then make the speculation that being tame is related to the  
\enquote{Liouville dependence problem} concerning boundary triviality of symmetric finitely supported measures.
Our proof of the fixed point Theorem \ref{Theorem fixed point for stationary actions} came as an attempt to employ these ideas, However eventually it produces a fixed point without the SAS assumption.




\subsection{Uniqueness of entropy minimizers}\label{subsection Entropy on other actions}
As explained in the introduction, for some actions \(G\curvearrowright X\), the infimum of the entropy function is attained.
The f-divergence is lower semi-continuous with respect to the weak topology\footnote{see Theorem 2.34 in \cite{gariepy2001functions}, there is also a proof using a variational representation of \(f\)-divergences.}. Thus if \(X\) is a compact \(G\)-space, the entropy function \(h_{\lambda,f}: M(X)\to [0,\infty]\) is lower semi-continuous.
Hence the entropy \(h_{\lambda,f}\) attains its infimum \(I_{\lambda,f}^{top}(X)\). It is then very natural to inquire about its uniqueness.
While as we have seen (Lemma \ref{Lemma uniqueness of entropy minimize}), inside a specific ergodic measure class, uniqueness holds under natural conditions on \(f\), we have no criterion for uniqueness in the topological setting, nor can we establish any non trivial uniqueness result. \\
Another problem is finding a criterion for a measure \(\nu\) to be a minimizer of entropy. Inside a specific measure class we have Proposition \ref{Proposition minimizing in the measure class} yielding a complete solution, in the topological case however, we do not have any criterion.\\
In the sequel paper \cite{ultrAmenable} we will show that for the topological action \(F_d\curvearrowright\partial F_d\) we have under the notations of Proposition \ref{Proposition minimizing in the measure class} that \(I_{\lambda,f}^{top}(\partial F_d)=h_{\lambda,f}(\partial F_d,\nu_{\mu})\).





\printbibliography

\end{document}